\newtheorem{thmcounter}{Dummy}[section]
\theoremstyle{definition}
\newtheorem{definition}[thmcounter]{Definition}
\newtheorem{construction}[thmcounter]{Construction}
\Crefname{construction}{Construction}{Constructions}
\theoremstyle{plain}
\newtheorem{axiom}{Axiom}[section]
\Crefname{axiom}{Axiom}{Axioms}
\newtheorem{task}{Task}
\Crefname{task}{Task}{Tasks}
\newtheorem{proposition}[thmcounter]{Proposition}
\newtheorem{theorem}[thmcounter]{Theorem}
\newtheorem{lemma}[thmcounter]{Lemma}
\newtheorem{corollary}[thmcounter]{Corollary}
\newtheorem{problem}[thmcounter]{Problem}
\theoremstyle{remark}
\newtheorem{remark}[thmcounter]{Remark}
\newtheorem{example}[thmcounter]{Example}
\newtheorem{warning}[thmcounter]{Warning}
\title{Normalization and coherence for $\infty$-type theories}
\author{Taichi Uemura}
\begin{document}

\maketitle

\begin{abstract}
  We develop a technique for normalization for $\infty$-type theories. The
normalization property helps us to prove a coherence theorem: the
initial model of a given $\infty$-type theory is $0$-truncated. The
coherence theorem justifies interpreting an ordinary type theory in
$(\infty, 1)$-categorical structures.

\end{abstract}

\section{Introduction}
\label{sec:introduction}

\emph{\(\infty\)-type theories} \parencite{nguyen2022type-arxiv} are an
\(\infty\)-dimensional generalization of dependent type theories and
introduced by \citeauthor{nguyen2022type-arxiv} to tackle
\emph{coherence problems} in the \((\infty, 1)\)-categorical semantics of
type theories. Motivated by the development of homotopy type theory
\parencite{hottbook}, certain type theories are conjectured to admit
interpretations in structured \((\infty, 1)\)-categories
\parencite{kapulkin2018homotopy,kapulkin2017locally}, but one
immediately faces a coherence problem, namely a mismatch between
strictness levels of equality. Strict equality in type theories has to
be interpreted as a much weaker notion of equality, homotopies, in
\((\infty, 1)\)-categories. \(\infty\)-type theories are kind of type theories
in which equations are up to homotopy so that there is no coherence
problem between \(\infty\)-type theories and structured
\((\infty, 1)\)-categories. The strategy for the coherence problem between
a type theory and an \((\infty, 1)\)-categorical structure is to introduce
an intermediate \(\infty\)-type theory and reduce the problem to a
coherence problem between the type theory and the \(\infty\)-type
theory. Then the coherence problem is essentially whether the initial
model (and free models, more generally) of the \(\infty\)-type theory is
\(0\)-truncated. However, a solution to the coherence problem is only
given for a specific type theory \parencite{nguyen2022type-arxiv}.

\emph{Normalization} for a type theory is the property that every type
or term in the type theory has a unique \emph{normal form}. Because
the normal forms are inductively defined without quotiented out by any
relation, normalization provides a way to calculate equality of types
and terms. A consequence is for example the decidability of equality
of types and terms.

In this paper, we develop a technique for \emph{normalization for
  \(\infty\)-type theories} as a solution to coherence problems at some
level of generality. Here, the unique existence of normal forms is
understood in a higher dimensional sense: the space of normal forms of
every type or term is contractible. Normalization helps us to
calculate the path spaces of the initial model of an \(\infty\)-type theory
and to determine the truncation level of the initial model. The
difficulty is that \(\infty\)-type theories are \emph{both} higher
dimensional and dependently typed.

Normalization for a \(2\)-dimensional simple type theory is proved by
\textcite{fiore2020coherence}. They also derive from it a coherence
theorem for cartesian closed bicategories. Their proof is purely
categorical and seems to scale to the \(\infty\)-dimensional setting, but
it does not work directly for dependent type theories. For a simple
type theory, only terms are normalized. Normalization for a dependent
type theory is more complicated because types and terms are
simultaneously normalized.

A successful approach to normalization for \(1\)-dimensional dependent
type theories is \citeauthor{sterling2021thesis}'s \emph{synthetic
  Tait computability}
\parencite{sterling2021thesis,sterling2021normalization,gratzer2021normalization}. It
is a technique of constructing \emph{logical relations} using an
internal language of a category obtained by the \emph{Artin
  gluing}. We adapt it to the higher dimensional setting using
\emph{homotopy type theory} \parencite{hottbook} as an internal
language of an \((\infty, 1)\)-category obtained by the Artin gluing.

The logical relation constructed using synthetic Tait computability
yields a \emph{normalization function} that assigns a normal form to
each type or term. The \emph{uniqueness} of normal forms is, however,
proved by induction \emph{externally} to the glued category in prior
work
\parencite{sterling2021thesis,sterling2021normalization,gratzer2021normalization}. This
no longer works in the higher dimensional setting because, once
externalized, some construction involves infinitely many coherence
data difficult to handle. An internal approach is taken by
\textcite{bocquet2021relative} who prove the uniqueness of normal
forms within the framework of multimodal type theory
\parencite{gratzer2021multimodal}, but its interpretation in
\((\infty, 1)\)-categories is not clear. We instead embed the glued
\((\infty, 1)\)-category into another \((\infty, 1)\)-category to which the
normalization function is internalized and then use ordinary homotopy
type theory as its internal language. This is achieved by the
technique developed by \textcite{uemura2022diagrams-arxiv} of
internalizing a diagram of \((\infty, 1)\)-categories to an
\((\infty, 1)\)-category obtained by a generalization of the Artin gluing.

\subsection{Contributions}
\label{sec:contributions}

Our contribution is a general strategy for normalization and coherence
for \(\infty\)-type theories. This is a combination of
\citeauthor{fiore2020coherence}'s idea of deriving coherence from
normalization \parencite{fiore2020coherence} and
\citeauthor{sterling2021thesis}'s synthetic Tait computability for
normalization for dependent type theories
\parencite{sterling2021thesis,sterling2021normalization,gratzer2021normalization}. We
demonstrate our strategy by a simple example of \(\infty\)-type
theory. Heuristically, our strategy works for many fragments and
variants of (an \(\infty\)-analogue of) Martin-L{\"o}f type theory
\parencite{martin-lof1975intuitionistic,nordstrom1990programming}, and
thus interpretations of such type theories in structured
\((\infty, 1)\)-categories are justified. Finding a good sufficient
condition for an \(\infty\)-type theory to admit normalization and
coherence is left as future work.

The core idea of our normalization proof is a translation of prior
work, especially
\parencite{sterling2021thesis,sterling2021normalization,gratzer2021normalization,bocquet2021relative},
into a higher dimensional language. A technical novelty is the use of
the internalization technique of \textcite{uemura2022diagrams-arxiv}
to prove the uniqueness of normal forms within ordinary homotopy type
theory.

\subsection{Organization}
\label{sec:organization}

In \cref{sec:infty-type-theories}, we review \(\infty\)-type theories
\parencite{nguyen2022type-arxiv}. In \cref{sec:mode-sketches}, we
review the technique of internalizing a diagram of
\((\infty, 1)\)-categories to homotopy type theory
\parencite{uemura2022diagrams-arxiv}. A higher dimensional version of
synthetic Tait computability is obtained as a special case.

From \cref{sec:normal-forms-1}, we begin to demonstrate our
normalization technique and coherence results. In
\cref{sec:normal-forms-1} we construct the type of normal forms
internally to a certain \((\infty, 1)\)-category. In
\cref{sec:normalization-models}, we build the logical relation for
normalization internally to the \((\infty, 1)\)-category used in
\cref{sec:normal-forms-1}. For this we essentially follow prior work
\parencite{sterling2021thesis,sterling2021normalization,gratzer2021normalization},
but the presentation might look different because of the difference of
internal languages to be used. \Cref{sec:norm-results} is devoted to
the uniqueness of normal forms. We essentially follow
\textcite[Section D.5]{bocquet2021relative}. While they use the
framework of multimodal type theory \parencite{gratzer2021multimodal},
our proof is internal to homotopy type theory. Finally in
\cref{sec:coherence-theorem}, we prove coherence results. They depend
on the results in \cref{sec:norm-results} but do not depend on any
intermediate concept introduced in
\cref{sec:normalization-models,sec:norm-results}. Therefore, the
reader may jump from \cref{sec:normal-forms-1} to
\cref{sec:coherence-theorem}, admitting the normalization results.

\Cref{sec:proof-relative-induction,sec:proof-proposition} contain
proofs of two technical propositions stated in
\cref{sec:norm-results,sec:coherence-theorem}.

\subsection{Preliminaries}
\label{sec:preliminaries}

We extensively use \emph{homotopy type theory} \parencite{hottbook} as
internal languages of sufficiently rich \((\infty, 1)\)-categories. The
reader is assumed to be familiar with homotopy type theory and its
semantics
\parencite{awodey2009homotopy,kapulkin2021simplicial,shulman2015inverse,shulman2019toposes,arndt2011homotopy}. Familiarity
with \((\infty, 1)\)-category theory
\parencite{lurie2009higher,cisinski2019higher,riehl2022elements} is
helpful but not necessary. We use a high level language of
\((\infty, 1)\)-category theory which can be understood by analogy with
ordinary category theory. We sometimes use the language of
\((\infty, 2)\)-category theory but do not dive into details.
\(\Fun(\cat, \catI)\) denotes the \((\infty, 1)\)-category (or
\((\infty, 2)\)-category) of functors \(\cat \to \catI\), and
\(\neord[\nat]\) denotes the poset \(\{0 < 1 < \dots < \nat\}\).

\subsection{Related work}
\label{sec:related-work}

Our normalization proof is a translation of \emph{normalization by
  evaluation} into a higher dimensional language. It has first been
used by \textcite{martin-lof1975intuitionistic} and independently by
\textcite{berger1991inverse}. \Textcite{altenkirch1995categorical,fiore2002semantics,fiore2022semantic-arxiv,coquand2019canonicity}
give categorical and algebraic accounts of normalization by
evaluation. \Textcite{sterling2021thesis} simplifies the construction
of logical relations using synthetic Tait computability, leading
normalization for complex dependent type theories
\parencite{sterling2021normalization,gratzer2021normalization}. \Textcite{bocquet2021relative}
prove the uniqueness of normal forms internally to multimodal type
theory
\parencite{gratzer2021multimodal}. \Citeauthor{fiore2020coherence}'s
work \parencite{fiore2020coherence} is the first step towards higher
dimensional normalization by evaluation. Our work is along these
lines.

Proving coherence theorems via normalization is not
new. \Citeauthor{maclane1963natural}'s coherence theorem for monoidal
categories \parencite{maclane1963natural} is proved by rewriting every
term in the language of monoids to a normal form. The crucial part is
that any parallel two rewriting paths are equal, which implies the
uniqueness of normal forms in the \(2\)-dimensional
sense. \Textcite{curien1993substitution} uses normalization by
rewriting to solve the coherence problem in
\citeauthor{seely1984locally}'s interpretation of dependent type
theory in locally cartesian closed categories
\parencite{seely1984locally}. \Textcite{fiore2020coherence} use
normalization by evaluation to prove a coherence theorem for cartesian
closed bicategories.

The approach to coherence problems in the previous work
\parencite{nguyen2022type-arxiv} is a strictification technique along
the lines of
\textcite{hofmann1995interpretation,lumsdaine2015local}. This approach
does not work well in the \((\infty, 1)\)-categorical semantics, because
homotopies between morphisms have to be strictified unlike the
\(1\)-categorical case. \Textcite{cherradi2022interpreting-arxiv}
makes progress in strictifying type-theoretic structures on
\((\infty, 1)\)-categories such as function types and inductive types.

\section{$\infty$-type theories}
\label{sec:infty-type-theories}

\(\infty\)-type theories \parencite{nguyen2022type-arxiv} generalize the
categorical definition of type theories given by
\textcite{uemura2019framework}. Recall that a morphism
\(\mor : \objI \to \obj\) in an \((\infty, 1)\)-category \(\cat\) with finite
limits is \emph{exponentiable} if the pullback functor
\(\mor^{\pbMark} : \cat_{\slice \obj} \to \cat_{\slice \objI}\) has a
right adjoint \(\mor_{\pbMark}\) called the \emph{pushforward along
  \(\mor\)}.

\begin{definition}
  \label{def:infinity-cwr}
  An \emph{\((\infty, 1)\)-category with representable maps} is an
  \((\infty, 1)\)-category \(\cat\) with finite limits equipped with
  a class of exponentiable morphisms in \(\cat\) called
  \emph{representable maps} closed under identities, composition,
  and pullbacks along arbitrary morphisms. A \emph{morphism of
  \((\infty, 1)\)-categories with representable maps} is a
  functor preserving finite limits, representable maps, and
  pushforwards along representable maps.
\end{definition}

Recall that the right fibrations over an \((\infty, 1)\)-category
\(\cat\) are equivalent to the presheaves on \(\cat\)
\parencite[Chapter 5]{cisinski2019higher}.

\begin{example}
  \label{exm:cwr-diagram-rfib}
  Let \(\Cat^{\nMark{2}}\) denote the \((\infty, 2)\)-category of
  \((\infty, 1)\)-categories. Let
  \(\cat : \idxsh \to \Cat^{\nMark{2}}\) be a functor from an
  \((\infty, 2)\)-category \(\idxsh\). Let
  \(\Fun(\idxsh, \RFib)_{\cat}\) be the full subcategory of
  \(\Fun(\idxsh, \Cat^{\nMark{2}})_{\slice \cat}\) spanned by those
  objects \(\sh\) whose components \(\sh_{\idx} \to \cat_{\idx}\) for
  \(\idx \in \idxsh\) are all right fibrations.
  \(\Fun(\idxsh, \RFib)_{\cat}\) has a canonical structure of
  \((\infty, 1)\)-category with representable maps where a map
  \(\map : \shI \to \sh\) is representable if the functor
  \(\map_{\idx} : \shI_{\idx} \to \sh_{\idx}\) has a right adjoint for
  every object \(\idx \in \idxsh\) and if the naturality square
  \begin{equation*}
    \begin{tikzcd}[row sep = 3ex]
      \shI_{\idx}
      \arrow[r, "\shI_{\idxmor}"]
      \arrow[d, "\map_{\idx}"'] &
      \shI_{\idxI}
      \arrow[d, "\map_{\idxI}"] \\
      \sh_{\idx}
      \arrow[r, "\sh_{\idxmor}"'] &
      \sh_{\idxI}
    \end{tikzcd}
  \end{equation*}
  is a morphism of adjunctions from \(\map_{\idx}\) to
  \(\map_{\idxI}\) for any morphism \(\idxmor : \idx \to \idxI\). The
  pushforward \(\map_{\pbMark} \shII\) for \(\shII \to \shI\) is given
  by the pullback
  \((\map_{\pbMark} \shII)_{\idx} \simeq \mapI_{\idx}^{\pbMark}
  \shII_{\idx}\) where \(\mapI_{\idx}\) is the right adjoint of
  \(\map_{\idx}\). It then follows that the precomposition with an
  arbitrary functor \(\fun : \idxsh' \to \idxsh\) induces a morphism of
  \((\infty, 1)\)-categories with representable maps
  \(\Fun(\idxsh, \RFib)_{\cat} \to \Fun(\idxsh', \RFib)_{\cat \comp
    \fun}\). When \(\idxsh = \neord[0]\), we simply write
  \(\RFib_{\cat} = \Fun(\neord[0], \RFib)_{\cat}\).
\end{example}

\begin{definition}
  \label{def:infinity-type-theory}
  An \emph{\(\infty\)-type theory} is a small \((\infty, 1)\)-category with
  representable maps. A \emph{\(1\)-type theory} is an \(\infty\)-type
  theory whose underlying \((\infty, 1)\)-category is a
  \((1, 1)\)-category.

  \label{def:model-of-infinity-type-theory}
  Let \(\tth\) be an \(\infty\)-type theory. A \emph{model \(\model\) of
    \(\tth\)} consists of an \((\infty, 1)\)-category \(\Ctx(\model)\) with
  a terminal object and a morphism of \((\infty, 1)\)-categories with
  representable maps \(\model : \tth \to \RFib_{\Ctx(\model)}\). For an
  object \(\obj \in \tth\), we sometimes write \(\obj(\model)\) instead
  of \(\model(\obj)\). In other words, we identify \(\obj\) with the
  map sending a model \(\model\) to its ``\(\obj\)-component''
  \(\model(\obj)\). A \emph{morphism \(\model \to \modelI\) of models of
    \(\tth\)} consists of a functor
  \(\fun_{\Ctx} : \Ctx(\model) \to \Ctx(\modelI)\) preserving terminal
  objects and a morphism
  \(\fun : \tth \to \Fun(\neord[1], \RFib)_{\fun_{\Ctx}}\) of
  \((\infty, 1)\)-categories with representable maps whose domain and
  codomain projections are \(\model\) and \(\modelI\), respectively. A
  model \(\model\) of \(\tth\) is a \emph{\(1\)-model} if
  \(\Ctx(\model)\) is a \((1, 1)\)-category and if
  \(\model : \tth \to \RFib_{\Ctx(\model)}\) is valued in
  \(0\)-truncated objects.
\end{definition}

Since the definition of \(\infty\)-type theories is \emph{essentially
algebraic} (more precisely, the \((\infty, 1)\)-category of \(\infty\)-type
theories is \emph{presentable}
\parencite[Section 3.1]{nguyen2022type-arxiv}), examples of \(\infty\)-type
theories are presented by \emph{generators and relations}.

\begin{example}
  \label{exm:basic-dependent-type-theory}
  The \(\infty\)-type theory \(\basicTT\) freely generated by one
  representable map \(\typeof : \Tm \to \Ty\) exists. The space of
  morphisms of \(\infty\)-type theories \(\basicTT \to \tth\) is equivalent
  to the space of representable maps in \(\tth\). Therefore, a model
  \(\model\) of \(\basicTT\) consists of an \((\infty, 1)\)-category
  \(\Ctx(\model)\) with a terminal object and a representable map
  \(\typeof(\model) : \Tm(\model) \to \Ty(\model)\) of right fibrations
  over \(\Ctx(\model)\). This is a higher dimensional generalization
  of the notion of a \emph{natural model}
  \parencite{awodey2018natural,fiore2012discrete}. We think of
  \(\Ctx(\model)\) as the \((\infty, 1)\)-category of \emph{contexts} and
  \emph{substitutions} in \(\model\), \(\Ty(\model)\) as the right
  fibration of \emph{types} in \(\model\), and \(\Tm(\model)\) as the
  right fibration of \emph{terms} in \(\model\).
\end{example}

\begin{example}
  \label{exm:type-theory-truncation}
  Let \(\tth\) be an \(\infty\)-type theory. The universal \(1\)-type
  theory \(\trunc{\tth}_{1}\) under \(\tth\) exists. It is a
  \(1\)-type theory equipped with a morphism
  \(\tth \to \trunc{\tth}_{1}\) along which any morphism
  \(\tth \to \tthI\) to any \(1\)-type theory \(\tthI\) uniquely
  extends.
\end{example}

\subsection{Coherence problems}
\label{sec:coherence-problems}

Recall that any \(\infty\)-type theory has the initial model
\parencite[Section 4.2]{nguyen2022type-arxiv}.

\begin{problem}[Coherence problem]
  \label{prb:coherence-problem}
  Let \(\tth\) be an \(\infty\)-type theory. Does the initial model of
  \(\tth\) coincide with the initial model of \(\trunc{\tth}_{1}\)
  (\cref{exm:type-theory-truncation})?  Equivalently, is the initial
  model of \(\tth\) a \(1\)-model?
\end{problem}

\(\trunc{\tth}_{1}\) is to be a type theory we are interested in. A
model of \(\tth\) is considered as a \emph{higher-dimensional model}
of \(\trunc{\tth}_{1}\). Higher-dimensional models are easily
constructed from \((\infty, 1)\)-categories with some structures. Once the
coherence problem is positively solved, the initial model of
\(\trunc{\tth}_{1}\) admits a unique morphism to an arbitrary model of
\(\tth\), which justifies interpreting the type theory
\(\trunc{\tth}_{1}\) in a higher structure.

The notion of \(\infty\)-type theory is too general to solve
\cref{prb:coherence-problem} at once. We restrict our attention to
\emph{cellular} \(\infty\)-type theories in the sense that they are
obtained from \(\basicTT\) (\cref{exm:basic-dependent-type-theory}) by
freely adjoining morphisms and homotopies. This notion covers
Martin-L{\"o}f type theory \parencite{martin-lof1975intuitionistic}
and its extension by new type constructors, but does not cover type
theories with new judgment forms such as cubical type theory
\parencite{cohen2016cubical}.

\subsection{Interpretations in $\infty$-logoses}
\label{sec:interpr-infty-toposes}

Recall that an \emph{\(\infty\)-logos}, also known as an
\emph{\(\infty\)-topos} \parencite{lurie2009higher,anel2021topo-logie}, is
an \((\infty, 1)\)-category which admits an interpretation of homotopy type
theory \parencite{hottbook}. The interpretation is justified by
strictifying the \(\infty\)-logos \parencite{shulman2019toposes} and then
by the categorical semantics of homotopy type theory
\parencite{kapulkin2021simplicial,lumsdaine2015local,awodey2009homotopy,shulman2015inverse}.

\begin{definition}
  Let \(\tth\) be an \(\infty\)-type theory and \(\logos\) an
  \(\infty\)-logos. An \emph{interpretation of \(\tth\) in \(\logos\)} is a
  morphism of \((\infty, 1)\)-categories with representable maps
  \(\tth \to \logos\) where we choose all the maps in \(\logos\) as
  representable maps.
\end{definition}

When \(\tth\) is presented by some data, interpretations of \(\tth\)
in an \(\infty\)-logos \(\logos\) can be axiomatized in the internal
language of \(\logos\).

\begin{definition}
  \label{def:type-term-structure}
  In type theory, a \emph{type-term structure} \(\tytmstr\) consists
  of the following data.
  \begin{align*}
    \begin{autobreak}
      \MoveEqLeft
      \ilTy_{\tytmstr} :
      \ilUniv
    \end{autobreak}
    \\
    \begin{autobreak}
      \MoveEqLeft
      \ilTm_{\tytmstr} :
      \ilTy_{\tytmstr}
      \to \ilUniv
    \end{autobreak}
  \end{align*}
  We regard \(\tytmstr \mapsto \ilTy_{\tytmstr}\) as an implicit coercion
  from type-term structures to types and \(\ilTm_{\tytmstr}\) as an
  implicit coercion from \(\ilTy_{\tytmstr}\) to types. That is, we
  write \(\ty : \tytmstr\) to mean \(\ty : \ilTy_{\tytmstr}\) and
  \(\tm : \ty\) for \(\ty : \tytmstr\) to mean
  \(\tm : \ilTm_{\tytmstr}(\ty)\).
\end{definition}

\begin{example}
  \label{exm:interpretation-D}
  An interpretation of \(\basicTT\) in \(\logos\) is a type-term
  structure in the internal language of \(\logos\). An interpretation
  of a cellular \(\infty\)-type theory \(\tth\) in \(\logos\) is a
  type-term structure equipped with some operators and homotopies
  which we call a \emph{\(\tth\)-type-term structure}.
\end{example}

Some natural transformations between interpretations can be
internalized. In fact, the domain need not be an interpretation.

\begin{definition}
  Let \(\tth\) be an \(\infty\)-type theory, \(\logos\) an
  \(\infty\)-logos, \(\itpr : \tth \to \logos\) an
  interpretation, and \(\fun : \tth \to \logos\) a functor preserving
  pullbacks of representable maps. A a natural transformation
  \(\trans : \fun \To \itpr\) is \emph{cartesian on representable
    maps} if for any representable map \(\mor : \objI \to \obj\) in
  \(\tth\), the canonical map
  \begin{math}
    \fun(\objI) \to \fun(\obj) \times_{\itpr(\obj)} \itpr(\objI)
  \end{math}
  is an equivalence.
\end{definition}

\begin{example}
  \label{exm:cartesian-natural-transformation-internal}
  Let \(\logos\) be an \(\infty\)-logos, \(\tth\) a cellular
  \(\infty\)-type theory, \(\itprI : \tth \to \logos\) an interpretation, and
  \(\itpr : \tth \to \logos\) a functor preserving pullbacks of
  representable maps. Although \(\itpr\) is not an interpretation, it
  has a component at \(\typeof : \Tm \to \Ty\) which induces a type-term
  structure \(\tytmstr\) in the internal language of \(\logos\). Let
  \(\tytmstrI\) be the type-term structure corresponding to
  \(\itprI\). A \emph{cartesian morphism \(\map\) from \(\tytmstr\) to
    \(\tytmstrI\)} consists of of a
  function
  \begin{math}
    \ilCorrApp_{\ilTy_{1}}(\map) :
    \tytmstr
    \to \tytmstrI
  \end{math}
  and an equivalence
  \begin{math}
    \ilCorrApp_{\ilTm_{1}}(\map) :
    \ilForall_{\ty : \tytmstr}
    \ty \simeq \ilCorrApp_{\ilTy_{1}}(\map, \ty)
  \end{math}
  that respect the operators of \(\tth\) in a suitable sense. Note
  that, since \(\ilCorrApp_{\ilTm_{1}}(\map)\) is an equivalence,
  \(\map\) also acts on families of \dots of families of
  types/terms. Any cartesian morphism from \(\tytmstr\) to
  \(\tytmstrI\) uniquely extends to a natural transformation cartesian
  on representable maps from \(\itpr\) to \(\itprI\).
\end{example}

Finally, an interpretation in an \(\infty\)-logos can be turned into a
model.

\begin{construction}
  \label{cst:extern-itpr}
  Let \(\tth\) be an \(\infty\)-type theory, \(\logos\) an
  \(\infty\)-logos, and \(\itpr : \tth \to \logos\) an
  interpretation. Suppose that a full subcategory
  \(\cat \subset \logos\) contains the terminal object and is closed under
  \(\itpr\)-context extension, that is,
  \(\ty^{\pbMark} \itpr(\objI) \in \cat\) for any representable map
  \(\objI \to \obj\) in \(\tth\), any object \(\ctx \in \cat\), and any
  map \(\ty : \ctx \to \itpr(\obj)\). Then the composite
  \begin{math}
    \tth
    \xrightarrow{\itpr}
    \logos
    \xrightarrow{\nerve_{\cat}}
    \RFib_{\cat}
  \end{math}
  is a morphism of \((\infty, 1)\)-categories with representable maps,
  where the representable maps in \(\RFib_{\cat}\) is the standard
  ones, and \(\nerve_{\cat}\) takes \(\sh \in \logos\) to the right
  fibration \(\cat \times_{\logos} \logos_{\slice \sh}\) over
  \(\cat\). This defines a model \(\extern{\itpr}_{\cat}\) of \(\tth\)
  such that \(\Ctx(\extern{\itpr}_{\cat}) \simeq \cat\) which we call the
  \emph{externalization of \(\itpr\) at \(\cat\)}.
\end{construction}

\section{Internalizing diagrams}
\label{sec:mode-sketches}

We review the technique of internalizing a diagram of
\(\infty\)-logoses to type theory \parencite{uemura2022diagrams-arxiv}. We
follow \parencite{hottbook} for notations in homotopy type theory. We
use curly brackets like
\begin{math}
  \map :
  \ilForall_{\implicit{\var : \ty}}
  \tyI(\var)
\end{math}
to indicate that the argument \(\var\) of \(\map\) is implicit.

\subsection{Modalities in homotopy type theory}
\label{sec:modal-homot-type}

We recall \emph{modalities in homotopy type theory}
\parencite{rijke2020modalities}. In this section, we work in type
theory. A \emph{lex, accessible modality} \(\mode\), {\acrLAM} for
short, consists of a function \(\ilIn_{\mode} : \ilUniv \to \ilProp\)
satisfying that the inclusion \(\ilUniv_{\mode} \to \ilUniv\), where
\(\ilUniv_{\mode} \defeq \{\ty : \ilUniv \mid \ilIn_{\mode}(\ty)\}\), has
a left adjoint \(\opMode_{\mode} : \ilUniv \to \ilUniv_{\mode}\) with
unit
\(\unitMode_{\mode} : \ilForall_{\ty : \ilUniv} \ty \to \opMode_{\mode}
\ty\), that \(\ilIn_{\mode}\) is closed under dependent pair types,
that \(\opMode_{\mode}\) preserves pullbacks, and a certain
``accessibility'' condition. A type satisfying \(\ilIn_{\mode}\) is
called \emph{\(\mode\)-modal}. More generally, we say a (many-sorted)
structure is \emph{\(\mode\)-modal} if its carrier types are
\(\mode\)-modal. Let \(\ilLAM\) denote the type of {\acrLAMs}. For
{\acrLAMs} \(\mode\) and \(\modeI\), we write
\(\mode \le {}^{\orthMark}\modeI\) if for any \(\mode\)-modal type
\(\ty\) and for any \(\modeI\)-modal type \(\tyI\), the diagonal
function
\begin{math}
  \tyI \to (\ty \to \tyI)
\end{math}
is an equivalence.

\begin{proposition}[{Fracture and gluing theorem}]
  \label{fact:fracture-and-gluing}
  Let \(\mode\) and \(\modeI\) be {\acrLAMs} such that
  \(\mode \le {}^{\orthMark}\modeI\).
  \begin{enumerate}
  \item A {\acrLAM} \(\mode \lor \modeI\) called the \emph{canonical
      join} exists.
  \item
    \begin{math}
      \ilUniv_{\mode \lor \modeI}
      \simeq \ilExists_{\tyI : \ilUniv_{\modeI}}\tyI \to \ilUniv_{\mode}
    \end{math}
    where the right-to-left function sends a \((\tyI, \ty)\) to
    \(\ilExists_{\var : \tyI}\ty(\var)\).
  \end{enumerate}
\end{proposition}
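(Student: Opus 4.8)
The plan is to build \(\mode\lor\modeI\) explicitly as a reflective subuniverse and then verify the {\acrLAM} axioms. First I would reinterpret the hypothesis: the condition \(\mode \le {}^{\orthMark}\modeI\) says exactly that every \(\mode\)-modal type is \(\modeI\)-connected, i.e.\ has contractible \(\modeI\)-reflection, since \(\modeI\)-connectedness of \(\ty\) is equivalent to \(\tyI \to (\ty \to \tyI)\) being an equivalence for all \(\modeI\)-modal \(\tyI\). Guided by the fracture square, I would then define the reflector on a type \(\ty\) by \( \opMode_{\mode\lor\modeI}\ty \defeq \ilExists_{\var : \opMode_{\modeI}\ty}\opMode_{\mode}\bigl(\mathrm{fib}_{\unitMode_{\modeI}}(\var)\bigr) \), with unit sending \(x\) to the pair consisting of \(\unitMode_{\modeI}(x)\) and the \(\mode\)-unit of \((x,\mathrm{refl})\). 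By construction this target is a dependent sum of a family of \(\mode\)-modal types over a \(\modeI\)-modal base, so the candidate class of modal types is precisely the types of this glued shape, and I would define \(\ilIn_{\mode\lor\modeI}(\ty)\) as the proposition that this unit is an equivalence.

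The crux is a reconstruction lemma powered by orthogonality: for any family \(\ty\) of \(\mode\)-modal types over any base \(A\), the first projection \(\ilExists_{\var:A}\ty(\var) \to A\) is a \(\modeI\)-connected map, because its fibers are the \(\mode\)-modal types \(\ty(\var)\), which are \(\modeI\)-connected by hypothesis; hence \(\opMode_{\modeI}\) inverts this projection and \(\opMode_{\modeI}(\ilExists_{\var:A}\ty(\var)) \simeq \opMode_{\modeI}A\). Taking \(A\) to be \(\modeI\)-modal gives \(\opMode_{\modeI}(\ilExists_{\var:A}\ty(\var)) \simeq A\), which is the fracture square being a pullback. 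From this I would derive the universal property of the reflector: for any target glued from a \(\modeI\)-modal base and \(\mode\)-modal fibers, a map out of \(\ty\) into it has a base component that factors through \(\opMode_{\modeI}\ty\) by the lemma, while its dependent component factors through the \(\opMode_{\mode}\) of the fibers by the \(\mode\)-universal property; assembling these shows precomposition with the unit is an equivalence. This simultaneously proves that the glued types are exactly the \((\mode\lor\modeI)\)-modal types and that the unit is a reflection, so \(\mode\lor\modeI\) is a reflective subuniverse; closure under dependent sums and the accessibility condition are then inherited from \(\mode\) and \(\modeI\), while left-exactness requires a separate argument addressed below.

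For the second part I would take the right-to-left map \((\tyI, \ty) \mapsto \ilExists_{\var:\tyI}\ty(\var)\) as stated, which lands in the modal types by construction, together with the candidate inverse \(M \mapsto (\opMode_{\modeI}M,\ \var \mapsto \opMode_{\mode}(\mathrm{fib}_{\unitMode_{\modeI}}(\var)))\). The round-trip starting from gluing data returns, by the reconstruction lemma, the base \(\opMode_{\modeI}(\ilExists_{\var:\tyI}\ty(\var)) \simeq \tyI\) and fibers \(\opMode_{\mode}(\ty(\var)) \simeq \ty(\var)\) (the latter since \(\ty(\var)\) is already \(\mode\)-modal), while the other round-trip is exactly the reflection unit being an equivalence on modal types; transporting these equivalences through the dependent sum yields the claimed equivalence \(\ilUniv_{\mode\lor\modeI} \simeq \ilExists_{\tyI : \ilUniv_{\modeI}}\tyI \to \ilUniv_{\mode}\). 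I expect the main obstacle to be twofold: carefully proving the universal property of the reflector — in particular making the base and fibre factorizations cohere into a single equivalence of mapping types — and verifying left-exactness of \(\opMode_{\mode\lor\modeI}\), since preservation of pullbacks by this two-layered reflector must be reduced to that of \(\opMode_{\mode}\) and \(\opMode_{\modeI}\) and is the step most likely to require the full strength of the orthogonality hypothesis.
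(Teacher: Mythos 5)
Your proposal and the paper take different routes only in the sense that the paper does not really prove this statement at all: it cites the fracture and gluing theorem of Rijke--Shulman--Spitters (their Theorem 3.50) for everything except accessibility, and a separate proposition of the companion paper for accessibility. What you have written is essentially a reconstruction of the proof of that cited theorem, and its main lines are sound: the reinterpretation of \(\mode \le {}^{\orthMark}\modeI\) as ``every \(\mode\)-modal type is \(\modeI\)-connected'' is correct; the reflector \(\opMode_{\mode \lor \modeI}\ty = \ilExists_{\var : \opMode_{\modeI}\ty}\opMode_{\mode}(\mathrm{fib}_{\unitMode_{\modeI}}(\var))\) is the right one; and your reconstruction lemma (projections of families of \(\mode\)-modal types are \(\modeI\)-connected maps, hence inverted by \(\opMode_{\modeI}\)) is exactly the engine that makes both the universal property and the round trips in part (2) work. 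The left-exactness you defer is less delicate than you fear: by lexness of \(\mode\) your reflector is the pullback \(\opMode_{\mode}\ty \times_{\opMode_{\mode}\opMode_{\modeI}\ty} \opMode_{\modeI}\ty\), a limit of pullback-preserving functors, so it preserves pullbacks.

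The one genuine gap is accessibility. You assert it is ``inherited from \(\mode\) and \(\modeI\),'' but it is not: if \(\mode\) and \(\modeI\) are presented by nullification at families \(\{A_i\}\) and \(\{B_j\}\), nullification at the combined family yields the \emph{meet} (types modal for both), whereas \(\ilUniv_{\mode \lor \modeI}\) strictly contains both \(\ilUniv_{\mode}\) and \(\ilUniv_{\modeI}\); a generating family for the glued subuniverse therefore has to be constructed by hand. This is precisely the point the paper peels off into a separate reference, so it deserves an actual argument rather than a one-word dismissal. The remaining details you flag --- cohering the base and fibre factorizations into a single equivalence of mapping types --- are routine given the reconstruction lemma and the universal properties of \(\opMode_{\mode}\) and \(\opMode_{\modeI}\).
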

\begin{proof}
  All but the accessibility of \(\mode \lor \modeI\) follows from
  \parencite[Theorem 3.50]{rijke2020modalities}. See
  \parencite[Proposition 2.18]{uemura2022diagrams-arxiv} for the
  accessibility.
\end{proof}

\subsection{Mode sketches}
\label{sec:mode-sketches-1}

We work in type theory. For {\acrLAMs} \(\mode_{0}\), \(\mode_{1}\),
\(\mode_{2}\), we define a function
\(\opMode^{\mode_{1}}_{\mode_{0}} : \ilUniv_{\mode_{1}} \to
\ilUniv_{\mode_{0}}\) to be the restriction of \(\opMode_{\mode_{0}}\)
to \(\ilUniv_{\mode_{1}} \subset \ilUniv\) and a function
\begin{math}
  \unitMode^{\mode_{0}; \mode_{2}}_{\mode_{1}} :
  \ilForall_{\ty : \ilUniv_{\mode_{2}}}
  \opMode^{\mode_{2}}_{\mode_{0}} \ty \to
  \opMode^{\mode_{1}}_{\mode_{0}} \opMode^{\mode_{2}}_{\mode_{1}} \ty
\end{math}
by
\begin{math}
  \unitMode^{\mode_{0}; \mode_{2}}_{\mode_{1}}(\ty) \defeq
  \opMode_{\mode_{0}} \unitMode_{\mode_{1}}(\ty).
\end{math}

\begin{definition}
  A \emph{mode sketch} \(\modesketch\) consists of a decidable finite
  poset \(\idxModesketch_{\modesketch}\) and a subset
  \(\triModesketch_{\modesketch}\) of triangles in
  \(\idxModesketch_{\modesketch}\) called \emph{thin} triangles. Here,
  by a \emph{decidable} poset we mean a poset whose ordering relation
  is decidable, and by a \emph{triangle} in
  \(\idxModesketch_{\modesketch}\) we mean a strictly ordered triple
  \((\idx_{0} < \idx_{1} < \idx_{2})\) of elements of
  \(\idxModesketch_{\modesketch}\). Note that this definition also
  makes sense in the metatheory. Every mode sketch in the metatheory
  can be encoded in type theory since it is finite.
\end{definition}

Let \(\modesketch\) be a mode sketch and let \(\mode : \modesketch \to
\ilLAM\). We consider the following axioms.

\begin{axiom}
  \label{axm:mode-sketch-orthogonal}
  \begin{math}
    \mode(\idx) \le {}^{\orthMark}\mode(\idxI)
  \end{math}
  for any \(\idxI \not\le \idx\) in \(\modesketch\).
\end{axiom}

\begin{axiom}
  \label{axm:mode-sketch-thin}
  For any thin triangle \((\idx_{0} < \idx_{1} < \idx_{2})\) in
  \(\modesketch\), the function
  \begin{math}
    \unitMode^{\mode(\idx_{0});
      \mode(\idx_{2})}_{\mode(\idx_{1})}(\ty) :
    \opMode^{\mode(\idx_{2})}_{\mode(\idx_{0})} \ty \to
    \opMode^{\mode(\idx_{1})}_{\mode(\idx_{0})}
    \opMode^{\mode(\idx_{2})}_{\mode(\idx_{1})} \ty
  \end{math}
  is an equivalence for every \(\ty : \ilUniv_{\mode(\idx_{2})}\).
\end{axiom}

\begin{remark}
  Let \(\modesketch\) be a mode sketch and let
  \(\mode : \modesketch \to \ilLAM\) be a function satisfying
  \cref{axm:mode-sketch-orthogonal}. For any decidable subset
  \(\idxty \subset \modesketch\), the canonical join
  \(\biglor_{\idx : \idxty} \mode(\idx)\) exists by
  \cref{fact:fracture-and-gluing}.
\end{remark}

\subsection{Models of mode sketches}
\label{sec:models-mode-sketches}

We work in the metatheory.

\begin{construction}
  Let \(\modesketch\) be a mode sketch. It presents an
  \((\infty, 2)\)-category \(\realize{\modesketch}\) defined as
  follows. The \(0\)-cells are the elements of \(\modesketch\). The
  \(1\)-cells are generated by \((\idx < \idxI) : \idx \to \idxI\) for
  every \(\idx < \idxI\) in \(\modesketch\). The \(2\)-cells are
  generated by
  \((\idx_{0} < \idx_{1} < \idx_{2}) : (\idx_{1} < \idx_{2}) \comp
  (\idx_{0} < \idx_{1}) \To (\idx_{0} < \idx_{2})\) for every triangle
  \(\idx_{0} < \idx_{1} < \idx_{2}\) in \(\modesketch\). A generating
  \(2\)-cell is made invertible when it is thin. For every chain
  \(\idx_{0} < \idx_{1} < \dots < \idx_{\nat}\) in \(\modesketch\), a
  certain diagram in \(\realize{\modesketch}\) is filled by a
  homotopy.
\end{construction}

\begin{theorem}
  \label{thm:mode-sketch-model}
  Let \(\modesketch\) be a mode sketch and let
  \(\cat : \realize{\modesketch} \to \Cat^{\nMark{2}}\) be a
  functor. Then \(\Fun(\realize{\modesketch}, \RFib)_{\cat}\) is an
  \(\infty\)-logos equipped with a function
  \(\mode : \modesketch \to \ilLAM\) in its internal language satisfying
  \cref{axm:mode-sketch-orthogonal,axm:mode-sketch-thin}. Moreover,
  for any \(\idx \in \modesketch\), the function
  \(\opMode_{\mode(\idx)} : \ilUniv \to \ilUniv_{\mode(\idx)}\) is
  interpreted as the restriction functor
  \(\Fun(\realize{\modesketch}, \RFib)_{\cat} \to \RFib_{\cat(\idx)}\).
\end{theorem}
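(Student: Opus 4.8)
The plan is to deduce the statement from the general internalization machinery of \parencite{uemura2022diagrams-arxiv}, the genuine work being to exhibit the underlying \(\infty\)-logos concretely and then to match the combinatorics of the mode sketch against the two axioms. First I would identify \(\Fun(\realize{\modesketch}, \RFib)_{\cat}\) as a presheaf \(\infty\)-logos. By straightening/unstraightening, a functor \(\realize{\modesketch} \to \RFib\) lying over \(\cat\) is the same datum as a right fibration over the oplax colimit of \(\cat\); writing \(\mathcal{E}\) for this oplax colimit (the total \(\infty\)-category of the unstraightening of \(\cat\) over \(\realize{\modesketch}\)), we obtain \(\Fun(\realize{\modesketch}, \RFib)_{\cat} \simeq \RFib_{\mathcal{E}}\). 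In particular it is a presheaf \(\infty\)-logos, which settles the first claim; alternatively one may view it as a lax limit in the \(\infty\)-category of \(\infty\)-logoses, which exists and is again an \(\infty\)-logos. Each \(\idx \in \modesketch\) gives a fully faithful fiber inclusion \(\iota_{\idx} : \cat(\idx) \hookrightarrow \mathcal{E}\), hence the adjoint pair \(\iota_{\idx}^{*} \dashv \iota_{\idx *}\) of restriction and right Kan extension. The decisive feature of the oplax colimit is that a morphism of \(\mathcal{E}\) from the \(\idx'\)-fiber into the \(\idx''\)-fiber exists only when \(\idx' \le \idx''\), realized by the transition functor \(\cat(\idx' < \idx'')\).

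Next I would construct the modalities. For each \(\idx\) let \(\mode(\idx)\) be the reflective localization of \(\RFib_{\mathcal{E}}\) onto the essential image of \(\iota_{\idx *}\), i.e.\ the presheaves right Kan extended from the \(\idx\)-fiber. Its idempotent monad is \(\iota_{\idx *}\iota_{\idx}^{*}\), which is left exact because \(\iota_{\idx}^{*}\) preserves finite limits and \(\iota_{\idx *}\), being a right adjoint, preserves all limits; accessibility and closure under dependent pairs hold as for any lex accessible localization, so \(\mode(\idx)\) is indeed a {\acrLAM}. By construction \(\ilUniv_{\mode(\idx)} \simeq \RFib_{\cat(\idx)}\) and the reflector \(\opMode_{\mode(\idx)}\) is interpreted as \(\iota_{\idx}^{*}\), that is, the restriction functor \(\RFib_{\mathcal{E}} \to \RFib_{\cat(\idx)}\); this is the last claim. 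Since \(\modesketch\) is a decidable finite poset, the values \(\idx \mapsto \mode(\idx)\) assemble into a single internal function \(\mode : \modesketch \to \ilLAM\).

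It remains to verify the two axioms. For \cref{axm:mode-sketch-thin}, the construction is functorial in \(\realize{\modesketch}\), so \(\opMode^{\mode(\idx_2)}_{\mode(\idx_0)}\) and \(\opMode^{\mode(\idx_1)}_{\mode(\idx_0)}\opMode^{\mode(\idx_2)}_{\mode(\idx_1)}\) are the functors that \(\RFib \circ \cat\) associates to the \(1\)-cell \((\idx_0 < \idx_2)\) and to the composite \((\idx_1 < \idx_2)\comp(\idx_0 < \idx_1)\) respectively, while \(\unitMode^{\mode(\idx_0);\mode(\idx_2)}_{\mode(\idx_1)}\) is the image of the generating \(2\)-cell \((\idx_0 < \idx_1 < \idx_2)\); when the triangle is thin this \(2\)-cell is invertible by construction of \(\realize{\modesketch}\), hence \(\cat\) and \(\RFib\) carry it to an equivalence, which is exactly the required statement. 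For \cref{axm:mode-sketch-orthogonal}, I would use the standard reformulation: \(\mode(\idx) \le {}^{\orthMark}\mode(\idxI)\) holds iff every \(\mode(\idx)\)-modal type \(X\) is \(\mode(\idxI)\)-connected, i.e.\ \(\opMode_{\mode(\idxI)}X \simeq 1\). Writing \(X = \iota_{\idx *}X_0\) and using \(\opMode_{\mode(\idxI)} = \iota_{\idxI}^{*}\), the value of \(X\) on the \(\idxI\)-fiber is a limit over the comma category of morphisms from the \(\idxI\)-fiber into the \(\idx\)-fiber; this category is empty precisely because \(\idxI \not\le \idx\), so the limit is terminal and \(\iota_{\idxI}^{*}X \simeq 1\), giving the orthogonality.

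The two steps I expect to be delicate are the first identification and the orthogonality computation. Making \(\Fun(\realize{\modesketch}, \RFib)_{\cat} \simeq \RFib_{\mathcal{E}}\) precise requires handling the \((\infty,2)\)-categorical straightening coherently, including the inverted thin \(2\)-cells and the chain fillers built into \(\realize{\modesketch}\); this is where I would lean most heavily on \parencite{uemura2022diagrams-arxiv}. In the orthogonality argument the main subtlety is bookkeeping the variance so that ``\(\idxI \not\le \idx\)'' is exactly the condition emptying the comma category that computes \(\iota_{\idxI}^{*}\iota_{\idx *}\); once the direction of arrows in the oplax colimit is pinned down, the remainder is formal.
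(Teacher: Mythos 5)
Your strategy---realize \(\Fun(\realize{\modesketch}, \RFib)_{\cat}\) as presheaves on an unstraightening \(\mathcal{E}\) of \(\cat\), take the \(\mode(\idx)\) to be the subtoposes \(\iota_{\idx}^{*} \dashv \iota_{\idx *}\), and verify the axioms by computing \(\iota_{\idxI}^{*}\iota_{\idx *}\)---is genuinely different from the paper's proof, which identifies \(\Fun(\realize{\modesketch}, \RFib)_{\cat}\) with the oplax limit of \(\idx \mapsto \RFib_{\cat(\idx)}\) and then cites \parencite[Theorem 10.3]{uemura2022diagrams-arxiv}, where this work is done once for arbitrary diagrams of \(\infty\)-logoses. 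Your route is a legitimate way to reprove that result in the presheaf case, but your orthogonality argument contains two directional errors that cancel. The pointwise formula for the right adjoint to restriction of presheaves gives \((\iota_{\idx *}X_{0})(e)\) as a limit over pairs \((c, \iota_{\idx}(c) \to e)\), i.e.\ over morphisms \emph{from} the \(\idx\)-fiber \emph{into} \(e\)---not, as you write, morphisms from the \(\idxI\)-fiber into the \(\idx\)-fiber. Combined with your stated convention that a morphism of \(\mathcal{E}\) from the \(\idx'\)-fiber to the \(\idx''\)-fiber exists only when \(\idx' \le \idx''\), this would make \(\iota_{\idxI}^{*}\iota_{\idx *}X_{0}\) terminal exactly when \(\idx \not\le \idxI\), which is the wrong half of \cref{axm:mode-sketch-orthogonal}. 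The correct resolution is that \(\mathcal{E}\) must be the unstraightening in which morphisms \emph{descend} along the poset (a map from the \(\idx\)-fiber to the \(\idxI\)-fiber exists only when \(\idxI \le \idx\)), so that \(\RFib_{\mathcal{E}}\) is the oplax limit with structure maps \(X_{\idx} \to \cat(\idx < \idxI)^{*}X_{\idxI}\). You can sanity-check this against \cref{fact:fracture-and-gluing} for \(\modesketch = \{0 < 1\}\): there the component of \(\iota_{0*}X_{0}\) over \(\cat(1)\) is terminal while \(\iota_{1*}X_{1}\) restricts over \(\cat(0)\) to \(\cat(0<1)^{*}X_{1}\), so it is the \(\mode(0)\)-modal objects that are \(\mode(1)\)-connected, as the axiom demands since \(1 \not\le 0\).

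Two further gaps. First, \cref{axm:mode-sketch-orthogonal,axm:mode-sketch-thin} are statements of the internal language, so they quantify over modal types in every context; your computation only treats global modal objects, i.e.\ those of the form \(\iota_{\idx *}X_{0}\), rather than fiberwise-modal maps in an arbitrary slice. This is repairable because the subtoposes involved are suitably stable under base change, but it needs to be said. Second, and more seriously, \(\realize{\modesketch}\) is an \((\infty,2)\)-category with non-invertible generating \(2\)-cells for the non-thin triangles (plus the chain fillers), so \(\Fun(\realize{\modesketch}, \RFib)_{\cat}\) is not literally a presheaf \(\infty\)-logos on a \(1\)-categorical Grothendieck construction; as written, your identification only covers the case where every triangle is thin. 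You defer exactly this point to \parencite{uemura2022diagrams-arxiv}, but that is where most of the content of the theorem lives, which is presumably why the paper proves the statement by citation rather than by the computation you attempt.
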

\begin{proof}
  This follows from \parencite[Theorem 10.3]{uemura2022diagrams-arxiv}
  because \(\Fun(\realize{\modesketch}, \RFib)_{\cat}\) is the oplax
  limit of the diagram \(\idx \mapsto \RFib_{\cat(\idx)}\).
\end{proof}

\begin{remark}
  \label{rem:modal-tininess}
  Let \(\modesketch\) be a mode sketch and \(\tth\) a cellular
  \(\infty\)-type theory. By the definition of morphisms of models of
  \(\tth\), a diagram of models of \(\tth\) indexed over
  \(\realize{\modesketch}\) is a pair \((\cat, \fun)\) consisting of a
  functor \(\cat : \realize{\modesketch} \to \Cat^{\nMark{2}}\) and a
  morphism of \((\infty, 1)\)-categories with representable maps
  \(\fun : \tth \to \Fun(\realize{\modesketch}, \RFib)_{\cat}\). Let
  \(\tytmstr\) be the \(\tth\)-type-term structure in the internal
  language of \(\Fun(\realize{\modesketch}, \RFib)_{\cat}\)
  corresponding to \(\fun\). For any \(\idx \in \modesketch\), since the
  restriction functor
  \(\Fun(\realize{\modesketch}, \RFib)_{\cat} \to \RFib_{\cat_{\idx}}\)
  is a morphism of \(\infty\)-categories of representable maps, for any
  \(\ty : \tytmstr\) and for any
  \(\tyX : \ty \to \ilUniv_{\biglor_{\idxI :
      \modesketch}\mode(\idxI)}\), the canonical map
  \begin{equation*}
    \opMode_{\mode(\idx)}(\ilForall_{\tm : \ty}\tyX(\tm))
    \to (\ilForall_{\tm : \ty}\opMode_{\mode(\idx)} \tyX(\tm))
  \end{equation*}
  is an equivalence. This condition is equivalent to the
  \emph{preservation of virtual context extensions} in the terminology
  of \textcite{bocquet2021relative}.
\end{remark}

\subsection{Logical relations as types}
\label{sec:logical-relations-as}

We work in type theory. Let \(\mode\) and \(\modeI\) be {\acrLAMs}
such that \(\mode \le
{}^{\orthMark}\modeI\). \Cref{fact:fracture-and-gluing} asserts that a
type in \(\ilUniv_{\mode \lor \modeI}\) is fractured into a
\(\modeI\)-modal type equipped with a \(\mode\)-modal unary
(proof-relevant) relation on it. \(\ilUniv_{\mode \lor \modeI}\)
allows us to work with logical relations synthetically, summarized by
the slogan ``logical relations as types'' of
\textcite{sterling2021logical}.

One can show that the equivalence in
\cref{fact:fracture-and-gluing} respects arbitrary higher-order
structure in the following sense. Let \(\metaty\) be a construction of
a type only using universes, the unit type, dependent pair types,
dependent function types, and identity types. For any sufficiently
large universe \(\ilUniv\) and for any {\acrLAM} \(\modeI\), we can
translate \(\metaty\) into a type
\(\trsl{\metaty}_{\ilUniv_{\modeI}} : \ilUniv_{\modeI}\). Note that
\(\trsl{\blank}\) is a syntactic translation which is not internal to
type theory. For another {\acrLAM} \(\mode\), the \emph{logical
  relation translation}
\parencite{bernardy2012proofs,shulman2015inverse,uemura2017fibred,lasson2014canonicity}
\(\trsl{\metaty}^{\neord[1]}_{\ilUniv_{\modeI}, \ilUniv_{\mode}} :
\trsl{\metaty}_{\ilUniv_{\modeI}} \to \ilUniv_{\mode}\) is defined.

\begin{theorem}
  \label{thm:higher-order-fracture}
  Under the above notation, if \(\mode \le {}^{\orthMark}\modeI\), then
  \(\trsl{\metaty}_{\ilUniv_{\mode \lor \modeI}} : \ilUniv_{\mode \lor
    \modeI}\) corresponds via the equivalence in
  \cref{fact:fracture-and-gluing} to the pair
  \begin{math}
    (\trsl{\metaty}_{\ilUniv_{\modeI}},
    \trsl{\metaty}^{\neord[1]}_{\ilUniv_{\modeI}, \ilUniv_{\mode}}),
  \end{math}
  or in other words we have
  \begin{equation}
    \label{eq:fracture-structure-1}
    \trsl{\metaty}_{\ilUniv_{\mode \lor \modeI}} \simeq
    \ilExists_{\var : \trsl{\metaty}_{\ilUniv_{\modeI}}}\trsl{\metaty}^{\neord[1]}_{\ilUniv_{\modeI}, \ilUniv_{\mode}}(\var).
  \end{equation}
\end{theorem}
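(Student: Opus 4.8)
The plan is to prove the displayed equivalence \eqref{eq:fracture-structure-1} directly and to observe that it is exactly the asserted correspondence: by \cref{fact:fracture-and-gluing} the right-to-left leg of the fracture equivalence sends a pair \((\tyI,\ty)\) to the total type \(\ilExists_{\var:\tyI}\ty(\var)\), so exhibiting \(\trsl{\metaty}_{\ilUniv_{\mode\lor\modeI}}\) as \(\ilExists_{\var:\trsl{\metaty}_{\ilUniv_{\modeI}}}\trsl{\metaty}^{\neord[1]}_{\ilUniv_{\modeI},\ilUniv_{\mode}}(\var)\) is the same as saying it corresponds to the pair \((\trsl{\metaty}_{\ilUniv_{\modeI}},\trsl{\metaty}^{\neord[1]}_{\ilUniv_{\modeI},\ilUniv_{\mode}})\). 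Since \(\metaty\) is assembled from subexpressions that bind variables, I would first strengthen the claim to type expressions in a context and prove it by induction on the syntax of \(\metaty\). In this fibered form the induction hypothesis reads: for every point of the \(\mode\lor\modeI\)-interpretation of the ambient context, fractured into a point of the \(\modeI\)-interpretation together with a witness of the logical relation of the context, the \(\mode\lor\modeI\)-interpretation of \(\metaty\) fractures as its \(\modeI\)-interpretation paired with its logical relation. Throughout I abbreviate \(\trsl{A}_{\ilUniv_{\modeI}}\) as \(\trsl{A}_{\modeI}\) and \(\trsl{A}^{\neord[1]}_{\ilUniv_{\modeI},\ilUniv_{\mode}}\) as \(\trsl{A}^{\neord[1]}\).

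The base cases and the \(\ilExists\)- and identity-cases are comparatively routine. For the \emph{universe} the two interpretations are \(\ilUniv_{\mode\lor\modeI}\) and \(\ilUniv_{\modeI}\) and the logical relation is \(\ty\mapsto(\ty\to\ilUniv_{\mode})\); the required equivalence is then literally \cref{fact:fracture-and-gluing}(2), so this case is the fracture theorem itself. The \emph{unit type} is immediate. For a \emph{dependent pair} \(\ilExists_{\var:A}B\) I would substitute the induction hypotheses for \(A\) and for \(B\) (in the context extended by \(\var:A\)) and rearrange the resulting iterated \(\ilExists\) by associativity together with one transposition of the two middle factors; this is pure \(\Sigma\)-bookkeeping and uses neither lex-ness nor orthogonality. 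For an \emph{identity type} I would use the standard description of paths in a \(\Sigma\)-type: the fracture \(\trsl{A}_{\mode\lor\modeI}\simeq\ilExists_{\var:\trsl{A}_{\modeI}}\trsl{A}^{\neord[1]}(\var)\) makes a path decompose into a path in \(\trsl{A}_{\modeI}\) together with a path over it in the fiber of \(\trsl{A}^{\neord[1]}\); that the first piece is \(\modeI\)-modal and the second \(\mode\)-modal is exactly lex-ness of the two modalities, and matching the pieces against the logical-relation translation of identity types closes the case.

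The main obstacle is the \emph{dependent function} case. After replacing \(A\) by its fracture and currying, the total type of \(\ilForall_{\var:A}B\) takes the form
\[
  \ilForall_{a:\trsl{A}_{\modeI}}\ilForall_{a':\trsl{A}^{\neord[1]}(a)}\ilExists_{b:\trsl{B}_{\modeI}(a)}\trsl{B}^{\neord[1]}(a,a',b),
\]
and I must pull \(\ilExists_{b:\trsl{B}_{\modeI}(a)}\) out past \(\ilForall_{a':\trsl{A}^{\neord[1]}(a)}\). This is precisely where \(\mode\le{}^{\orthMark}\modeI\) is used: the first projection \(a'\mapsto\fst\) of any such dependent function is a map from the \(\mode\)-modal type \(\trsl{A}^{\neord[1]}(a)\) into the \(\modeI\)-modal type \(\trsl{B}_{\modeI}(a)\), so by the orthogonality equivalence \((\trsl{A}^{\neord[1]}(a)\to\trsl{B}_{\modeI}(a))\simeq\trsl{B}_{\modeI}(a)\) it is forced to be constant at a unique \(b_{0}\); this yields \(\ilForall_{a'}\ilExists_{b}\trsl{B}^{\neord[1]}\simeq\ilExists_{b_{0}}\ilForall_{a'}\trsl{B}^{\neord[1]}(a,a',b_{0})\). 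Commuting \(\ilExists_{b_{0}}\) back out through \(\ilForall_{a:\trsl{A}_{\modeI}}\) by the (definitional) type-theoretic axiom of choice then produces \(\ilExists_{f:\ilForall_{a}\trsl{B}_{\modeI}(a)}\ilForall_{a}\ilForall_{a'}\trsl{B}^{\neord[1]}(a,a',f(a))\), which is exactly the total type of the logical relation of \(\ilForall_{\var:A}B\); here \(\ilForall_{a}\trsl{B}_{\modeI}(a)\) is \(\modeI\)-modal and the relational part \(\mode\)-modal by closure of the modal universes under dependent products.

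The remaining difficulty is coherence rather than a further hard equivalence: I must check that the equivalences built in each case are natural in the fractured context, so that the compositions performed in the \(\ilExists\)- and \(\ilForall\)-cases are legitimate, and that the final map agrees with the logical-relation translation on the nose rather than up to an unnamed equivalence. I expect this to be tedious but mechanical, handled by carrying every step as an equivalence of families over the fractured context and appealing to the naturality of the equivalence in \cref{fact:fracture-and-gluing}.
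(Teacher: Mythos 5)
Your proposal is correct and takes essentially the same route as the paper, which proves the theorem by case analysis on \(\metaty\) and defers the per-constructor verifications to the cited reference; your handling of the dependent function case—the type-theoretic axiom of choice followed by the orthogonality equivalence \(\tyI \simeq (\ty \to \tyI)\) to collapse the choice function to a point of \(\tyI\)—is precisely the key computation there. One minor correction: in the identity-type case the modality of the two path components follows from closure of modal types under identity types, which holds for every modality, so lex-ness is not what is being used at that step.
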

\begin{proof}
  By case analysis on \(\metaty\); see \parencite[Section
  4.2]{uemura2022diagrams-arxiv} for how the equivalence in
  \cref{fact:fracture-and-gluing} interacts with each type
  constructor.
\end{proof}

When \(\metaty\) is the construction of the type of a certain
(possibly higher-order) structure, then
\(\trsl{\metaty}^{\neord[1]}_{\ilUniv_{\modeI},
  \ilUniv_{\mode}}(\var)\) is considered as the type of
\emph{displayed} structures over
\(\var : \trsl{\metaty}_{\ilUniv_{\modeI}}\)
\parencite[cf.][]{ahrens2017displayed}. Thus,
\cref{eq:fracture-structure-1} asserts that any
\((\mode \lor \modeI)\)-modal structure is fractured into a
\(\modeI\)-modal structure and a \(\mode\)-modal displayed structure
over it.

The logical relation translation is parameterized by \emph{inverse
  categories}, categories in which there is no infinite ascending
chain of non-identity morphisms. For every inverse category
\(\idxsh\), we can define a syntactic translation
\(\trsl{\blank}^{\idxsh}\) which is a syntactic counterpart of the
semantics in inverse diagrams \parencite{shulman2015inverse}. Any mode
sketch \(\modesketch\) is in particular an inverse category, and thus
we have a theorem analogous to \cref{thm:higher-order-fracture}: for
any \(\mode : \modesketch \to \ilLAM\) satisfying
\cref{axm:mode-sketch-orthogonal},
\(\trsl{\metaty}_{\ilUniv_{\biglor_{\idx : \modesketch}\mode(\idx)}}\)
is fractured in a certain way.

\section{Normal forms}
\label{sec:normal-forms-1}

From now on, let us fix a cellular \(\infty\)-type theory \(\workingtth\), and let
\(\workingInitModel\) be the initial model of \(\workingtth\). We explain a
\emph{strategy} to solve \cref{prb:coherence-problem} using a
\emph{normalization} technique. The strategy may or may not work
depending on (the presentation of) the \(\infty\)-type theory
\(\workingtth\). Heuristically, it works when all the equations in the
presentation of the \(\infty\)-type theory have clear computational
meaning.

Each of
\cref{sec:normal-forms-1,sec:normalization-models,sec:norm-results,sec:coherence-theorem}
is organized as follows. We first construct a working
\(\infty\)-logos. We then list relevant axioms satisfied by the working
\(\infty\)-logos. We work in type theory to define some concepts, construct
some objects, and prove some propositions, assuming some of the axioms
when needed. Optionally, we externalize the results to obtain results
in the metatheory. Paragraphs marked `Task' explain what we have to do
for each individual \(\infty\)-type theory. In the last subsection, we
demonstrate how the tasks are performed for a simple example of
\(\infty\)-type theory.

The first step is to construct spaces of \emph{normal forms}.

\subsection{Semantic setup}
\label{sec:semantic-setup}

Normal forms are \emph{not} stable under arbitrary substitution, but
they are stable under \emph{renaming of variables}. Therefore, they
should live in the \(\infty\)-logos of right fibrations over the
\((\infty, 1)\)-category of contexts and renamings rather than all
substitutions. The following definition in the \(1\)-categorical case
is due to \textcite{bocquet2021relative}.

\begin{definition}
  Let \(\model\) be a model of \(\basicTT\). A \emph{renaming model
    over \(\model\)} is a model \(\modelI\) of \(\basicTT\) equipped
  with a morphism \(\fun : \modelI \to \model\) such that the map
  \(\Ty(\modelI) \to \fun_{\Ctx}^{\pbMark} \Ty(\model)\) is an
  equivalence.
\end{definition}

This definition is essentially algebraic. In particular, the initial
renaming model \(\Ren_{\model}\) over
\(\model\) exists. By definition, \(\Ren_{\model}\) has the same types
as \(\model\), but the initiality of \(\Ren_{\model}\) means that it
contains only trivial terms. Since \(\basicTT\) does not have any type
or term constructor, the only trivial terms are variables.

For the definition of normal forms and the construction of the
normalization model in \cref{sec:normalization-models}, the initiality
of \(\workingInitModel\) or \(\Ren_{\model}\) is not
necessary. Although we do not need much generality, let us work with
an arbitrary renaming model \(\fun : \modelI \to \model\) over an
arbitrary model \(\model\) of \(\workingtth\) to simplify arguments by
forgetting about special properties of the initial models. Our first
working \(\infty\)-logos \(\workingLogos_{\fun}\) is
\(\Fun(\neord[1], \RFib)_{\fun_{\Ctx}}\).

\subsection{Axiomatic setup}
\label{sec:axiomatic-setup}

The \(\infty\)-logos \(\workingLogos_{\fun}\) satisfies several axioms. Let
\(\workingMS\) be the mode sketch
\begin{math}
  \{0 \to 1\}.
\end{math}
Note that there is no thin triangle as there is no triangle. By
\cref{thm:mode-sketch-model}, \(\workingLogos_{\fun}\) satisfies the
associated mode sketch axioms.

\begin{axiom}
  \label{axm:working-logos-1-mode-sketch}
  A function \(\mode : \workingMS \to \ilLAM\) satisfying
  \cref{axm:mode-sketch-orthogonal,axm:mode-sketch-thin} is
  postulated. We define \(\modeRen \defeq \mode(0)\) and
  \(\modeSyn \defeq \mode(1)\).
\end{axiom}

The morphism \(\fun : \workingtth \to \workingLogos_{\fun}\) corresponds to a
\((\modeRen \lor \modeSyn)\)-modal type-term structure in the internal
language of \(\workingLogos_{\fun}\). Its \(\ilTy\)-component is
\(\modeSyn\)-modal since \(\fun\) is a renaming model. It is then
fractured into the following structures
(\cref{axm:working-logos-1-type-term-structure,axm:working-logos-1-variable}).

\begin{axiom}
  \label{axm:working-logos-1-type-term-structure}
  Presuppose \cref{axm:working-logos-1-mode-sketch}. A
  \(\modeSyn\)-modal \(\workingtth\)-type-term structure \(\workingTyTmStr\)
  is postulated.
\end{axiom}

\begin{definition}
  Let \(\tytmstr\) be a type-term structure. A \emph{variable
    structure on \(\tytmstr\)} is a type family
  \begin{math}
    \ilIsVar :
    \ilForall_{\implicit{\ty : \tytmstr}}
    \ty
    \to \ilUniv.
  \end{math}
\end{definition}

\begin{axiom}
  \label{axm:working-logos-1-variable}
  Presuppose
  \cref{axm:working-logos-1-mode-sketch,axm:working-logos-1-type-term-structure}. A
  \(\modeRen\)-modal variable structure \(\ilIsVar\) on
  \(\workingTyTmStr\) is postulated.
\end{axiom}

\subsection{Normal forms}
\label{sec:normal-forms}

We work in type theory.

\begin{definition}
  \label{def:normal-form-predicate}
  Let \(\tytmstr\) be a type-term structure. A \emph{normal form
    predicate \(\nfalg\) on \(\tytmstr\)} consists of the following
  data.
  \begin{align*}
    \begin{autobreak}
      \MoveEqLeft
      \ilIsNfTy_{\nfalg} :
      \tytmstr
      \to \ilUniv
    \end{autobreak}
    \\
    \begin{autobreak}
      \MoveEqLeft
      \ilIsNfTm_{\nfalg} :
      \ilForall_{\implicit{\ty : \tytmstr}}
      \ty
      \to \ilUniv
    \end{autobreak}
    \\
    \begin{autobreak}
      \MoveEqLeft
      \ilIsNeTm_{\nfalg} :
      \ilForall_{\implicit{\ty : \tytmstr}}
      \ty
      \to \ilUniv
    \end{autobreak}
  \end{align*}
\end{definition}

\begin{definition}
  \label{def:normal-form-predicate-variable}
  Let \(\tytmstr\) be a type-term structure equipped with a variable
  structure and let \(\nfalg\) be a normal form predicate on
  \(\tytmstr\). We say \(\nfalg\) \emph{supports variables} if it is
  equipped with the following operator.
  \begin{align*}
    \begin{autobreak}
      \MoveEqLeft
      \ilVarToNe :
      \ilForall_{\implicit{\ty : \tytmstr}}
      \ilForall_{\implicit{\var : \ty}}
      \ilIsVar(\var)
      \to \ilIsNeTm_{\nfalg}(\var)
    \end{autobreak}
  \end{align*}
\end{definition}

The definitions of normal forms and neutral terms depend on the
presentation of the cellular \(\infty\)-type theory \(\workingtth\).

\begin{task}
  \label{task:normal-form-algebra}
  Let \(\tytmstr\) be a \(\workingtth\)-type-term structure equipped with a
  variable structure. Extend \cref{def:normal-form-predicate} to a
  suitable notion of a \emph{\(\workingtth\)-normal form algebra}. One must
  define it as a signature for mutual inductive type families so that
  the initial \(\workingtth\)-normal form algebra supporting variables
  exists.
\end{task}

\begin{construction}
  Assume
  \cref{axm:working-logos-1-mode-sketch,axm:working-logos-1-type-term-structure,axm:working-logos-1-variable}. Combining
  the signature for \(\workingtth\)-normal form algebra and the nullification
  technique of \textcite[Section 2.2]{rijke2020modalities}, one can
  construct \(\workingNfalg\) the initial \(\modeRen\)-modal
  \(\workingtth\)-normal form algebra supporting variables.
\end{construction}

\subsection{Running example}
\label{sec:running-example}

We use an \(\infty\)-type theory \(\basicTT_{\FunMark}\) with function
types and a natural numbers type as a running example to demonstrate
how to perform the tasks in the normalization
strategy. \(\basicTT_{\FunMark}\) also includes constants
\(\ilConstTy\), \(\ilConstTyI\), and \(\ilConstTm\) by which we
explain how normalization works not only for the initial model but
also free models.

The notion of a \emph{\(\basicTT_{\FunMark}\)-type-term structure}
extends the notion of a type-term structure \(\tytmstr\) by the
following operators.
\begin{align*}
  \begin{autobreak}
    \MoveEqLeft
    \ilFun :
    \ilForall_{\ty : \tytmstr}
    (\ty \to \tytmstr)
    \to \tytmstr
  \end{autobreak}
  \\
  \begin{autobreak}
    \MoveEqLeft
    \ilabs :
    \ilForall_{\implicit{\ty : \tytmstr}}
    \ilForall_{\implicit{\tyI : \ty \to \tytmstr}}
    (\ilForall_{\tm : \ty}\tyI(\tm)) \simeq \ilFun(\ty, \tyI)
  \end{autobreak}
  \\
  \begin{autobreak}
    \MoveEqLeft
    \ilNat :
    \tytmstr
  \end{autobreak}
  \\
  \begin{autobreak}
    \MoveEqLeft
    \ilzero :
    \ilNat
  \end{autobreak}
  \\
  \begin{autobreak}
    \MoveEqLeft
    \ilsucc :
    \ilNat
    \to \ilNat
  \end{autobreak}
  \\
  \begin{autobreak}
    \MoveEqLeft
    \ilind_{\ilNat} :
    \ilForall_{\nat : \ilNat}
    \ilForall_{\ty : \ilNat \to \tytmstr}
    \ty(\ilzero)
    \to (\ilForall_{\var}\ty(\var) \to \ty(\ilsucc(\var)))
    \to \ty(\nat)
  \end{autobreak}
  \\
  \begin{autobreak}
    \MoveEqLeft
    \ilbeta_{\ilzero} :
    \ilForall_{\ty}
    \ilForall_{\tm_{\zeroMark}}
    \ilForall_{\tm_{\succMark}}
    \ilind_{\ilNat}(\ilzero, \ty, \tm_{\zeroMark}, \tm_{\succMark}) = \tm_{\zeroMark}
  \end{autobreak}
  \\
  \begin{autobreak}
    \MoveEqLeft
    \ilbeta_{\ilsucc} :
    \ilForall_{\nat}
    \ilForall_{\ty}
    \ilForall_{\tm_{\zeroMark}}
    \ilForall_{\tm_{\succMark}}
    \ilind_{\ilNat}(\ilsucc(\nat), \ty, \tm_{\zeroMark}, \tm_{\succMark}) = \tm_{\succMark}(\blank, \ilind_{\ilNat}(\nat, \ty, \tm_{\zeroMark}, \tm_{\succMark}))
  \end{autobreak}
  \\
  \begin{autobreak}
    \MoveEqLeft
    \ilConstTy :
    \tytmstr
  \end{autobreak}
  \\
  \begin{autobreak}
    \MoveEqLeft
    \ilConstTyI :
    \ilConstTy
    \to \tytmstr
  \end{autobreak}
  \\
  \begin{autobreak}
    \MoveEqLeft
    \ilConstTm :
    \ilForall_{\tm : \ilConstTy}
    \ilConstTyI(\tm)
  \end{autobreak}
\end{align*}
We define \(\ilapp(\map, \tm) \defeq \ilabs^{-1}(\map)(\tm)\) for
\(\map : \ilFun(\ty, \tyI)\) and \(\tm : \ty\).

\paragraph{\Cref{task:normal-form-algebra}}

The notion of a \emph{\(\basicTT_{\FunMark}\)-normal form algebra}
extends the notion of a normal form predicate \(\nfalg\) on
\(\tytmstr\) by the following operators.
\begin{align*}
  \begin{autobreak}
    \MoveEqLeft
    \ilFun_{\nftyMark} :
    \ilForall_{\implicit{\ty : \tytmstr}}
    \ilIsNfTy_{\nfalg}(\ty)
    \to \ilForall_{\implicit{\tyI : \ty \to \tytmstr}}
    (\ilForall_{\var : \ty}\ilIsVar(\var) \to \ilIsNfTy_{\nfalg}(\tyI(\var)))
    \to \ilIsNfTy_{\nfalg}(\ilFun(\ty, \tyI))
  \end{autobreak}
  \\
  \begin{autobreak}
    \MoveEqLeft
    \ilabs_{\nftmMark} :
    \ilForall_{\implicit{\ty : \tytmstr}}
    \ilForall_{\implicit{\tyI : \ty \to \tytmstr}}
    \ilForall_{\implicit{\tmI : \ilForall_{\tm}\tyI(\tm)}}
    (\ilForall_{\var : \ty}\ilIsVar(\var) \to \ilIsNfTm_{\nfalg}(\tmI(\var)))
    \to \ilIsNfTm_{\nfalg}(\ilabs(\tmI))
  \end{autobreak}
  \\
  \begin{autobreak}
    \MoveEqLeft
    \ilapp_{\netmMark} :
    \ilForall_{\implicit{\ty : \tytmstr}}
    \ilForall_{\implicit{\tyI : \ty \to \tytmstr}}
    \ilForall_{\implicit{\map : \ilFun(\ty, \tyI)}}
    \ilIsNeTm_{\nfalg}(\map)
    \to \ilForall_{\implicit{\tm : \ty}}
    \ilIsNfTm_{\nfalg}(\tm)
    \to \ilIsNeTm_{\nfalg}(\ilapp(\map, \tm))
  \end{autobreak}
  \\
  \begin{autobreak}
    \MoveEqLeft
    \ilNat_{\nftyMark} :
    \ilIsNfTy_{\nfalg}(\ilNat)
  \end{autobreak}
  \\
  \begin{autobreak}
    \MoveEqLeft
    \ilzero_{\nftmMark} :
    \ilIsNfTm_{\nfalg}(\ilzero)
  \end{autobreak}
  \\
  \begin{autobreak}
    \MoveEqLeft
    \ilsucc_{\nftmMark} :
    \ilForall_{\implicit{\nat}}
    \ilIsNfTm_{\nfalg}(\nat)
    \to \ilIsNfTm_{\nfalg}(\ilsucc(\nat))
  \end{autobreak}
  \\
  \begin{autobreak}
    \MoveEqLeft
    \ilNeToNf_{\ilNat} :
    \ilForall_{\implicit{\nat : \ilNat}}
    \ilIsNeTm_{\nfalg}(\nat)
    \to \ilIsNfTm_{\nfalg}(\nat)
  \end{autobreak}
  \\
  \begin{autobreak}
    \MoveEqLeft
    \ilind_{\ilNat, \netmMark} :
    \ilForall_{\implicit{\nat}}
    \ilIsNeTm_{\nfalg}(\nat)
    \to \ilForall_{\implicit{\ty}}
    (\ilForall_{\var}\ilIsVar(\var) \to \ilIsNfTy_{\nfalg}(\ty(\var)))
    \to \ilForall_{\implicit{\tm_{\zeroMark}}}
    \ilIsNfTm_{\nfalg}(\tm_{\zeroMark})
    \to \ilForall_{\implicit{\tm_{\succMark}}}
    (\ilForall_{\var}\ilIsVar(\var) \to \ilForall_{\varI}\ilIsVar(\varI) \to \ilIsNfTm_{\nfalg}(\tm_{\succMark}(\var, \varI)))
    \to \ilIsNeTm_{\nfalg}(\ilind_{\ilNat}(\nat, \ty, \tm_{\zeroMark}, \tm_{\succMark}))
  \end{autobreak}
  \\
  \begin{autobreak}
    \MoveEqLeft
    \ilConstTy_{\nftyMark} :
    \ilIsNfTy_{\nfalg}(\ilConstTy)
  \end{autobreak}
  \\
  \begin{autobreak}
    \MoveEqLeft
    \ilNeToNf_{\ilConstTy} :
    \ilForall_{\implicit{\tmII : \ilConstTy}}
    \ilIsNeTm_{\nfalg}(\tmII)
    \to \ilIsNfTm_{\nfalg}(\tmII)
  \end{autobreak}
  \\
  \begin{autobreak}
    \MoveEqLeft
    \ilConstTyI_{\nftyMark} :
    \ilForall_{\implicit{\tmII}}
    \ilIsNfTm_{\nfalg}(\tmII)
    \to \ilIsNfTy_{\nfalg}(\ilConstTyI(\tmII))
  \end{autobreak}
  \\
  \begin{autobreak}
    \MoveEqLeft
    \ilNeToNf_{\ilConstTyI} :
    \ilForall_{\implicit{\tmII}}
    \ilIsNfTm_{\nfalg}(\tmII)
    \to \ilForall_{\implicit{\tmIII : \ilConstTyI(\tmII)}}
    \ilIsNeTm_{\nfalg}(\tmIII)
    \to \ilIsNfTm_{\nfalg}(\tmIII)
  \end{autobreak}
  \\
  \begin{autobreak}
    \MoveEqLeft
    \ilConstTm_{\netmMark} :
    \ilForall_{\implicit{\tmII}}
    \ilIsNfTm_{\nfalg}(\tmII)
    \to \ilIsNeTm_{\nfalg}(\ilConstTm(\tmII))
  \end{autobreak}
\end{align*}
The definition of normal forms and neutral terms for an inductive type
fits the following pattern: the type is in normal form if its
arguments are all in normal form (\(\ilNat_{\nftyMark}\)); a
constructor is in normal form if its arguments are all in normal form
(\(\ilzero_{\nftyMark}\) and \(\ilsucc_{\nftmMark}\)); any neutral
term of the type is in normal form (\(\ilNeToNf_{\ilNat}\)); the
eliminator is neutral if its head argument is neutral and the rest of
the arguments are all in normal form (\(\ilind_{\ilNat,
  \netmMark}\)). The pattern for normal forms and neutral terms for
constants is as follows: a type constant is in normal form if its
arguments are all in normal form (\(\ilConstTy_{\nftyMark}\) and
\(\ilConstTyI_{\nftyMark}\)); any neutral term of a type constant is
in normal form (\(\ilNeToNf_{\ilConstTy}\) and
\(\ilNeToNf_{\ilConstTyI}\)); a term constant is neutral if its
arguments are all in normal form (\(\ilConstTm_{\netmMark}\)).

\begin{remark}
  There are several choices of arguments of constructors. For example,
  some authors \parencite[e.g.][]{coquand2019canonicity} choose
  \begin{align*}
    \begin{autobreak}
      \MoveEqLeft
      \ilabs_{\nftmMark}' :
      \ilForall_{\implicit{\ty : \tytmstr}}
      \ilIsNfTy_{\nfalg}(\ty)
      \to \ilForall_{\implicit{\tyI : \ty \to \tytmstr}}
      (\ilForall_{\var}\ilIsVar(\var) \to \ilIsNfTy_{\nfalg}(\tyI(\var)))
      \to \ilForall_{\implicit{\tmI : \ilForall_{\tm}\tyI(\tm)}}
      (\ilForall_{\var}\ilIsVar(\var) \to \ilIsNfTm_{\nfalg}(\tmI(\var)))
      \to \ilIsNfTm_{\nfalg}(\ilabs(\tmI))
    \end{autobreak}
  \end{align*}
  instead of our \(\ilabs_{\nftmMark}\). This difference is not
  significant: we will prove in \cref{sec:norm-results} the
  contractibility of \(\ilIsNfTy_{\nfalg}(\ty)\) and
  \(\ilIsNfTm_{\nfalg}(\tm)\), so all variants are equivalent as long
  as the proof works. In principle, we prefer fewer arguments for
  simplicity. An exception is the \(\ilNeToNf_{\ilConstTyI}\)
  constructor which takes a seemingly redundant argument
  \(\tmII_{\nftmMark} : \ilIsNfTm_{\nfalg}(\tmII)\). The reason for
  this choice is that in some proof by induction given in
  \cref{sec:coherence-theorem} we use the induction hypothesis for
  \(\tmII_{\nftmMark}\). We do not know whether this is really a
  necessary choice or there is a way to avoid using the induction
  hypothesis for \(\tmII_{\nftmMark}\).
\end{remark}

\section{Normalization models}
\label{sec:normalization-models}

We construct a \emph{logical relation} for normalization and then a
\emph{normalization model}. We continue working with the
\(\infty\)-logos \(\workingLogos_{\fun}\) introduced in
\cref{sec:normal-forms-1}.

\subsection{Normalization structures}
\label{sec:norm-struct}

We work in type theory. The notion of a \emph{displayed type-term
  structure} is defined by the logical relation translation of the
notion of a type-term structure with respect to the inverse category
\(\neord[1]\). For the record, let us unwind the definition.

\begin{definition}
  Let \(\tytmstr\) be a type-term structure. A \emph{displayed
    type-term structure \(\tytmstrI\) over \(\tytmstr\)} consists of
  the following data.
  \begin{align*}
    \begin{autobreak}
      \MoveEqLeft
      \ilTy^{\neord[1]}_{\tytmstrI} :
      \tytmstr
      \to \ilUniv
    \end{autobreak}
    \\
    \begin{autobreak}
      \MoveEqLeft
      \ilTm^{\neord[1]}_{\tytmstrI} :
      \ilForall_{\ty : \tytmstr}
      \ilTy^{\neord[1]}_{\tytmstrI}(\ty)
      \to \ty
      \to \ilUniv
    \end{autobreak}
  \end{align*}
  We regard \(\tytmstrI \mapsto \ilTy^{\neord[1]}_{\tytmstrI}\) as an
  implicit coercion from displayed type-term structures to
  \(\tytmstr \to \ilUniv\) and \(\ilTm^{\neord[1]}_{\tytmstrI}\) as an
  implicit coercion from \(\ilTy^{\neord[1]}_{\tytmstr}(\ty)\) to
  \(\ty \to \ilUniv\). When \(\tytmstr\) is a \(\workingtth\)-type-term
  structure, the notion of a \emph{displayed \(\workingtth\)-type-term
    structure over \(\tytmstr\)} is also defined.
\end{definition}

A displayed \(\workingtth\)-type-term structure is thought of as a
(proof-relevant) \emph{logical relation}. The \emph{fundamental lemma
  of logical relations} asserts that a model and a logical relation on
it are glued together to yield another model. We achieve this
internally as a special case of \cref{thm:higher-order-fracture}.

\begin{corollary}
  \label{cor:fracture-and-gluing-type-term-structure}
  Let \(\mode\) and \(\modeI\) be {\acrLAMs} such that
  \(\mode \le {}^{\orthMark} \modeI\). Then the type of
  \((\mode \lor \modeI)\)-modal \(\workingtth\)-type-term structures is
  equivalent to the type of pairs \((\tytmstr, \tytmstrI)\) consisting
  of a \(\modeI\)-modal \(\workingtth\)-type-term structure \(\tytmstr\) and
  a \(\mode\)-modal displayed \(\workingtth\)-type-term structure
  \(\tytmstrI\) over \(\tytmstr\) \qed
\end{corollary}

We consider a logical relation that interacts with normal forms.

\begin{definition}
  \label{def:normalization-structure}
  Let \(\tytmstrI\) be a displayed type-term structure over a
  type-term structure \(\tytmstr\), and let \(\nfalg\) be a normal
  form predicate on \(\tytmstr\). A \emph{\(\nfalg\)-normalization
    structure on \(\tytmstrI\)} consists of the following data.
  \begin{align*}
    \begin{autobreak}
      \MoveEqLeft
      \ilnfty :
      \ilForall_{\implicit{\ty : \tytmstr}}
      \tytmstrI(\ty)
      \to \ilIsNfTy_{\nfalg}(\ty)
    \end{autobreak}
    \\
    \begin{autobreak}
      \MoveEqLeft
      \ilreify :
      \ilForall_{\implicit{\ty : \tytmstr}}
      \ilForall_{\tyI : \tytmstrI(\ty)}
      \ilForall_{\implicit{\tm : \ty}}
      \tyI(\tm)
      \to \ilIsNfTm_{\nfalg}(\tm)
    \end{autobreak}
    \\
    \begin{autobreak}
      \MoveEqLeft
      \ilreflect :
      \ilForall_{\implicit{\ty : \tytmstr}}
      \ilForall_{\tyI : \tytmstrI(\ty)}
      \ilForall_{\implicit{\tm : \ty}}
      \ilIsNeTm_{\nfalg}(\tm)
      \to \tyI(\tm)
    \end{autobreak}
  \end{align*}
\end{definition}

\begin{construction}
  \label{cst:var-to-term}
  Let \(\tytmstr\) be a type-term structure equipped with a variable
  structure, \(\nfalg\) a normal form predicate on \(\tytmstr\)
  supporting variables, and \(\tyI\) a displayed type-term structure
  over \(\tytmstr\) equipped with a \(\nfalg\)-normalization
  structure. We define
  \begin{align*}
    \begin{autobreak}
      \MoveEqLeft
      \ilVarToTm :
      \ilForall_{\implicit{\ty : \tytmstr}}
      \ilForall_{\tyI : \tytmstrI(\ty)}
      \ilForall_{\implicit{\var : \ty}}
      \ilIsVar(\var)
      \to \tyI(\var)
    \end{autobreak}
    \\
    \begin{autobreak}
      \MoveEqLeft
      \ilVarToTm(\tyI, \var_{\varMark}) \defeq
      \ilreflect(\tyI, \ilVarToNe(\var_{\varMark})).
    \end{autobreak}
  \end{align*}
\end{construction}

A normalization structure should follow certain computation rules,
depending on the presentation of \(\workingtth\).

\begin{task}
  \label{task:normalization-structure}
  Let \(\tytmstr\) be a \(\workingtth\)-type-term structure equipped with a
  variable structure, \(\nfalg\) a \(\workingtth\)-normal form algebra on
  \(\tytmstr\) supporting variables, and \(\tytmstrI\) a displayed
  \(\workingtth\)-type-term structure over \(\tytmstr\). Extend
  \cref{def:normalization-structure} to a suitable notion of a
  \(\nfalg\)-normalization structure on \(\tytmstrI\) \emph{supporting
    \(\workingtth\)}. This is usually done by specifying how
  \(\ilnfty(\tyI)\), \(\ilreify(\tyI, \tmI)\), and
  \(\ilreflect(\tyI, \tm_{\neMark})\) are computed by case analysis on
  \(\tyI : \tytmstrI(\ty)\) and \(\tmI : \tyI(\tm)\).
\end{task}

\begin{construction}
  Let \(\mode\) a {\acrLAM}, \(\tytmstr\) be a type-term structure,
  and \(\nfalg\) a normalization predicate on \(\tytmstr\). We define
  a displayed type-term structure \(\nmtytmstr_{\mode, \nfalg}\) over
  \(\tytmstr\) as follows.
  \begin{align*}
    \begin{autobreak}
      \MoveEqLeft
      \ilTy^{\neord[1]}_{\nmtytmstr_{\mode, \nfalg}}(\ty) \defeq
      \ilExists_{\tyI : \ty \to \ilUniv_{\mode}}
      (\ilIsNfTy_{\nfalg}(\ty)
      \times (\ilForall_{\implicit{\tm : \ty}}\tyI(\tm) \to \ilIsNfTm_{\nfalg}(\tm))
      \times (\ilForall_{\implicit{\tm : \ty}}\ilIsNeTm_{\nfalg}(\tm) \to \tyI(\tm)))
    \end{autobreak}
    \\
    \begin{autobreak}
      \MoveEqLeft
      \ilTm^{\neord[1]}_{\nmtytmstr_{\mode, \nfalg}}(\ty, (\tyI, \dots), \tm) \defeq
      \tyI(\tm)
    \end{autobreak}
  \end{align*}
  By construction, \(\nmtytmstr_{\mode, \nfalg}\) is equipped with a
  canonical \(\nfalg\)-normalization structure. If \(\nfalg\) is
  \(\mode\)-modal, then \(\nmtytmstr_{\mode, \nfalg}\) is
  \(\mode\)-modal by the closure properties of modal types.
\end{construction}

\begin{task}
  \label{task:normalization-model}
  Let \(\mode\) be a {\acrLAM}, \(\tytmstr\) a \(\workingtth\)-type-term
  structure equipped with a variable structure, and \(\nfalg\) a
  \(\workingtth\)-normalization algebra on \(\tytmstr\) supporting
  variables. Extend \(\nmtytmstr_{\mode, \nfalg}\) to a displayed
  \(\workingtth\)-type-term structure over \(\tytmstr\) and prove that the
  \(\nfalg\)-normalization structure on \(\nmtytmstr_{\mode, \nfalg}\)
  supports \(\workingtth\).
\end{task}

\subsection{The normalization model}
\label{sec:normalization-model}

Suppose that we have done
\cref{task:normalization-structure,task:normalization-model}, and
assume
\cref{axm:working-logos-1-mode-sketch,axm:working-logos-1-type-term-structure,axm:working-logos-1-variable}
in type theory. Then \(\nmtytmstr_{\modeRen, \workingNfalg}\) is a
\(\modeRen\)-modal displayed \(\workingtth\)-type-term structure over
\(\workingTyTmStr\) whose \(\workingNfalg\)-normalization structure
supports \(\workingtth\) by \cref{task:normalization-model}. Let
\(\workingNmTyTmStr\) be the \((\modeRen \lor \modeSyn)\)-modal
\(\workingtth\)-type-term structure corresponding to
\((\workingTyTmStr, \nmtytmstr_{\modeRen, \workingNfalg})\) by
\cref{cor:fracture-and-gluing-type-term-structure}.

Going back outside, \(\workingNmTyTmStr\) corresponds to an
interpretation
\begin{math}
  \workingNmItpr_{\fun} : \workingtth \to \workingLogos_{\fun}.
\end{math}
We externalize \(\workingNmItpr_{\fun}\) keeping a connection to
\(\fun\). We have the full embedding
\(\workingRenIncNm_{\fun} : \Ctx(\modelI) \to \workingLogos_{\fun}\)
sending \(\ctx\) to the functor
\(\Ctx(\modelI)_{/ \ctx} \to \Ctx(\model)_{/ \fun_{\Ctx}(\ctx)}\) over
\(\fun_{\Ctx}\). Let
\(\Ctx(\workingNmModel_{\fun}) \subset \workingLogos_{\fun}\) be the
smallest full subcategory containing \(\Ctx(\modelI)\) and closed
under \(\workingNmItpr_{\fun}\)-context extension. Take the
externalization
\(\workingNmModel_{\fun} =
\extern{\workingNmItpr_{\fun}}_{\Ctx(\workingNmModel_{\fun})}\) and
call it the \emph{normalization model} associated to \(\fun\). The
projection \(\workingLogos_{\fun} \to \RFib_{\Ctx(\model)}\) restricts
to \(\Ctx(\workingNmModel_{\fun}) \to \Ctx(\model)\) and induces a
morphism
\begin{math}
  \workingNmProj_{\fun} : \workingNmModel_{\fun} \to \model
\end{math}
of models of \(\workingtth\). The base change along the inclusion
\(\workingRenIncNm_{\fun} : \Ctx(\modelI) \to
\Ctx(\workingNmModel_{\fun})\) induces a functor
\begin{math}
  \workingRenIncNm_{\fun}^{\pbMark} :
  \Fun(\neord[1], \RFib)_{(\workingNmProj_{\fun})_{\Ctx}}
  \to \workingLogos_{\fun}.
\end{math}
By construction, the composite
\begin{equation*}
  \workingtth
  \xrightarrow{\workingNmProj_{\fun}}
  \Fun(\neord[1], \RFib)_{(\workingNmProj_{\fun})_{\Ctx}}
  \xrightarrow{\workingRenIncNm_{\fun}^{\pbMark}}
  \workingLogos_{\fun}
\end{equation*}
is equivalent to
\(\workingNmItpr_{\fun} : \workingtth \to \workingLogos_{\fun}\). Furthermore,
by \cref{cst:var-to-term}, we have a map
\begin{equation*}
  \Ty(\workingNmItpr_{\fun}) \times_{\Ty(\model)} \Tm(\fun)
  \to \Tm(\workingNmItpr_{\fun})
\end{equation*}
in
\((\workingLogos_{\fun})_{/ \Ty(\workingNmItpr_{\fun}) \times_{\Ty(\model)}
  \Tm(\model)}\). This corresponds to a map
\begin{equation*}
  \workingVarToTm_{\fun} :
  \workingRenIncNm_{\fun}^{\pbMark} \Ty(\workingNmModel_{\fun})
  \times_{\fun_{\Ctx}^{\pbMark} \Ty(\model)}
  \Tm(\modelI)
  \to \workingRenIncNm_{\fun}^{\pbMark} \Tm(\workingNmModel_{\fun})
\end{equation*}
in
\((\RFib_{\Ctx(\modelI)})_{/ \workingRenIncNm_{\fun}^{\pbMark}
  \Ty(\workingNmModel_{\fun}) \times_{\fun_{\Ctx}^{\pbMark} \Ty(\model)}
  \fun_{\Ctx}^{\pbMark} \Tm(\model)}\).

\subsection{Running example}
\label{sec:function-types}

We consider the case when \(\workingtth = \basicTT_{\FunMark}\). We
first note that a \emph{displayed \(\basicTT_{\FunMark}\)-type-term
  structure on \(\tytmstr_{1}\)} is a displayed type-term structure
\(\tytmstr_{0}\) on \(\tytmstr_{1}\) equipped with the following
operators.
\begin{align*}
  \begin{autobreak}
    \MoveEqLeft
    \ilFun^{\neord[1]} :
    \ilForall_{\implicit{\ty_{1}}}
    \ilForall_{\ty_{0} : \tytmstr_{0}(\ty_{1})}
    \ilForall_{\implicit{\tyI_{1} : \ty_{1} \to \tytmstr_{1}}}
    (\ilForall_{\tm_{1}}\ty_{0}(\tm_{1}) \to \tytmstr_{0}(\tyI_{1}(\tm_{1})))
    \to \tytmstr_{0}(\ilFun(\ty_{1}, \tyI_{1}))
  \end{autobreak}
  \\
  \begin{autobreak}
    \MoveEqLeft
    \ilabs^{\neord[1]} :
    \ilForall_{\implicit{\ty_{1}}}
    \ilForall_{\implicit{\ty_{0} : \tytmstr_{0}(\ty_{1})}}
    \ilForall_{\implicit{\tyI_{1} : \ty_{1} \to \tytmstr_{1}}}
    \ilForall_{\implicit{\tyI_{0} : \ilForall_{\tm_{1}}\ty_{0}(\tm_{1}) \to \tytmstr_{0}(\tyI_{1}(\tm_{1}))}}
    \ilForall_{\implicit{\tmI_{1} : \ilForall_{\tm_{1}}\tyI_{1}(\tm_{1})}}
    (\ilForall_{\tm_{1}}\ilForall_{\tm_{0}}\tyI_{0}(\tm_{1}, \tm_{0})(\tmI_{1}(\tm_{1}))) \simeq \ilFun^{\neord[1]}(\ty_{0}, \tyI_{0})(\ilabs(\tmI_{1}))
  \end{autobreak}
  \\
  \begin{autobreak}
    \MoveEqLeft
    \ilNat^{\neord[1]} :
    \tytmstr_{0}(\ilNat)
  \end{autobreak}
  \\
  \begin{autobreak}
    \MoveEqLeft
    \ilzero^{\neord[1]} :
    \ilNat^{\neord[1]}(\ilzero)
  \end{autobreak}
  \\
  \begin{autobreak}
    \MoveEqLeft
    \ilsucc^{\neord[1]} :
    \ilForall_{\implicit{\nat_{1}}}
    \ilNat^{\neord[1]}(\nat_{1})
    \to \ilNat^{\neord[1]}(\ilsucc(\nat_{1}))
  \end{autobreak}
  \\
  \begin{autobreak}
    \MoveEqLeft
    \ilind_{\ilNat}^{\neord[1]} :
    \ilForall_{\implicit{\nat_{1}}}
    \ilForall_{\nat_{0} : \ilNat^{\neord[1]}(\nat_{1})}
    \ilForall_{\implicit{\ty_{1} : \ilNat \to \tytmstr_{1}}}
    \ilForall_{\ty_{0} : \ilForall_{\var_{1}} \ilNat^{\neord[1]}(\var_{1}) \to \tytmstr_{0}(\ty_{1}(\var_{1}))}
    \ilForall_{\implicit{\tm_{\zeroMark, 1}}}
    \ty_{0}(\blank, \ilzero^{\neord[1]}, \tm_{\zeroMark, 1})
    \to \ilForall_{\implicit{\tm_{\succMark, 1}}}
    (\ilForall_{\var_{1}}\ilForall_{\var_{0}}\ilForall_{\varI_{1}}\ty_{0}(\var_{1}, \var_{0}, \varI_{1}) \to \ty_{0}(\blank, \ilsucc^{\neord[1]}(\var_{0}), \tm_{\succMark, 1}(\var_{1}, \varI_{1})))
    \to \ty_{0}(\blank, \nat_{0}, \ilind_{\ilNat}(\nat_{1}, \ty_{1}, \tm_{\zeroMark, 1}, \tm_{\succMark, 1}))
  \end{autobreak}
  \\
  \begin{autobreak}
    \MoveEqLeft
    \ilbeta_{\ilzero}^{\neord[1]} :
    \text{(omitted)}
  \end{autobreak}
  \\
  \begin{autobreak}
    \MoveEqLeft
    \ilbeta_{\ilsucc}^{\neord[1]} :
    \text{(omitted)}
  \end{autobreak}
  \\
  \begin{autobreak}
    \MoveEqLeft
    \ilConstTy^{\neord[1]} :
    \tytmstr_{0}(\ilConstTy)
  \end{autobreak}
  \\
  \begin{autobreak}
    \MoveEqLeft
    \ilConstTyI^{\neord[1]} :
    \ilForall_{\implicit{\tm_{1}}}
    \ilConstTy^{\neord[1]}(\tm_{1})
    \to \tytmstr_{0}(\ilConstTyI(\tm_{1}))
  \end{autobreak}
  \\
  \begin{autobreak}
    \MoveEqLeft
    \ilConstTm^{\neord[1]} :
    \ilForall_{\implicit{\tm_{1}}}
    \ilForall_{\tm_{0} : \ilConstTy^{\neord[1]}(\tm_{1})}
    \ilConstTyI^{\neord[1]}(\tm_{0})(\ilConstTm(\tm_{1}))
  \end{autobreak}
\end{align*}

\paragraph{\Cref{task:normalization-structure}}

A \(\nfalg\)-normalization structure on a displayed
\(\basicTT_{\FunMark}\)-type-term structure \(\tytmstr_{0}\) over
\(\tytmstr_{1}\) \emph{supports \(\basicTT_{\FunMark}\)} if it is
equipped with identifications
\begin{align*}
  \begin{autobreak}
    \MoveEqLeft
    \ilnfty(\ilFun^{\neord[1]}(\ty_{0}, \tyI_{0})) =
    \ilFun_{\nftyMark}(\ilnfty(\ty_{0}), \ilAbs \var \var_{\varMark}. \ilnfty(\tyI_{0}(\ilVarToTm(\ty_{0}, \var_{\varMark}))))
  \end{autobreak}
  \\
  \begin{autobreak}
    \MoveEqLeft
    \ilreify(\blank, \ilabs^{\neord[1]}(\tmI_{0})) =
    \ilabs_{\nftyMark}(\ilAbs \var \var_{\varMark}. \ilreify(\blank, \tmI_{0}(\ilVarToTm(\ty_{0}, \var_{\varMark}))))
  \end{autobreak}
  \\
  \begin{autobreak}
    \MoveEqLeft
    \ilapp^{\neord[1]}(\ilreflect(\ilFun^{\neord[1]}(\ty_{0}, \tyI_{0}), \map_{\netmMark}), \tm_{0}) =
    \ilreflect(\blank, \ilapp_{\netmMark}(\map_{\netmMark}, \ilreify(\blank, \tm_{0})))
  \end{autobreak}
  \\
  \begin{autobreak}
    \MoveEqLeft
    \ilnfty(\ilNat^{\neord[1]}) =
    \ilNat_{\nftyMark}
  \end{autobreak}
  \\
  \begin{autobreak}
    \MoveEqLeft
    \ilreify(\blank, \ilzero^{\neord[1]}) =
    \ilzero_{\nftmMark}
  \end{autobreak}
  \\
  \begin{autobreak}
    \MoveEqLeft
    \ilreify(\blank, \ilsucc^{\neord[1]}(\nat_{0})) =
    \ilsucc_{\nftmMark}(\ilreify(\blank, \nat_{0}))
  \end{autobreak}
  \\
  \begin{autobreak}
    \MoveEqLeft
    \ilreify(\blank, \ilreflect(\ilNat^{\neord[1]}, \nat_{\netmMark})) =
    \ilNeToNf_{\ilNat}(\nat_{\netmMark})
  \end{autobreak}
  \\
  \begin{autobreak}
    \MoveEqLeft
    \ilind_{\ilNat}^{\neord[1]}(\ilreflect(\ilNat^{\neord[1]}, \nat_{\netmMark}), \tyX_{0}, \tmX_{\zeroMark, 0}, \tmX_{\succMark, 0}) =
    \ilreflect(\blank,
    \ilind_{\ilNat, \netmMark}(\nat_{\netmMark},
    \ilAbs \var \var_{\varMark}. \ilnfty(\tyX_{0}(\ilVarToTm(\blank, \var_{\varMark}))),
    \ilreify(\blank, \tmX_{\zeroMark, 0}),
    \ilAbs \var \var_{\varMark} \varI \varI_{\varMark}. \ilreify(\blank, \tmX_{\succMark, 0}(\ilVarToTm(\blank, \var_{\varMark}), \ilVarToTm(\blank, \varI_{\varMark})))))
  \end{autobreak}
  \\
  \begin{autobreak}
    \MoveEqLeft
    \ilnfty(\ilConstTy^{\neord[1]}) =
    \ilConstTy_{\nftyMark}
  \end{autobreak}
  \\
  \begin{autobreak}
    \MoveEqLeft
    \ilreify(\blank, \ilreflect(\ilConstTy^{\neord[1]}, \tmII_{\netmMark})) =
    \ilNeToNf_{\ilConstTy}(\tmII_{\netmMark})
  \end{autobreak}
  \\
  \begin{autobreak}
    \MoveEqLeft
    \ilnfty(\ilConstTyI^{\neord[1]}(\tmII_{0})) =
    \ilConstTyI_{\nftyMark}(\ilreify(\blank, \tmII_{0}))
  \end{autobreak}
  \\
  \begin{autobreak}
    \MoveEqLeft
    \ilreify(\blank, \ilreflect(\ilConstTyI^{\neord[1]}(\tmII_{0}), \tmIII_{\netmMark})) =
    \ilNeToNf_{\ilConstTyI}(\ilreify(\blank, \tmII_{0}), \tmIII_{\netmMark})
  \end{autobreak}
  \\
  \begin{autobreak}
    \MoveEqLeft
    \ilConstTm^{\neord[1]}(\tmII_{0}) =
    \ilreflect(\blank, \ilConstTm_{\netmMark}(\ilreify(\blank, \tmII_{0})))
  \end{autobreak}
\end{align*}
over the context
\begin{align*}
  \begin{autobreak}
    \MoveEqLeft[0]
    \ty_{1} : \tytmstr_{1},
    \ty_{0} : \tytmstr_{0}(\ty_{1}),
    \tyI_{1} : \ty_{1} \to \tytmstr_{1},
    \tyI_{0} : \ilForall_{\tm_{1}}\ty_{0}(\tm_{1}) \to \tytmstr_{0}(\tyI_{1}(\tm_{1})),
    \tmI_{1} : \ilForall_{\tm_{1}}\tyI_{1}(\tm_{1}),
    \tmI_{0} : \ilForall_{\tm_{1}}\ilForall_{\tm_{1}}\tyI_{0}(\tm_{1}, \tm_{0})(\tmI_{1}(\tm_{1})),
    \map : \ilFun(\ty_{1}, \tyI_{1}),
    \map_{\netmMark} : \ilIsNeTm_{\nfalg}(\map),
    \tm_{1} : \ty_{1},
    \tm_{0} : \ty_{0}(\tm_{1}),
    \nat_{1} : \ilNat,
    \nat_{0} : \ilNat^{\neord[1]}(\nat_{1}),
    \nat_{\netmMark} : \ilIsNeTm_{\nfalg}(\nat_{1}),
    \tyX_{1} : \ilNat \to \tytmstr_{1},
    \tyX_{0} : \ilForall_{\implicit{\var}}\ilNat^{\neord[1]}(\var) \to \tytmstr_{0}(\tyX_{1}(\var)),
    \tmX_{\zeroMark, 1} : \tyX_{1}(\ilzero),
    \tmX_{\zeroMark, 0} : \tyX_{0}(\ilzero^{\neord[1]})(\tmX_{\zeroMark, 1}),
    \tmX_{\succMark, 1} : \ilForall_{\var_{1}}\tyX_{1}(\var_{1}) \to \tyX_{1}(\ilsucc(\var_{1})),
    \tmX_{\succMark, 0} : \ilForall_{\implicit{\var_{1}}}
    \ilForall_{\var_{0} : \ilNat^{\neord[1]}(\var_{1})}
    \ilForall_{\implicit{\varI_{1}}}
    \tyX_{0}(\var_{0})(\varI_{1}) \to \tyX_{0}(\ilsucc(\var_{0}))(\tmX_{\succMark, 1}(\var_{1}, \varI_{1})),
    \tmII_{1} : \ilConstTy,
    \tmII_{\netmMark} : \ilIsNeTm_{\nfalg}(\tmII_{1}),
    \tmII_{0} : \ilConstTy^{\neord[1]}(\tmII_{1}),
    \tmIII_{1} : \ilConstTyI(\tmII_{1}),
    \tmIII_{\netmMark} : \ilIsNeTm_{\nfalg}(\tmIII_{1}).
  \end{autobreak}
\end{align*}

\paragraph{\Cref{task:normalization-model}}

For function types, let \(\ty_{1} : \tytmstr\),
\(\ty_{0} : \nmtytmstr_{\mode, \nfalg}(\ty_{1})\),
\(\tyI_{1} : \ty_{1} \to \tytmstr\), and
\(\tyI_{0} : \ilForall_{\tm_{1} : \ty_{1}}\ty_{0}(\tm_{1}) \to
\nmtytmstr_{\mode, \nfalg}(\tyI_{1}(\tm_{1}))\). By the definition of
\(\nmtytmstr_{\mode, \nfalg}\), the element
\(\ilFun^{\neord[1]}(\ty_{0}, \tyI_{0}) : \nmtytmstr_{\mode,
  \nfalg}(\ilFun(\ty_{1}, \tyI_{1}))\) must be a type family
\(\ilFun(\ty_{1}, \tyI_{1}) \to \ilUniv_{\mode}\) equipped with other
components \(\ilnfty(\ilFun^{\neord[1]}(\ty_{0}, \tyI_{0}))\),
\(\ilreify(\ilFun^{\neord[1]}(\ty_{0}, \tyI_{0}))\), and
\(\ilreflect(\ilFun^{\neord[1]}(\ty_{0}, \tyI_{0}))\). For
\(\ilFun^{\neord[1]}(\ty_{0}, \tyI_{0})\) to be the function type, its
underlying type family
\(\ilFun(\ty_{1}, \tyI_{1}) \to \ilUniv_{\mode}\) must send
\(\ilabs(\tmI_{1})\) for
\(\tmI_{1} : \ilForall_{\tm_{1}}\tyI_{1}(\tm_{1})\) to
\begin{math}
  \ilForall_{\tm_{1}}\ilForall_{\tm_{0}}\tyI_{0}(\tm_{1}, \tm_{0})(\tmI_{1}(\tm_{0})) : \ilUniv_{\mode},
\end{math}
and there exists a unique such type family since \(\ilabs\) is an
equivalence. The other components are also uniquely determined by the
required identifications.

The underlying type of \(\ilNat^{\neord[1]}\) is defined as a
\(\mode\)-modal inductive type family
\(\ilNat^{\neord[1]} : \ilNat \to \ilUniv_{\mode}\) with an additional
constructor for neutral terms.
\begin{align*}
  \begin{autobreak}
    \MoveEqLeft
    \ilzero^{\neord[1]} :
    \ilNat^{\neord[1]}(\ilzero)
  \end{autobreak}
  \\
  \begin{autobreak}
    \MoveEqLeft
    \ilsucc^{\neord[1]} :
    \ilForall_{\implicit{\nat}}
    \ilNat^{\neord[1]}(\nat)
    \to \ilNat^{\neord[1]}(\ilsucc(\nat))
  \end{autobreak}
  \\
  \begin{autobreak}
    \MoveEqLeft
    \ilreflect(\ilNat^{\neord[1]}, \blank) :
    \ilForall_{\implicit{\nat}}
    \ilIsNeTm_{\nfalg}(\nat)
    \to \ilNat^{\neord[1]}(\nat)
  \end{autobreak}
\end{align*}
\(\ilnfty(\ilNat^{\neord[1]})\) must be \(\ilNat_{\nftyMark}\). The
required identifications for \(\ilreify(\ilNat^{\neord[1]}, \blank)\)
explain its definition by induction. The required identification for
\(\ilind_{\ilNat, \netmMark}\) explains how the induction operator
\(\ilind_{\ilNat}^{\neord[1]}\) is defined on the constructor
\(\ilreflect(\ilNat^{\neord[1]}, \blank)\).

For the constant \(\ilConstTy^{\neord[1]}\), we define its underlying
type family to be \(\ilAbs
\tmII_{1}. \ilIsNeTm_{\nfalg}(\tmII_{1})\). We must define
\(\ilnfty(\ilConstTy^{\neord[1]}) \defeq \ilConstTy_{\nftyMark}\). We
define
\(\ilreify(\ilConstTy^{\neord[1]}, \tmII_{\netmMark}) \defeq
\ilNeToNf_{\ilConstTy}(\tmII_{\netmMark})\) and
\(\ilreflect(\ilConstTy^{\neord[1]}, \tmII_{\netmMark}) \defeq
\tmII_{\netmMark}\), and then the required identifications are
satisfied. The constant \(\ilConstTyI^{\neord[1]}\) is defined in the
same way. We must define \(\ilConstTm^{\neord[1]}(\tmII_{0})\) to be
\(\ilreflect(\blank, \ilConstTm_{\netmMark}(\ilreify(\blank,
\tmII_{0})))\).

\section{Normalization results}
\label{sec:norm-results}

We consider the special case of \cref{sec:normalization-models} where
\(\model\) is the initial model \(\workingInitModel\) of
\(\workingtth\) and \(\fun\) is the initial renaming model
\(\workingRenCounit : \workingRenModel \to \workingInitModel\). In this
case, we omit the subscript \({}_{\workingRenCounit}\) and simply
write \(\workingLogos = \workingLogos_{\workingRenCounit}\),
\(\workingNmModel = \workingNmModel_{\workingRenCounit}\), and so
on. The goal of this section is to show that \(\workingLogos\)
validates the following propositions.
\begin{align}
  \label{eq:isnfty-contractible}
  \begin{autobreak}
    \MoveEqLeft
    \ilForall_{\ty : \workingTyTmStr}
    \ilIsContr(\ilIsNfTy_{\workingNfalg}(\ty))
  \end{autobreak}
  \\
  \label{eq:isnftm-contractible}
  \begin{autobreak}
    \MoveEqLeft
    \ilForall_{\ty : \workingTyTmStr}
    \ilForall_{\tm : \ty}
    \ilIsContr(\ilIsNfTm_{\workingNfalg}(\tm))
  \end{autobreak}
\end{align}
That is, every type or term has a unique normal form. Note that the
initiality of \(\workingInitModel\) or \(\workingRenModel\) is not
visible in the internal language of \(\workingLogos\). Therefore, we
have to do some external reasoning first and then embed
\(\workingLogos\) into another \(\infty\)-logos to internalize necessary
ingredients.

\subsection{Semantic setup}
\label{sec:semantic-setup-1}

By the initiality of \(\workingInitModel\), we have a unique section
\(\workingSection : \workingInitModel \to \workingNmModel\) of
\(\workingNmProj : \workingNmModel \to \workingInitModel\). The
\emph{relative induction principle} of \textcite{bocquet2021relative}
ensures that \(\workingSection\) is also related to
\(\workingRenCounit : \workingRenModel \to \workingInitModel\) and
\(\workingRenIncNm : \Ctx(\workingRenModel) \to \Ctx(\workingNmModel)\).

\begin{proposition}
  \label{prop:relative-induction-trans}
  One can construct a natural transformation
  \begin{equation*}
    \workingTrans : \workingRenIncNm \To \workingSection_{\Ctx} \comp
    \workingRenCounit_{\Ctx} : \Ctx(\workingRenModel) \to \Ctx(\workingNmModel)
  \end{equation*}
  over \(\Ctx(\workingInitModel)\). Moreover, the composite
  \begin{align*}
    \begin{autobreak}
      \MoveEqLeft
      \Tm(\workingRenModel)
      \xrightarrow{\workingRenCounit_{\Tm}} \workingRenCounit_{\Ctx}^{\pbMark} \Tm(\workingInitModel)
      \xrightarrow{\workingRenCounit_{\Ctx}^{\pbMark} \workingSection_{\Tm}} \workingRenCounit_{\Ctx}^{\pbMark} \workingSection_{\Ctx}^{\pbMark} \Tm(\workingNmModel)
      \xrightarrow{\workingTrans^{\pbMark}} \workingRenIncNm^{\pbMark} \Tm(\workingNmModel)
    \end{autobreak}
  \end{align*}
  is equivalent over
  \(\workingRenIncNm^{\pbMark} \Ty(\workingNmModel)
  \times_{\workingRenCounit_{\Ctx}^{\pbMark} \Ty(\workingInitModel)}
  \workingRenCounit_{\Ctx}^{\pbMark} \Tm(\workingInitModel)\) to the
  composite
  \begin{align*}
    \begin{autobreak}
      \MoveEqLeft
      \Tm(\workingRenModel)
      \xrightarrow{(\workingTrans^{\pbMark} \comp \workingRenCounit_{\Ctx}^{\pbMark} \workingSection_{\Ty}, \id)} \workingRenIncNm^{\pbMark} \Ty(\workingNmModel) \times_{\workingRenCounit_{\Ctx}^{\pbMark} \Ty(\workingInitModel)} \Tm(\workingRenModel)
      \xrightarrow{\workingVarToTm} \workingRenIncNm^{\pbMark} \Tm(\workingNmModel).
    \end{autobreak}
  \end{align*}
\end{proposition}

The proof of \cref{prop:relative-induction-trans} is essentially the
same as the \(1\)-categorical case \parencite{bocquet2021relative} and
given in \cref{sec:proof-relative-induction}.

We now have the following diagram.
\begin{equation}
  \label{eq:working-logos-1-diagram}
  \begin{tikzcd}[column sep=0ex]
    & \Ctx(\workingNmModel)
    \arrow[dr, equal] \\
    \Ctx(\workingRenModel)
    \arrow[ur, "\workingRenIncNm"]
    \arrow[rr, "\workingSection_{\Ctx} \comp \workingRenCounit_{\Ctx}"{description},
    "{\phantom{a}}"{name = a0}, "{\phantom{a}}"'{name = a1}]
    \arrow[dr, "\workingRenCounit_{\Ctx}"'] &
    \arrow[from = u, to = a0, To, "\workingTrans",
    start anchor = {[yshift = -1ex]}]
    \arrow[from = a1, to = d, phantom, "\simeq"{description}] &
    \Ctx(\workingNmModel)
    \arrow[r, "\workingNmProj_{\Ctx}"] &
    [6ex]
    \Ctx(\workingInitModel) \\
    & \Ctx(\workingInitModel)
    \arrow[ur, "\workingSection_{\Ctx}"']
  \end{tikzcd}
\end{equation}
Let \(\workingMSI\) be the mode sketch
\begin{equation*}
  \begin{tikzcd}[row sep = 3ex]
    & 2'
    \arrow[dr] \\
    0
    \arrow[ur]
    \arrow[rr, "\phantom{a}"{name = a0},
    "\phantom{a}"'{name = a1}]
    \arrow[dr] &
    \arrow[from = u, to = a0, To, start anchor = {[yshift = -1ex]}]
    \arrow[from = d, to = a1, phantom, "\simeq"{description}, end anchor = {[yshift = 1ex]}]&
    2
    \arrow[r] &
    1 \\
    & 3
    \arrow[ur]
  \end{tikzcd}
\end{equation*}
where only \((0 < 2' < 2)\) is not thin. Then
\cref{eq:working-logos-1-diagram} is a functor
\(\workingLogosIBase : \realize{\workingMSI} \to \Cat^{\nMark{2}}\), and
our second working \(\infty\)-logos \(\workingLogosI\) is
\(\Fun(\realize{\workingMSI}, \RFib)_{\workingLogosIBase}\).

\subsection{Axiomatic setup}
\label{sec:axiomatic-setup-1}

\begin{axiom}
  \label{axm:working-logos-2-mode-sketch}
  A function \(\mode : \workingMSI \to \ilLAM\) satisfying
  \cref{axm:mode-sketch-orthogonal,axm:mode-sketch-thin} is
  postulated. We define \(\modeRen \defeq \mode(0)\),
  \(\modeSyn \defeq \mode(1)\), \(\modeNm \defeq \mode(2)\),
  \(\modeNmI \defeq \mode(2')\), and \(\modeSect \defeq \mode(3)\).
\end{axiom}

By construction, the {\acrLAM} \(\modeRen \lor \modeSyn\) corresponds
to the previous \(\infty\)-logos \(\workingLogos\). Therefore, we may
assume all the previous results.

\begin{axiom}
  \label{axm:working-logos-2-axioms-of-working-logos-1}
  Presuppose
  \cref{axm:working-logos-2-mode-sketch}. \Cref{axm:working-logos-1-type-term-structure,axm:working-logos-1-variable}
  are assumed.
\end{axiom}

\begin{axiom}
  \label{axm:working-logos-2-results-of-working-logos-1}
  Presuppose
  \cref{axm:working-logos-2-mode-sketch,axm:working-logos-2-axioms-of-working-logos-1}. A
  \(\modeRen\)-modal displayed \(\workingtth\)-type-term structure
  \(\workingNmDTyTmStr\) over \(\workingTyTmStr\) is postulated, and
  it is assumed to have a \(\workingNfalg\)-normalization structure
  supporting \(\workingtth\).
\end{axiom}

Corresponding to the diagram
\begin{math}
  \workingInitModel
  \xrightarrow{\workingSection}
  \workingNmModel
  \xrightarrow{\workingNmProj}
  \workingInitModel,
\end{math}
we have a \((\modeSect \lor \modeNm \lor \modeSyn)\)-modal
\(\workingtth\)-type-term structure whose \(\modeSyn\)-component is
\(\workingTyTmStr\). Since
\(\workingNmProj \comp \workingSection \simeq \id\), the
\((\modeSect \lor \modeSyn)\)-component of the \(\workingtth\)-type-term
structure is \(\modeSyn\)-modal. Furthermore, the functor
\(\opMode^{\modeSect \lor \modeNm \lor \modeSyn}_{\modeSect} :
\ilUniv_{\modeSect \lor \modeNm \lor \modeSyn} \to \ilUniv_{\modeSect}\)
preserves dependent function types indexed over the terms in the
\(\workingtth\)-type-term structure by \cref{rem:modal-tininess}. It is then
fractured into the following structures
(\cref{axm:working-logos-2-normalization-model,axm:working-logos-2-section,axm:working-logos-2-tiny}).

\begin{axiom}
  \label{axm:working-logos-2-normalization-model}
  Presuppose
  \cref{axm:working-logos-2-mode-sketch,axm:working-logos-2-axioms-of-working-logos-1}. A
  \(\modeNm\)-modal displayed \(\workingtth\)-type-term structure
  \(\workingNmModelIn\) over \(\workingTyTmStr\) is postulated.
\end{axiom}

\begin{definition}
  Let \(\tytmstrI\) be a displayed \(\workingtth\)-type-term structure over a
  \(\workingtth\)-type-term structure \(\tytmstr\). The notion of a
  \emph{presection of \(\tytmstrI\)} is defined by the logical
  relation translation of the notion of a \(\workingtth\)-type-term structure
  with respect to the inverse category \(\neord[2]\). Concretely, a
  presection \(\sect\) of \(\tytmstrI\) consists of
  \begin{align*}
    \begin{autobreak}
      \MoveEqLeft
      \ilTy^{\neord[2]}_{\sect} :
      \ilForall_{\implicit{\ty}}
      \tytmstrI(\ty)
      \to \ilUniv
    \end{autobreak}
    \\
    \begin{autobreak}
      \MoveEqLeft
      \ilTm^{\neord[2]}_{\sect} :
      \ilForall_{\implicit{\ty}}
      \ilForall_{\implicit{\tyI : \tytmstrI(\ty)}}
      \ilTy^{\neord[2]}_{\sect}(\tyI)
      \to \ilForall_{\implicit{\tm}}
      \tyI(\tm)
      \to \ilUniv
    \end{autobreak}
  \end{align*}
  and operators depending on the presentation of \(\workingtth\). We regard
  \(\sect \mapsto \ilTy^{\neord[2]}_{\sect}\) and
  \(\ilTm^{\neord[2]}_{\sect}\) as implicit coercions.
\end{definition}

\begin{definition}
  \label{def:section-of-tytmstr}
  Let \(\mode_{0}\), \(\mode_{1}\), \(\mode_{2}\) be {\acrLAMs} such
  that \(\mode_{\idx} \le {}^{\orthMark} \mode_{\idxI}\) for all
  \(\idx < \idxI\). Let \(\tytmstr\) be a \(\mode_{2}\)-modal
  type-term structure, \(\tytmstrI\) a \(\mode_{1}\)-modal displayed
  type-term structure over \(\tytmstr\), and \(\sect\) a
  \(\mode_{0}\)-modal presection of \(\tytmstrI\). We say \(\sect\) is
  a \emph{section} if the type
  \(\opMode_{\mode_{0}}(\ilExists_{\tyI :
    \tytmstrI(\ty)}\sect(\tyI))\) is contractible for any
  \(\ty : \tytmstr\) and if the type
  \(\opMode_{\mode_{0}}(\ilExists_{\tmI : \tyI(\tm)}\tyII(\tmI))\) is
  contractible for any \(\tyI : \tytmstrI(\ty)\),
  \(\tyII : \sect(\tyI)\), and \(\tm : \ty\).
\end{definition}

\begin{axiom}
  \label{axm:working-logos-2-section}
  Presuppose
  \cref{axm:working-logos-2-mode-sketch,axm:working-logos-2-axioms-of-working-logos-1,axm:working-logos-2-normalization-model}. A
  \(\modeSect\)-modal section \(\workingSectionIn\) of
  \(\workingNmModelIn\) is postulated.
\end{axiom}

\begin{axiom}
  \label{axm:working-logos-2-tiny}
  Presuppose
  \cref{axm:working-logos-2-mode-sketch,axm:working-logos-2-axioms-of-working-logos-1,axm:working-logos-2-normalization-model,axm:working-logos-2-section}. For
  any \(\ty : \workingTyTmStr\), \(\tyI : \workingNmModelIn(\ty)\),
  \(\tyII : \workingSectionIn(\tyI)\), and
  \begin{math}
    \tyX : \ilForall_{\tm : \ty}\ilForall_{\tmI : \tyI(\tm)}\tyII(\tmI) \to \ilUniv_{\modeSect \lor \modeNm \lor \modeSyn},
  \end{math}
  the canonical map
  \begin{equation*}
    \opMode_{\modeSect}(\ilForall_{\tm}\ilForall_{\tmI}\ilForall_{\tmII}\tyX(\tm, \tmI, \tmII))
    \to (\ilForall_{\tm}\ilForall_{\tmI}\ilForall_{\tmII}\opMode_{\modeSect} \tyX(\tm, \tmI, \tmII))
  \end{equation*}
  is assumed to be an equivalence.
\end{axiom}

\begin{construction}
  \label{cst:section-type-1}
  Let \(\sect\) be a section as in \cref{def:section-of-tytmstr}. We
  have the actions of \(\sect\) on types and terms
  \begin{align*}
    \begin{autobreak}
      \MoveEqLeft
      \ilSectApp_{\ilTy_{1}}(\sect) :
      \ilForall_{\ty : \tytmstr}
      \opMode_{\mode_{0}} \tytmstrI(\ty)
    \end{autobreak}
    \\
    \begin{autobreak}
      \MoveEqLeft
      \ilSectApp_{\ilTm_{1}}(\sect) :
      \ilForall_{\implicit{\ty : \tytmstr}}
      \ilForall_{\tm : \ty}
      \ilSectApp_{\ilTy_{1}}(\sect, \ty)(\tm)
    \end{autobreak}
  \end{align*}
  by taking the center of contraction, where
  \(\ilTm^{\neord[1]}_{\tytmstrI}(\ty) : \tytmstrI(\ty) \to \ty \to
  \ilUniv\) induces a function
  \(\opMode_{\mode_{0}} \tytmstrI(\ty) \to \ty \to \ilUniv_{\mode_{0}}\)
  by which we regard \(\ilSectApp_{\ilTm_{1}}(\sect, \ty)\) as a
  function \(\ty \to \ilUniv_{\mode_{0}}\).
\end{construction}

To axiomatize the equivalence
\begin{math}
  \workingRenIncNm^{\pbMark} \workingNmProj \simeq
  \workingNmItpr,
\end{math}
recall from \cref{exm:cartesian-natural-transformation-internal} that
natural transformations cartesian on representable maps correspond to
cartesian morphisms between type-term structures. Cartesian morphisms
between displayed type-term structures are similarly defined. Since a
natural equivalence is in particular cartesian on representable maps,
we have the following.

\begin{axiom}
  \label{axm:working-logos-2-nm-comparison-1}
  Presuppose
  \cref{axm:working-logos-2-mode-sketch,axm:working-logos-2-axioms-of-working-logos-1,axm:working-logos-2-normalization-model}. A
  \(\modeNmI\)-modal displayed \(\workingtth\)-type-term structure
  \(\workingNmModelInI\) over \(\workingTyTmStr\) and a cartesian
  morphism \(\workingCorr\) from \(\workingNmModelIn\) to
  \(\workingNmModelInI\) are postulated.
\end{axiom}

\begin{axiom}
  \label{axm:working-logos-2-nm-comparison-2}
  Presuppose
  \cref{axm:working-logos-2-mode-sketch,axm:working-logos-2-axioms-of-working-logos-1,axm:working-logos-2-results-of-working-logos-1,axm:working-logos-2-normalization-model,axm:working-logos-2-nm-comparison-1}. A
  cartesian morphism \(\workingCorrI\) from \(\workingNmModelInI\) to
  \(\workingNmDTyTmStr\) is postulated.
\end{axiom}

\begin{remark}
  \(\workingCorr\) is interpreted as the identity on
  \(\workingNmModel\). This is introduced merely because the language
  of mode sketch is restrictive.
\end{remark}

We can now internally construct a section of \(\workingNmDTyTmStr\).

\begin{construction}
  \label{cst:working-logos-2-section-to-nn}
  Assume
  \cref{axm:working-logos-2-mode-sketch,axm:working-logos-2-axioms-of-working-logos-1,axm:working-logos-2-results-of-working-logos-1,axm:working-logos-2-normalization-model,axm:working-logos-2-section,axm:working-logos-2-nm-comparison-1,axm:working-logos-2-nm-comparison-2}. For
  any \(\ty : \workingTyTmStr\), we define
  \begin{math}
    \ilSectApp_{\ilTy_{1}}^{\modeRen}(\workingSectionIn, \ty)
    : \workingNmDTyTmStr(\ty)
  \end{math}
  by
  \begin{align*}
    \begin{autobreak}
      \ilUnit
      \xrightarrow{\ilSectApp_{\ilTy_{1}}(\workingSectionIn, \ty)} \opMode^{\modeNm}_{\modeSect} \workingNmModelIn(\ty)
      \xrightarrow{\unitMode_{\modeRen}} \opMode^{\modeSect}_{\modeRen} \opMode^{\modeNm}_{\modeSect} \workingNmModelIn(\ty)
      \simeq \opMode^{\modeNm}_{\modeRen} \workingNmModelIn(\ty)
      \xrightarrow{\unitMode^{\modeRen; \modeNm}_{\modeNmI}} \opMode^{\modeNmI}_{\modeRen} \opMode^{\modeNm}_{\modeNmI} \workingNmModelIn(\ty)
      \xrightarrow{\opMode^{\modeNmI}_{\modeRen} \ilCorrApp_{\ilTy_{1}}(\workingCorr)} \opMode^{\modeNmI}_{\modeRen} \workingNmModelInI(\ty)
      \xrightarrow{\ilCorrApp_{\ilTy_{1}}(\workingCorrI)} \workingNmDTyTmStr(\ty).
    \end{autobreak}
  \end{align*}
  We can similarly define
  \(\ilSectApp_{\ilTm_{1}}^{\modeRen}(\workingSectionIn, \tm) :
  \ilSectApp_{\ilTy_{1}}^{\modeRen}(\workingSectionIn, \ty)(\tm)\) for
  any \(\tm : \ty\).
\end{construction}

Finally, we internalize the second half of
\cref{prop:relative-induction-trans}.

\begin{axiom}
  \label{axm:working-logos-2-var-to-tm}
  Presuppose
  \cref{axm:working-logos-2-mode-sketch,axm:working-logos-2-axioms-of-working-logos-1,axm:working-logos-2-results-of-working-logos-1,axm:working-logos-2-normalization-model,axm:working-logos-2-section,axm:working-logos-2-nm-comparison-1,axm:working-logos-2-nm-comparison-2}. An
  identification
  \begin{math}
    \ilSectApp_{\ilTm_{1}}^{\modeRen}(\workingSectionIn, \var) = \ilVarToTm(\blank, \var_{\varMark})
  \end{math}
  is postulated for any \(\ty : \workingTyTmStr\), \(\var : \ty\), and
  \(\var_{\varMark} : \ilIsVar(\var)\).

\end{axiom}

\subsection{Normalization functions}
\label{sec:norm-funct}

We work in type theory.

\begin{construction}
  Assume
  \cref{axm:working-logos-2-mode-sketch,axm:working-logos-2-axioms-of-working-logos-1,axm:working-logos-2-results-of-working-logos-1,axm:working-logos-2-normalization-model,axm:working-logos-2-section,axm:working-logos-2-nm-comparison-1,axm:working-logos-2-nm-comparison-2}. We
  define \emph{normalization functions}
  \begin{align*}
    \begin{autobreak}
      \MoveEqLeft
      \ilNormalize_{\ilTy} :
      \ilForall_{\ty : \workingTyTmStr}
      \ilIsNfTy_{\workingNfalg}(\ty)
    \end{autobreak}
    \\
    \begin{autobreak}
      \MoveEqLeft
      \ilNormalize_{\ilTm} :
      \ilForall_{\implicit{\ty : \workingTyTmStr}}
      \ilForall_{\tm : \ty}
      \ilIsNfTm_{\workingNfalg}(\tm)
    \end{autobreak}
    \\
    \begin{autobreak}
      \MoveEqLeft
      \ilNormalize_{\ilTy}(\ty) \defeq
      \ilnfty(\ilSectApp_{\ilTy_{1}}^{\modeRen}(\workingSectionIn, \ty))
    \end{autobreak}
    \\
    \begin{autobreak}
      \MoveEqLeft
      \ilNormalize_{\ilTm}(\tm) \defeq
      \ilreify(\blank, \ilSectApp_{\ilTm_{1}}^{\modeRen}(\workingSectionIn, \tm)).
    \end{autobreak}
  \end{align*}
\end{construction}

\begin{construction}
  Assume
  \cref{axm:working-logos-2-mode-sketch,axm:working-logos-2-axioms-of-working-logos-1,axm:working-logos-2-results-of-working-logos-1,axm:working-logos-2-normalization-model,axm:working-logos-2-section,axm:working-logos-2-nm-comparison-1,axm:working-logos-2-nm-comparison-2}. We
  define predicates
  \begin{align*}
    \begin{autobreak}
      \MoveEqLeft
      \ilIsNfTy^{\neord[1]}_{\workingUniqPred} :
      \ilForall_{\implicit{\ty : \workingTyTmStr}}
      \ilIsNfTy_{\workingNfalg}(\ty)
      \to \ilUniv_{\modeRen}
    \end{autobreak}
    \\
    \begin{autobreak}
      \MoveEqLeft
      \ilIsNfTm^{\neord[1]}_{\workingUniqPred} :
      \ilForall_{\implicit{\ty : \workingTyTmStr}}
      \ilForall_{\implicit{\tm : \ty}}
      \ilIsNfTm_{\workingNfalg}(\tm)
      \to \ilUniv_{\modeRen}
    \end{autobreak}
    \\
    \begin{autobreak}
      \MoveEqLeft
      \ilIsNeTm^{\neord[1]}_{\workingUniqPred} :
      \ilForall_{\implicit{\ty : \workingTyTmStr}}
      \ilForall_{\implicit{\tm : \ty}}
      \ilIsNeTm_{\workingNfalg}(\tm)
      \to \ilUniv_{\modeRen}
    \end{autobreak}
    \\
    \begin{autobreak}
      \MoveEqLeft
      \ilIsNfTy^{\neord[1]}_{\workingUniqPred}(\ty_{\nftyMark}) \defeq
      (\ilNormalize_{\ilTy}(\ty) = \ty_{\nftyMark})
    \end{autobreak}
    \\
    \begin{autobreak}
      \MoveEqLeft
      \ilIsNfTm^{\neord[1]}_{\workingUniqPred}(\tm_{\nftmMark}) \defeq
      (\ilNormalize_{\ilTm}(\tm) = \tm_{\nftmMark})
    \end{autobreak}
    \\
    \begin{autobreak}
      \MoveEqLeft
      \ilIsNeTm^{\neord[1]}_{\workingUniqPred}(\tm_{\netmMark}) \defeq
      (\ilSectApp_{\ilTm_{1}}^{\modeRen}(\workingSectionIn, \tm) = \ilreflect(\blank, \tm_{\netmMark})).
    \end{autobreak}
  \end{align*}
\end{construction}

We want to show that \(\ilIsNfTy^{\neord[1]}_{\workingUniqPred}\),
\(\ilIsNfTm^{\neord[1]}_{\workingUniqPred}\), and
\(\ilIsNeTm^{\neord[1]}_{\workingUniqPred}\) are part of a
\emph{displayed \(\workingtth\)-normal form algebra \(\workingUniqPred\) over
  \(\workingNfalg\) supporting variables}, which is defined by the
logical relation translation with respect to the inverse category
\(\neord[1]\), so that it admits a section by the initiality of
\(\workingNfalg\).

\begin{proposition}
  \label{prop:uniqueness-of-nf-var}
  Assume
  \cref{axm:working-logos-2-mode-sketch,axm:working-logos-2-axioms-of-working-logos-1,axm:working-logos-2-results-of-working-logos-1,axm:working-logos-2-normalization-model,axm:working-logos-2-section,axm:working-logos-2-nm-comparison-1,axm:working-logos-2-nm-comparison-2,axm:working-logos-2-var-to-tm}. Then
  \(\workingUniqPred\) supports variables in the sense that we have a
  function
  \begin{align*}
    \begin{autobreak}
      \MoveEqLeft
      \ilForall_{\ty : \workingTyTmStr}
      \ilForall_{\var : \ty}
      \ilForall_{\var_{\varMark} : \ilIsVar(\var)}
      \ilIsNeTm^{\neord[1]}_{\workingUniqPred}(\ilVarToNe(\var_{\varMark})).
    \end{autobreak}
  \end{align*}
\end{proposition}
\begin{proof}
  By the definition of \(\ilVarToTm\) and
  \cref{axm:working-logos-2-var-to-tm}.
\end{proof}

\begin{task}
  \label{task:uniqueness-of-normal-forms}
  Assume
  \cref{axm:working-logos-2-mode-sketch,axm:working-logos-2-axioms-of-working-logos-1,axm:working-logos-2-results-of-working-logos-1,axm:working-logos-2-normalization-model,axm:working-logos-2-section,axm:working-logos-2-tiny,axm:working-logos-2-nm-comparison-1,axm:working-logos-2-nm-comparison-2,axm:working-logos-2-var-to-tm}. Show
  that \(\workingUniqPred\) is a displayed \(\workingtth\)-normal form
  algebra over \(\workingNfalg\).
\end{task}

\begin{remark}
  We have not used \cref{axm:working-logos-2-tiny} explicitly. This
  axiom is needed in \cref{task:uniqueness-of-normal-forms} when
  \(\workingtth\) has operators with variable binding such as
  dependent function types and lambda abstraction; see
  \cref{prop:section-component-ty2} below.
\end{remark}

\subsection{Normalization results}
\label{sec:norm-results-1}

Suppose we have done \cref{task:uniqueness-of-normal-forms} and assume
\cref{axm:working-logos-2-mode-sketch,axm:working-logos-2-axioms-of-working-logos-1,axm:working-logos-2-results-of-working-logos-1,axm:working-logos-2-normalization-model,axm:working-logos-2-section,axm:working-logos-2-tiny,axm:working-logos-2-nm-comparison-1,axm:working-logos-2-nm-comparison-2,axm:working-logos-2-var-to-tm}
in type theory. By
\cref{prop:uniqueness-of-nf-var,task:uniqueness-of-normal-forms} and
by the initiality of \(\workingNfalg\), we have a section of
\(\workingUniqPred\). By definition, this shows that the propositions
\labelcref{eq:isnfty-contractible,eq:isnftm-contractible} are
inhabited with centers \(\ilNormalize_{\ilTy}(\ty)\) and
\(\ilNormalize_{\ilTm}(\tm)\), respectively.

Apply the results to the \(\infty\)-logos \(\workingLogosI\). Since
\cref{eq:isnfty-contractible,eq:isnftm-contractible} are defined in
the universe \(\ilUniv_{\modeRen \lor \modeSyn}\) which corresponds to
the full subcategory \(\workingLogos \subset \workingLogosI\), it follows
that \(\workingLogos\) also validates
\cref{eq:isnfty-contractible,eq:isnftm-contractible}.

\subsection{Running example}
\label{sec:function-types-1}

We consider the case when \(\workingtth =
\basicTT_{\FunMark}\). Before doing
\cref{task:uniqueness-of-normal-forms}, we extract more components
from the section \(\workingSectionIn\). \Cref{cst:section-type-1}
shows that the section \(\workingSectionIn\) acts on types and
terms. We can show that it also acts on families of types and
terms. This is where we need \cref{axm:working-logos-2-tiny}.

\begin{proposition}
  \label{prop:section-component-ty2}
  Assume
  \cref{axm:working-logos-2-mode-sketch,axm:working-logos-2-axioms-of-working-logos-1,axm:working-logos-2-normalization-model,axm:working-logos-2-section,axm:working-logos-2-tiny}. Then
  the type
  \begin{align*}
    \begin{autobreak}
      \opMode_{\modeSect}(\ilExists_{\tyI_{\modeNm} : \ilForall_{\tm_{\modeSyn}}\ty_{\modeNm}(\tm_{\modeSyn}) \to \workingNmModelIn(\tyI_{\modeSyn}(\tm_{\modeSyn}))}
      \ilForall_{\tm_{\modeSyn}}
      \ilForall_{\tm_{\modeNm}}
      \ty_{\modeSect}(\tm_{\modeNm})
      \to \workingSectionIn(\tyI_{\modeNm}(\tm_{\modeSyn}, \tm_{\modeNm})))
    \end{autobreak}
  \end{align*}
  is contractible for any \(\ty_{\modeSyn} : \workingTyTmStr\),
  \(\ty_{\modeNm} : \workingNmModelIn(\ty_{\modeSyn})\),
  \(\ty_{\modeSect} : \workingSectionIn(\ty_{\modeNm})\),
  and \(\tyI_{\modeSyn} : \ty_{\modeSyn} \to \workingTyTmStr\).
\end{proposition}
\begin{proof}
  We have
  \begin{EqReasoning}
    \begin{align*}
      & \term{\opMode_{\modeSect}(\ilExists_{\tyI_{\modeNm} : \ilForall_{\tm_{\modeSyn}}\ty_{\modeNm}(\tm_{\modeSyn}) \to \workingNmModelIn(\tyI_{\modeSyn}(\tm_{\modeSyn}))}\ilForall_{\tm_{\modeSyn}}\ilForall_{\tm_{\modeNm}}\ty_{\modeSect}(\tm_{\modeNm}) \to \workingSectionIn(\tyI_{\modeNm}(\tm_{\modeSyn}, \tm_{\modeNm})))} \\
      \simeq & \by{\(\workingNmModelIn(\tyI_{\modeSyn}(\tm_{\modeSyn})) \simeq (\ty_{\modeSect}(\tm_{\modeNm}) \to \workingNmModelIn(\tyI_{\modeSyn}(\tm_{\modeSyn})))\) since \(\modeSect \le {}^{\orthMark}\modeNm\)} \\
      & \term{\opMode_{\modeSect}(\ilExists_{\tyI_{\modeNm} : \ilForall_{\tm_{\modeSyn}}\ilForall_{\tm_{\modeNm} : \ty_{\modeNm}(\tm_{\modeSyn})}\ty_{\modeSect}(\tm_{\modeNm}) \to \workingNmModelIn(\tyI_{\modeSyn}(\tm_{\modeSyn}))}\ilForall_{\tm_{\modeSyn}}\ilForall_{\tm_{\modeNm}}\ilForall_{\tm_{\modeSect}}\workingSectionIn(\tyI_{\modeNm}(\tm_{\modeSyn}, \tm_{\modeNm}, \tm_{\modeSect})))} \\
      \simeq & \by{\(\ilForall\) distributes over \(\ilExists\)} \\
      & \term{\opMode_{\modeSect}(\ilForall_{\tm_{\modeSyn}}\ilForall_{\tm_{\modeNm} : \ty_{\modeNm}(\tm_{\modeSyn})}\ty_{\modeSect}(\tm_{\modeNm}) \to \ilExists_{\tyI_{\modeNm} : \workingNmModelIn(\tyI_{\modeSyn}(\tm_{\modeSyn}))}\workingSectionIn(\tyI_{\modeNm}))} \\
      \simeq & \by{\cref{axm:working-logos-2-tiny}} \\
      & \term{\ilForall_{\tm_{\modeSyn}}\ilForall_{\tm_{\modeNm} : \ty_{\modeNm}(\tm_{\modeSyn})}\ty_{\modeSect}(\tm_{\modeNm}) \to \opMode_{\modeSect}(\ilExists_{\tyI_{\modeNm} : \workingNmModelIn(\tyI_{\modeSyn}(\tm_{\modeSyn}))}\workingSectionIn(\tyI_{\modeNm}))},
    \end{align*}
  \end{EqReasoning}
  and the last type is contractible since \(\workingSectionIn\) is a
  section.
\end{proof}

By taking the center of contraction, we can see that
\(\workingSectionIn\) acts on families of types and similarly on
families of \dots of families of types/terms. Since
\(\ilSectApp_{\ilTy_{1}}^{\modeRen}(\workingSectionIn)\)
(\cref{cst:working-logos-2-section-to-nn}) is essentially the
composite of \(\ilSectApp_{\ilTy_{1}}(\workingSectionIn)\),
\(\ilCorrApp_{\ilTy_{1}}(\workingCorr)\), and
\(\ilCorrApp_{\ilTy_{1}}(\workingCorrI)\), it is also extended to a
function on families of \dots of families of types/terms. For the
record, we have functions such as
\begin{align*}
  \begin{autobreak}
    \MoveEqLeft
    \ilSectApp_{\ilTy_{2}}^{\modeRen}(\workingSectionIn) :
    \ilForall_{\implicit{\ty : \workingTyTmStr}}
    \ilForall_{\tyI : \ty \to \workingTyTmStr}
    \ilForall_{\implicit{\tm}}
    \ilSectApp_{\ilTy_{1}}^{\modeRen}(\workingSectionIn, \ty)(\tm)
    \to \workingNmDTyTmStr(\tyI(\tm))
  \end{autobreak}
  \\
  \begin{autobreak}
    \MoveEqLeft
    \ilSectApp_{\ilTm_{2}}^{\modeRen}(\workingSectionIn) :
    \ilForall_{\implicit{\ty : \workingTyTmStr}}
    \ilForall_{\implicit{\tyI : \ty \to \workingTyTmStr}}
    \ilForall_{\tmI : \ilForall_{\tm}\tyI(\tm)}
    \ilForall_{\implicit{\tm}}
    \ilForall_{\tm_{0} : \ilSectApp_{\ilTy_{1}}^{\modeRen}(\workingSectionIn, \ty)(\tm)}
    \ilSectApp_{\ilTy_{2}}^{\modeRen}(\workingSectionIn, \tyI, \tm_{0})(\tmI(\tm)).
  \end{autobreak}
\end{align*}
Moreover, these commute with structural operators such as substitution
and respect all type-theoretic structures, for example
\begin{align*}
  \begin{autobreak}
    \MoveEqLeft
    \ilSectApp_{\ilTy_{1}}^{\modeRen}(\workingSectionIn, \tyI(\tm)) =
    \ilSectApp_{\ilTy_{2}}^{\modeRen}(\workingSectionIn, \tyI)(\ilSectApp_{\ilTm_{1}}^{\modeRen}(\workingSectionIn, \tm))
  \end{autobreak}
  \\
  \begin{autobreak}
    \MoveEqLeft
    \ilSectApp_{\ilTy_{1}}^{\modeRen}(\workingSectionIn, \ilFun(\ty, \tyI)) =
    \ilFun^{\neord[1]}(\ilSectApp_{\ilTy_{1}}^{\modeRen}(\workingSectionIn, \ty), \ilSectApp_{\ilTy_{2}}^{\modeRen}(\workingSectionIn, \tyI)).
  \end{autobreak}
\end{align*}
These functions and identifications are part of an \emph{infinite}
coherent structure. Our \emph{finite} set of axioms allows us to
extract arbitrarily many components of that structure.

\paragraph{\Cref{task:uniqueness-of-normal-forms}}

\begin{warning}
  In the equational reasonings below, many transport functions are
  omitted for readability. Some expressions may be even ill-formed but
  can be fixed by inserting appropriate transport functions.
\end{warning}

\subparagraph{Case \(\ilFun_{\nftyMark}(\ty_{\nftyMark}, \tyI_{\nftyMark})\)}

Let \(\ty : \workingTyTmStr\),
\(\ty_{\nftyMark} : \ilIsNfTy_{\workingNfalg}(\ty)\),
\(\tyI : \ty \to \workingTyTmStr\), and
\(\tyI_{\nftyMark} : \ilForall_{\var}\ilIsVar(\var) \to
\ilIsNfTy_{\workingNfalg}(\tyI(\var))\), and suppose induction
hypotheses
\begin{align*}
  \begin{autobreak}
    \MoveEqLeft
    \ilNormalize_{\ilTy}(\ty) =
    \ty_{\nftyMark}
  \end{autobreak}
  \\
  \begin{autobreak}
    \MoveEqLeft
    \ilForall_{\var}
    \ilForall_{\var_{\varMark}}
    \ilNormalize_{\ilTy}(\tyI(\var)) =
    \tyI_{\nftyMark}(\var, \var_{\varMark}).
  \end{autobreak}
\end{align*}
Then we have
\begin{EqReasoning}
  \begin{align*}
    & \term{\ilNormalize_{\ilTy}(\ilFun(\ty, \tyI))} \\
    = & \by{definition} \\
    & \term{\ilnfty(\ilSectApp_{\ilTy_{1}}^{\modeRen}(\workingSectionIn, \ilFun(\ty, \tyI)))} \\
    = & \by{\(\workingSectionIn\) respects function types} \\
    & \term{\ilnfty(\ilFun^{\neord[1]}(\ilSectApp_{\ilTy_{1}}^{\modeRen}(\workingSectionIn, \ty), \ilSectApp_{\ilTy_{2}}^{\modeRen}(\workingSectionIn, \tyI)))} \\
    = & \by{definition} \\
    & \term{\ilFun_{\nftyMark}(\ilnfty(\ilSectApp_{\ilTy_{1}}^{\modeRen}(\workingSectionIn, \ty)),} \\
    & \term{\quad\ilAbs \var \var_{\varMark}. \ilnfty(\ilSectApp_{\ilTy_{2}}^{\modeRen}(\workingSectionIn, \tyI, \ilVarToTm(\blank, \var_{\varMark})))).}
  \end{align*}
\end{EqReasoning}
For the second argument of the last expression, we have
\begin{EqReasoning}
  \begin{align*}
    & \term{\ilSectApp_{\ilTy_{2}}^{\modeRen}(\workingSectionIn, \tyI, \ilVarToTm(\blank, \var_{\varMark}))} \\
    = & \by{\cref{axm:working-logos-2-var-to-tm}} \\
    & \term{\ilSectApp_{\ilTy_{2}}^{\modeRen}(\workingSectionIn, \tyI, \ilSectApp_{\ilTm_{1}}^{\modeRen}(\workingSectionIn, \var))} \\
    = & \by{\(\workingSectionIn\) commutes with substitution} \\
    & \term{\ilSectApp_{\ilTy_{1}}^{\modeRen}(\workingSectionIn, \tyI(\var)).}
  \end{align*}
\end{EqReasoning}
Then, by the induction hypotheses, we conclude that
\begin{math}
  \ilNormalize_{\ilTy}(\ilFun(\ty, \tyI)) =
  \ilFun_{\nftyMark}(\ty_{\nftyMark}, \tyI_{\nftyMark}).
\end{math}

\subparagraph{Case \(\ilabs_{\nftmMark}(\tmI_{\nftmMark})\)}

Let \(\ty : \workingTyTmStr\), \(\tyI : \ty \to \workingTyTmStr\),
\(\tmI : \ilForall_{\tm}\tyI(\tm)\), and
\(\tmI_{\nftmMark} : \ilForall_{\var}\ilIsVar(\var) \to
\ilIsNfTm_{\workingNfalg}(\tmI(\var))\), and suppose induction
hypothesis
\begin{align*}
  \begin{autobreak}
    \MoveEqLeft
    \ilForall_{\var}
    \ilForall_{\var_{\varMark}}
    \ilNormalize_{\ilTm}(\tmI(\var)) =
    \tmI_{\nftmMark}(\var, \var_{\varMark}).
  \end{autobreak}
\end{align*}
Then we have
\begin{EqReasoning}
  \begin{align*}
    & \term{\ilNormalize_{\ilTm}(\ilabs(\tmI))} \\
    = & \by{\(\workingSectionIn\) respects function types} \\
    & \term{\ilreify(\blank, \ilabs^{\neord[1]}(\ilSectApp_{\ilTm_{2}}^{\modeRen}(\workingSectionIn, \tmI)))} \\
    = & \by{definition} \\
    & \term{\ilabs_{\nftmMark}(\ilAbs \var \var_{\varMark}. \ilreify(\blank, \ilSectApp_{\ilTm_{2}}^{\modeRen}(\workingSectionIn, \tmI, \ilVarToTm(\blank, \var_{\varMark}))))} \\
    = & \by{\cref{axm:working-logos-2-var-to-tm}, and \(\workingSectionIn\) commutes with substitution} \\
    & \term{\ilabs_{\nftmMark}(\ilAbs \var \var_{\varMark}. \ilreify(\blank, \ilSectApp_{\ilTm_{1}}^{\modeRen}(\workingSectionIn, \tmI(\var))))} \\
    = & \by{induction hypothesis} \\
    & \term{\ilabs_{\nftmMark}(\tmI_{\nftmMark})}.
  \end{align*}
\end{EqReasoning}

\subparagraph{Case \(\ilapp_{\netmMark}(\map_{\netmMark}, \tm_{\nftmMark})\)}

Let \(\ty : \workingTyTmStr\), \(\tyI : \ty \to \workingTyTmStr\),
\(\map : \ilFun(\ty, \tyI)\),
\(\map_{\netmMark} : \ilIsNeTm_{\workingNfalg}(\map)\), \(\tm : \ty\),
and \(\tm_{\nftmMark} : \ilIsNfTm_{\workingNfalg}(\tm)\), and suppose
induction hypotheses
\begin{align*}
  \begin{autobreak}
    \MoveEqLeft
    \ilSectApp_{\ilTm_{1}}^{\modeRen}(\workingSectionIn, \map) =
    \ilreflect(\blank, \map_{\netmMark})
  \end{autobreak}
  \\
  \begin{autobreak}
    \MoveEqLeft
    \ilNormalize_{\ilTm}(\tm) =
    \tm_{\nftmMark}.
  \end{autobreak}
\end{align*}
Then we have
\begin{EqReasoning}
  \begin{align*}
    & \term{\ilSectApp_{\ilTm_{1}}^{\modeRen}(\workingSectionIn, \ilapp(\map, \tm))} \\
    = & \by{\(\workingSectionIn\) respects function types} \\
    & \term{\ilapp^{\neord[1]}(\ilSectApp_{\ilTm_{1}}^{\modeRen}(\workingSectionIn, \map), \ilSectApp_{\ilTm_{1}}^{\modeRen}(\workingSectionIn, \tm))} \\
    = & \by{induction hypothesis for \(\map_{\netmMark}\)} \\
    & \term{\ilapp^{\neord[1]}(\ilreflect(\blank, \map_{\netmMark}), \ilSectApp_{\ilTm_{1}}^{\modeRen}(\workingSectionIn, \tm))} \\
    = & \by{definition} \\
    & \term{\ilreflect(\blank, \ilapp_{\netmMark}(\map_{\netmMark}, \ilreify(\blank, \ilSectApp_{\ilTm_{1}}^{\modeRen}(\workingSectionIn, \tm))))} \\
    = & \by{induction hypothesis for \(\tm_{\nftmMark}\)} \\
    & \term{\ilreflect(\blank, \ilapp_{\netmMark}(\map_{\netmMark}, \tm_{\nftmMark}))}.
  \end{align*}
\end{EqReasoning}

\subparagraph{Other cases}

The other cases are straightforward by similar calculation.

\section{Coherence results}
\label{sec:coherence-theorem}

We continue to work with the initial model \(\workingInitModel\) of
\(\workingtth\) and the initial renaming model
\(\workingRenCounit : \workingRenModel \to \workingInitModel\). We show
that the \(\infty\)-logos
\(\workingLogos = \workingLogos_{\workingRenCounit}\) validates the
following propositions.
\begin{align}
  \label{eq:ty-isset}
  \begin{autobreak}
    \MoveEqLeft
    \ilIsSet(\opMode_{\modeRen} \workingTyTmStr)
  \end{autobreak}
  \\
  \label{eq:tm-isset}
  \begin{autobreak}
    \MoveEqLeft
    \ilForall_{\ty : \workingTyTmStr}
    \ilIsSet(\opMode_{\modeRen} \ty)
  \end{autobreak}
\end{align}

\Cref{eq:ty-isset,eq:tm-isset} will be proved by
calculating the identity types of the inductive type families
\(\ilIsNfTy_{\workingNfalg}\), \(\ilIsNfTm_{\workingNfalg}\), and
\(\ilIsNeTm_{\workingNfalg}\) by induction, but then we will have to
know about base cases. The following will be used in the base case of
\(\ilVarToNe\).

\begin{proposition}
  \label{prop:renaming-model-tm-discrete}
  Let \(\model\) be a model of \(\basicTT\) and take the initial
  renaming model \(\Ren_{\model} \to \model\). Then
  \(\Tm(\Ren_{\model})\) is a \(0\)-truncated object in
  \(\RFib_{\Ctx(\Ren_{\model})}\).
\end{proposition}

Intuitively, an object of \(\Ctx(\Ren_{\model})\) is a ``context''
\((\var_{1} : \ty_{1}, \dots, \var_{\nat} : \ty_{\nat})\), and the
fiber of \(\Tm(\Ren_{\model})\) over it is the set of variables
\(\{\var_{1}, \dots, \var_{\nat}\}\). Then \(\Tm(\Ren_{\model})\) is
\(0\)-truncated by construction. The actual proof of
\cref{prop:renaming-model-tm-discrete} is slightly different and given
in \cref{sec:proof-proposition}.

\subsection{Axiomatic setup}
\label{sec:axiomatic-setup-2}

Our third working \(\infty\)-logos is again \(\workingLogos\), but we
now know that \(\workingLogos\) satisfies more axioms.

\begin{axiom}
  \label{axm:working-logos-3-axioms-from-working-logos-1}
  \Cref{axm:working-logos-1-mode-sketch,axm:working-logos-1-type-term-structure,axm:working-logos-1-variable}
  are assumed.
\end{axiom}

\begin{axiom}
  \label{axm:working-logos-3-results-from-working-logos-2}
  Presuppose
  \cref{axm:working-logos-3-axioms-from-working-logos-1}. The
  propositions
  \labelcref{eq:isnfty-contractible,eq:isnftm-contractible} are
  assumed to be true.
\end{axiom}

By \cref{prop:renaming-model-tm-discrete}, we have:

\begin{axiom}
  \label{axm:working-logos-3-variable-discrete}
  Presuppose
  \cref{axm:working-logos-3-axioms-from-working-logos-1}. The type
  \begin{math}
    \opMode_{\modeRen}(\ilExists_{\ty : \workingTyTmStr}\ilExists_{\var : \ty}\ilIsVar(\var))
  \end{math}
  is assumed to be \(0\)-truncated.
\end{axiom}

Finally, since
\(\workingRenCounit : \workingRenModel \to \workingInitModel\) is a
morphism of models of \(\basicTT\), we have the following by
\cref{rem:modal-tininess}.

\begin{axiom}
  \label{axm:working-logos-3-variable-tiny}
  Presuppose
  \cref{axm:working-logos-3-axioms-from-working-logos-1}. For any
  \(\ty : \workingTyTmStr\) and
  \(\tyX : \ilForall_{\var : \ty}\ilIsVar(\var) \to \ilUniv_{\modeRen
    \lor \modeSyn}\), the canonical function
  \begin{align*}
    \begin{autobreak}
      \MoveEqLeft
      \opMode_{\modeRen}(\ilForall_{\var}\ilForall_{\var_{\varMark}}\tyX(\var, \var_{\varMark}))
      \to (\ilForall_{\var}\ilForall_{\var_{\varMark}}\opMode_{\modeRen}\tyX(\var, \var_{\varMark}))
    \end{autobreak}
  \end{align*}
  is assumed to be an equivalence.
\end{axiom}

\subsection{Coherence predicate}
\label{sec:coherence-predicate}

We work in type theory.

\begin{construction}
  Assume \cref{axm:working-logos-3-axioms-from-working-logos-1}. We
  define a displayed normal form predicate
  \(\workingCohPred\) over \(\workingNfalg\) by
  \begin{align*}
    \begin{autobreak}
      \MoveEqLeft
      \ilIsNfTy^{\neord[1]}_{\workingCohPred}(\ty_{\nftyMark}) \defeq
      \ilIsContr(\opMode_{\modeRen} \ilLoop(\ty, \ty_{\nftyMark}))
    \end{autobreak}
    \\
    \begin{autobreak}
      \MoveEqLeft
      \ilIsNfTm^{\neord[1]}_{\workingCohPred}(\tm_{\nftmMark}) \defeq
      \ilIsContr(\opMode_{\modeRen} \ilLoop(\tm, \tm_{\nftmMark}))
    \end{autobreak}
    \\
    \begin{autobreak}
      \MoveEqLeft
      \ilIsNeTm^{\neord[1]}_{\workingCohPred}(\tm_{\netmMark}) \defeq
      \ilIsContr(\opMode_{\modeRen} \ilLoop(\ty, \tm, \tm_{\netmMark}))
    \end{autobreak}
  \end{align*}
  over the context
  \begin{math}
    \ty : \workingTyTmStr,
    \ty_{\nftyMark} : \ilIsNfTy_{\workingNfalg}(\ty),
    \tm : \ty,
    \tm_{\nftyMark} : \ilIsNfTm_{\workingNfalg}(\tm),
    \tm_{\netmMark} : \ilIsNeTm_{\workingNfalg}(\tm).
  \end{math}
  Here, the loop space \(\ilLoop(\blank)\) is taken in appropriate
  dependent pair types.
\end{construction}

We want to show that \(\workingCohPred\) is a displayed
\(\workingtth\)-normal form algebra over \(\workingNfalg\) supporting
variables so that it admits a section by the initiality of
\(\workingNfalg\). This is done by calculating the identity types of
\(\ilIsNfTy_{\workingNfalg}\), \(\ilIsNfTm_{\workingNfalg}\), and
\(\ilIsNeTm_{\workingNfalg}\).

\begin{example}
  \label{exm:nfalg-path-var}
  Assume
  \cref{axm:working-logos-3-axioms-from-working-logos-1}. Regardless
  the presentation of \(\workingtth\), one can calculate the identity types
  for the constructor \(\ilVarToNe\) as follows. Let
  \(\ty : \workingTyTmStr\), \(\var : \ty\), and
  \(\var_{\varMark} : \ilIsVar(\var)\). We define
  \begin{math}
    \ilCode :
    \ilForall_{\tyI : \workingTyTmStr}
    \ilForall_{\tmI : \tyI}
    \ilIsNeTm_{\workingNfalg}(\tmI)
    \to \ilUniv_{\modeRen}
  \end{math}
  by
  \begin{math}
    \ilCode(\tyI, \varI, \ilVarToNe(\varI_{\varMark})) \defeq
    \opMode_{\modeRen}((\ty, \var, \var_{\varMark}) = (\tyI, \varI, \varI_{\varMark}))
  \end{math}
  and
  \begin{math}
    \ilCode(\blank, \blank, \blank) \defeq \opMode_{\modeRen} \ilEmpty
  \end{math}
  for all the other cases. It is routine to establish an equivalence
  \begin{equation*}
    \opMode_{\modeRen}((\ty, \var, \ilVarToNe(\var_{\varMark})) = (\tyI, \tmI, \tmI_{\neMark}))
    \simeq \ilCode(\tyI, \tmI, \tmI_{\neMark})
  \end{equation*}
  by the encode-decode method \parencite[Section 2.12]{hottbook}
  or the fundamental theorem of identity types \parencite[Theorem
  5.8.2]{hottbook}. In particular,
  \begin{align*}
    \begin{autobreak}
      \MoveEqLeft
      \opMode_{\modeRen}((\ty, \var, \ilVarToNe(\var_{\varMark})) = (\tyI, \varI, \ilVarToNe(\varI_{\varMark}))) \simeq
      \opMode_{\modeRen}((\ty, \var, \var_{\varMark}) = (\tyI, \varI, \varI_{\varMark})).
    \end{autobreak}
  \end{align*}
\end{example}

\begin{proposition}
  \label{prop:coherence-var}
  Assume
  \cref{axm:working-logos-3-axioms-from-working-logos-1,axm:working-logos-3-variable-discrete}. Then
  \(\workingCohPred\) supports variables, that is, we have a function
  \begin{align*}
    \begin{autobreak}
      \MoveEqLeft
      \ilForall_{\ty : \workingTyTmStr}
      \ilForall_{\var : \ty}
      \ilForall_{\var_{\varMark} : \ilIsVar(\var)}
      \ilIsNeTm^{\neord[1]}_{\workingCohPred}(\ilVarToNe(\var_{\varMark})).
    \end{autobreak}
  \end{align*}
\end{proposition}
\begin{proof}
  Let \(\ty : \workingTyTmStr\), \(\var : \ty\), and
  \(\var_{\varMark} : \ilIsVar(\var)\). We show that
  \begin{math}
    \opMode_{\modeRen} \ilLoop(\ty, \var, \ilVarToNe(\var_{\varMark}))
  \end{math}
  is contractible. By \cref{exm:nfalg-path-var}, we have
  \begin{math}
    \opMode_{\modeRen} \ilLoop(\ty, \var, \ilVarToNe(\var_{\varMark}))
    \simeq \opMode_{\modeRen} \ilLoop(\ty, \var, \var_{\varMark}),
  \end{math}
  but the latter is contractible by
  \cref{axm:working-logos-3-variable-discrete}.
\end{proof}

\begin{task}
  \label{task:coherence}
  Assume
  \cref{axm:working-logos-3-axioms-from-working-logos-1,axm:working-logos-3-results-from-working-logos-2,axm:working-logos-3-variable-tiny}. Show
  that \(\workingCohPred\) is a displayed \(\workingtth\)-normal form algebra
  over \(\nfalg\).
\end{task}

\begin{remark}
  \Cref{axm:working-logos-3-variable-discrete} is only meant to be the
  base case of induction. We do not need it in \cref{task:coherence}.
\end{remark}

\begin{remark}
  \Cref{axm:working-logos-3-variable-tiny} is used in
  \cref{task:coherence} when \(\workingtth\) has variable binding operators.
\end{remark}

\subsection{Coherence results}
\label{sec:coherence-results}

Suppose we have done \cref{task:coherence} and assume
\cref{axm:working-logos-3-axioms-from-working-logos-1,axm:working-logos-3-results-from-working-logos-2,axm:working-logos-3-variable-discrete,axm:working-logos-3-variable-tiny}
in type theory. By \cref{prop:coherence-var,task:coherence} and by the
initiality of \(\workingNfalg\), we have a section of
\(\workingCohPred\). By definition and by
\cref{axm:working-logos-3-results-from-working-logos-2}, this shows
that the type \(\opMode_{\modeRen} \ilLoop(\ty)\) is contractible for
any \(\ty : \workingTyTmStr\) and that the type
\(\opMode_{\modeRen} \ilLoop(\tm)\) is contractible for any
\(\tm : \ty\). Since \(\modeRen\) is lex, these imply
\cref{eq:ty-isset,eq:tm-isset}.

Apply the results to the \(\infty\)-logos \(\workingLogos\) to show that
the object
\(\workingRenCounit_{\Ctx}^{\pbMark} \Ty(\workingInitModel)\) and the
map
\(\workingRenCounit_{\Ctx}^{\pbMark} \Tm(\workingInitModel) \to
\workingRenCounit_{\Ctx}^{\pbMark} \Ty(\workingInitModel)\) in
\(\RFib_{\Ctx(\workingRenModel)}\) are \(0\)-truncated. Since
\(\workingInitModel\) and \(\workingRenModel\) have the same types,
the context part
\(\workingRenCounit_{\Ctx} : \Ctx(\workingRenModel) \to
\Ctx(\workingInitModel)\) is surjective on objects. Then the object
\(\Ty(\workingInitModel)\) and the map
\(\Tm(\workingInitModel) \to \Ty(\workingInitModel)\) in
\(\RFib_{\Ctx(\workingInitModel)}\) are also \(0\)-truncated. Finally,
using the fact that morphisms in \(\Ctx(\workingInitModel)\) are
presented by sequences of terms (precisely, the initial model
\(\workingInitModel\) is democratic \parencite[Proposition
4.11]{nguyen2022type-arxiv}), we see that \(\Ctx(\workingInitModel)\)
is a \((1, 1)\)-category. Thus, we conclude that the initial model
\(\workingInitModel\) of \(\workingtth\) is a \(1\)-model.

\subsection{Running example}
\label{sec:function-types-2}

We consider the case when \(\workingtth = \basicTT_{\FunMark}\). One
can show that the total function of each constructor for
\(\workingNfalg\) is a \emph{\(\modeRen\)-embedding}. For example, the
function
\begin{align*}
  \begin{autobreak}
    \MoveEqLeft
    \opMode_{\modeRen}(\ilExists_{\ty}
    \ilIsNfTy_{\workingNfalg}(\ty)
    \times \ilExists_{\tyI : \ty \to \workingTyTmStr}
    \ilForall_{\var}\ilIsVar(\var) \to \ilIsNfTy_{\workingNfalg}(\tyI(\var)))
    \to \opMode_{\modeRen}(\ilExists_{\tyII}\ilIsNfTy_{\workingNfalg}(\tyII))
  \end{autobreak}
\end{align*}
induced by the constructor \(\ilFun_{\nftyMark}\) is an
embedding. Using
\cref{axm:working-logos-3-results-from-working-logos-2}, the total
function of \(\ilFun\) is also a \(\modeRen\)-embedding:
\begin{math}
  \opMode_{\modeRen}(\ilExists_{\ty : \workingTyTmStr}\ty \to \workingTyTmStr)
  \hookrightarrow \opMode_{\modeRen} \workingTyTmStr.
\end{math}
Then, for any \(\ty : \workingTyTmStr\) and
\(\tyI : \ty \to \workingTyTmStr\), the function
\begin{align*}
  \begin{autobreak}
    \MoveEqLeft
    \opMode_{\modeRen}(\ilExists_{\tmI : \ilForall_{\tm}\tyI(\tm)}\ilIsNfTm_{\workingNfalg}(\tmI))
    \to \opMode_{\modeRen}(\ilExists_{\map : \ilFun(\ty, \tyI)}\ilIsNfTm_{\workingNfalg}(\map))
  \end{autobreak}
\end{align*}
induced by the constructor \(\ilabs_{\nftmMark}\) is also an
embedding.

\paragraph{\Cref{task:coherence}}

\subparagraph{Case \(\ilFun_{\nftyMark}(\ty_{\nftyMark}, \tyI_{\nftyMark})\)}

Let \(\ty : \workingTyTmStr\),
\(\ty_{\nftyMark} : \ilIsNfTy_{\workingNfalg}(\ty)\),
\(\tyI : \ty \to \workingTyTmStr\), and
\(\tyI_{\nftyMark} : \ilForall_{\var}\ilIsVar(\var) \to
\ilIsNfTy_{\workingNfalg}(\ty)\), and suppose induction hypotheses
\begin{align*}
  \begin{autobreak}
    \MoveEqLeft
    \ilIsContr(\opMode_{\modeRen} \ilLoop(\ty, \ty_{\nftyMark}))
  \end{autobreak}
  \\
  \begin{autobreak}
    \MoveEqLeft
    \ilForall_{\var}
    \ilForall_{\var_{\varMark}}
    \ilIsContr(\opMode_{\modeRen} \ilLoop(\tyI(\var), \tyI_{\nftyMark}(\var, \var_{\varMark}))).
  \end{autobreak}
\end{align*}
We have
\begin{EqReasoning}
  \begin{align*}
    & \term{\opMode_{\modeRen} \ilLoop(\ilFun(\ty, \tyI), \ilFun_{\nftyMark}(\ty_{\nftyMark}, \tyI_{\nftyMark}))} \\
    \simeq & \by{\(\ilFun_{\nftyMark}\) is a \(\modeRen\)-embedding} \\
    & \term{\opMode_{\modeRen} \ilLoop(\ty, \ty_{\nftyMark}, \tyI, \tyI_{\nftyMark})} \\
    \simeq & \by{induction hypothesis for \(\ty_{\nftyMark}\)} \\
    & \term{\opMode_{\modeRen} \ilLoop(\tyI, \tyI_{\nftyMark})} \\
    \simeq & \by{\(\workingTyTmStr \simeq (\ilIsVar(\var) \to \workingTyTmStr)\) as \(\modeRen \le {}^{\orthMark}\modeSyn\)} \\
    & \term{\opMode_{\modeRen} \ilLoop(\ilAbs \var \var_{\varMark}. \tyI(\var), \tyI_{\nftyMark})} \\
    \simeq & \by{function extensionality} \\
    & \term{\opMode_{\modeRen}(\ilForall_{\var}\ilForall_{\var_{\varMark}}\ilLoop(\tyI(\var), \tyI_{\nftyMark}(\var, \var_{\varMark})))} \\
    \simeq & \by{\cref{axm:working-logos-3-variable-tiny}} \\
    & \term{\ilForall_{\var}\ilForall_{\var_{\varMark}}\opMode_{\modeRen} \ilLoop(\tyI(\var), \tyI_{\nftyMark}(\var, \var_{\varMark}))},
  \end{align*}
\end{EqReasoning}
and the last type is contractible by the induction hypothesis for
\(\tyI_{\nftyMark}\).

\subparagraph{Case \(\ilabs_{\nftmMark}(\tmI_{\nftmMark})\)}

Let \(\ty : \workingTyTmStr\), \(\tyI : \ty \to \workingTyTmStr\),
\(\tmI : \ilForall_{\tm}\tyI(\tm)\), and
\(\tmI_{\nftmMark} : \ilForall_{\var}\ilIsVar(\var) \to
\ilIsNfTm_{\workingNfalg}(\tmI(\var))\), and suppose induction
hypothesis
\begin{align*}
  \begin{autobreak}
    \MoveEqLeft
    \ilForall_{\var}
    \ilForall_{\var_{\varMark}}
    \ilIsContr(\opMode_{\modeRen} \ilLoop(\tmI(\var), \tmI_{\nftmMark}(\var, \var_{\varMark}))).
  \end{autobreak}
\end{align*}
We have
\begin{EqReasoning}
  \begin{align*}
    & \term{\opMode_{\modeRen} \ilLoop(\ilabs(\tmI), \ilabs_{\nftmMark}(\tmI_{\nftmMark}))} \\
    \simeq & \by{\(\ilabs_{\nftmMark}\) is a \(\modeRen\)-embedding} \\
    & \term{\opMode_{\modeRen} \ilLoop(\tmI, \tmI_{\nftmMark})} \\
    \simeq & \by{\cref{axm:working-logos-3-variable-tiny} and \(\modeRen \le {}^{\orthMark}\modeSyn\)} \\
    & \term{\ilForall_{\var}\ilForall_{\var_{\varMark}}\opMode_{\modeRen} \ilLoop(\tmI(\var), \tmI_{\nftmMark}(\var, \var_{\varMark}))},
  \end{align*}
\end{EqReasoning}
and the last type is contractible by the induction hypothesis.

\subparagraph{Case \(\ilapp_{\netmMark}(\map_{\netmMark}, \tm_{\nftmMark})\)}

Let \(\ty : \workingTyTmStr\), \(\tyI : \ty \to \workingTyTmStr\),
\(\map : \ilFun(\ty, \tyI)\),
\(\map_{\netmMark} : \ilIsNeTm_{\workingNfalg}(\map)\), \(\tm : \ty\),
and \(\tm_{\nftmMark} : \ilIsNfTm_{\workingNfalg}(\tm)\), and suppose
induction hypotheses
\begin{align*}
  \begin{autobreak}
    \MoveEqLeft
    \ilIsContr(\opMode_{\modeRen} \ilLoop(\ilFun(\ty, \tyI), \map, \map_{\netmMark}))
  \end{autobreak}
  \\
  \begin{autobreak}
    \MoveEqLeft
    \ilIsContr(\opMode_{\modeRen} \ilLoop(\tm, \tm_{\nftmMark})).
  \end{autobreak}
\end{align*}
We have
\begin{EqReasoning}
  \begin{align*}
    & \term{\opMode_{\modeRen} \ilLoop(\tyI(\tm), \ilapp(\map, \tm), \ilapp_{\netmMark}(\map_{\netmMark}, \tm_{\nftmMark}))} \\
    \simeq & \by{\(\ilapp_{\netmMark}\) is a \(\modeRen\)-embedding} \\
    & \term{\opMode_{\modeRen} \ilLoop(\ty, \tyI, \map, \map_{\netmMark}, \tm, \tm_{\nftmMark})} \\
    \simeq & \by{\(\ilFun\) is a \(\modeRen\)-embedding} \\
    & \term{\opMode_{\modeRen} \ilLoop(\ilFun(\ty, \tyI), \map, \map_{\netmMark}, \tm, \tm_{\nftmMark})},
  \end{align*}
\end{EqReasoning}
and the last type is contractible by the induction hypotheses.

\subparagraph{Cases \(\ilNat_{\nftyMark}\), \(\ilzero_{\nftmMark}\), and \(\ilsucc_{\nftmMark}(\nat_{\nftmMark})\)}

These cases require no new idea.

\subparagraph{Case \(\ilNeToNf_{\ilNat}(\nat_{\netmMark})\)}

Let \(\nat : \ilNat\) and
\(\nat_{\netmMark} : \ilIsNeTm_{\workingNfalg}(\nat)\), and suppose
induction hypothesis
\begin{align*}
  \begin{autobreak}
    \MoveEqLeft
    \ilIsContr(\opMode_{\modeRen} \ilLoop(\ilNat, \nat, \nat_{\netmMark})).
  \end{autobreak}
\end{align*}
We have
\begin{EqReasoning}
  \begin{align*}
    & \term{\opMode_{\modeRen} \ilLoop(\nat, \ilNeToNf_{\ilNat}(\nat_{\netmMark}))} \\
    \simeq & \by{\(\ilNat\) is a \(\modeRen\)-embedding} \\
    & \term{\opMode_{\modeRen} \ilLoop(\ilNat, \nat, \ilNeToNf_{\ilNat}(\nat_{\netmMark}))} \\
    \simeq & \by{\(\ilNeToNf_{\ilNat}\) is a \(\modeRen\)-embedding} \\
    & \term{\opMode_{\modeRen} \ilLoop(\nat, \nat_{\netmMark})} \\
    \simeq & \by{\(\ilNat\) is a \(\modeRen\)-embedding} \\
    & \term{\opMode_{\modeRen} \ilLoop(\ilNat, \nat, \nat_{\netmMark})},
  \end{align*}
\end{EqReasoning}
and the last type is contractible by the induction hypothesis.

\subparagraph{Case \(\ilind_{\ilNat, \netmMark}(\nat_{\netmMark}, \ty_{\nftmMark}, \tm_{\zeroMark, \nftmMark}, \tm_{\succMark, \nftmMark})\)}

Let \(\nat : \ilNat\),
\(\nat_{\netmMark} : \ilIsNeTm_{\workingNfalg}(\nat)\),
\(\ty : \ilNat \to \workingTyTmStr\),
\(\ty_{\nftyMark} : \ilForall_{\var}\ilIsVar(\var) \to
\ilIsNfTy_{\workingNfalg}(\ty(\var))\),
\(\tm_{\zeroMark} : \ty(\ilzero)\),
\(\tm_{\zeroMark, \nftmMark} :
\ilIsNfTm_{\workingNfalg}(\tm_{\zeroMark})\),
\(\tm_{\succMark} : \ilForall_{\var}\ty(\var) \to \ty(\ilsucc(\var))\),
and
\(\tm_{\succMark, \netmMark} : \ilForall_{\var}\ilIsVar(\var) \to
\ilForall_{\varI}\ilIsVar(\varI) \to
\ilIsNfTm_{\workingNfalg}(\tm_{\succMark}(\var, \varI))\), and suppose
induction hypotheses
\begin{align*}
  \begin{autobreak}
    \MoveEqLeft
    \ilIsContr(\opMode_{\modeRen} \ilLoop(\ilNat, \nat, \nat_{\netmMark}))
  \end{autobreak}
  \\
  \begin{autobreak}
    \MoveEqLeft
    \ilForall_{\var}
    \ilForall_{\var_{\varMark}}
    \ilIsContr(\opMode_{\modeRen} \ilLoop(\ty(\var), \ty_{\nftmMark}(\var, \var_{\varMark})))
  \end{autobreak}
  \\
  \begin{autobreak}
    \MoveEqLeft
    \ilIsContr(\opMode_{\modeRen} \ilLoop(\tm_{\zeroMark}, \tm_{\zeroMark, \nftmMark}))
  \end{autobreak}
  \\
  \begin{autobreak}
    \MoveEqLeft
    \ilForall_{\var}
    \ilForall_{\var_{\varMark}}
    \ilForall_{\varI}
    \ilForall_{\varI_{\varMark}}
    \ilIsContr(\opMode_{\modeRen} \ilLoop(\tm_{\succMark}(\var, \varI), \tm_{\succMark, \nftmMark}(\var, \var_{\varMark}, \varI, \varI_{\varMark}))).
  \end{autobreak}
\end{align*}
We have
\begin{EqReasoning}
  \begin{align*}
    & \term{\opMode_{\modeRen} \ilLoop(\ty(\nat), \ilind_{\ilNat}(\nat, \ty, \tm_{\zeroMark}, \tm_{\succMark}), \ilind_{\ilNat, \netmMark}(\nat_{\netmMark}, \ty_{\nftyMark}, \tm_{\zeroMark, \nftmMark}, \tm_{\succMark, \nftmMark}))} \\
    \simeq & \by{\(\ilind_{\ilNat, \netmMark}\) is a \(\modeRen\)-embedding} \\
    & \term{\opMode_{\modeRen} \ilLoop(\nat, \nat_{\netmMark}, \ty, \ty_{\nftyMark}, \tm_{\zeroMark}, \tm_{\zeroMark, \nftmMark}, \tm_{\succMark}, \tm_{\succMark, \nftmMark})} \\
    \simeq & \by{\(\ilNat\) is a \(\modeRen\)-embedding} \\
    & \term{\opMode_{\modeRen} \ilLoop(\ilNat, \nat, \nat_{\netmMark}, \ty, \ty_{\nftyMark}, \tm_{\zeroMark}, \tm_{\zeroMark, \nftmMark}, \tm_{\succMark}, \tm_{\succMark, \nftmMark})},
  \end{align*}
\end{EqReasoning}
and the last type is proved to be contractible from the induction
hypotheses, using \cref{axm:working-logos-3-variable-tiny} to handle
variables.

\subparagraph{Cases \(\ilConstTy_{\nftyMark}\), \(\ilNeToNf_{\ilConstTy}(\tmII_{\netmMark})\), \(\ilConstTyI_{\nftyMark}(\tmII_{\nftmMark})\), and \(\ilConstTm_{\netmMark}(\tmII_{\nftmMark})\)}

Straightforward.

\subparagraph{Case \(\ilNeToNf_{\ilConstTyI}(\tmII_{\nftmMark}, \tmIII_{\netmMark})\)}

Let \(\tmII : \ilConstTy\),
\(\tmII_{\nftmMark} : \ilIsNfTm_{\workingNfalg}(\tmII)\),
\(\tmIII : \ilConstTyI(\tmII)\), and
\(\tmIII_{\netmMark} : \ilIsNeTm_{\workingNfalg}(\tmIII)\), and
suppose induction hypotheses
\begin{align*}
  \begin{autobreak}
    \MoveEqLeft
    \ilIsContr(\opMode_{\modeRen} \ilLoop(\tmII, \tmII_{\nftmMark}))
  \end{autobreak}
  \\
  \begin{autobreak}
    \MoveEqLeft
    \ilIsContr(\opMode_{\modeRen} \ilLoop(\ilConstTyI(\tmII), \tmIII, \tmIII_{\netmMark})).
  \end{autobreak}
\end{align*}
By the case of \(\ilConstTyI_{\nftyMark}(\tmII_{\nftmMark})\), the
type
\(\opMode_{\modeRen} \ilLoop(\ilConstTyI(\tmII),
\ilConstTyI_{\nftyMark}(\tmII_{\nftmMark}))\) is contractible. By
\cref{axm:working-logos-3-results-from-working-logos-2},
\(\opMode_{\modeRen} \ilLoop(\ilConstTyI(\tmII))\) is also
contractible. Then
\(\opMode_{\modeRen} \ilLoop(\tmIII, \tmIII_{\netmMark})\) is
contractible by the induction hypothesis for
\(\tmIII_{\netmMark}\). Then,
\begin{EqReasoning}
  \begin{align*}
    & \term{\opMode_{\modeRen} \ilLoop(\tmIII, \ilNeToNf_{\ilConstTyI}(\tmII_{\nftmMark}, \tmIII_{\netmMark}))} \\
    \simeq & \by{\(\opMode_{\modeRen}(\ilLoop(\ilConstTyI(\tmII)))\) is contractible} \\
    & \term{\opMode_{\modeRen} \ilLoop(\ilConstTyI(\tmII), \tmIII, \ilNeToNf_{\ilConstTyI}(\tmII_{\nftmMark}, \tmIII_{\netmMark}))} \\
    \simeq & \by{\(\ilNeToNf_{\ilConstTyI}\) is a \(\modeRen\)-embedding} \\
    & \term{\opMode_{\modeRen} \ilLoop(\tmII, \tmII_{\nftmMark}, \tmIII, \tmIII_{\netmMark}),}
  \end{align*}
\end{EqReasoning}
and the last type is contractible since
\(\opMode_{\modeRen} \ilLoop(\tmII, \tmII_{\nftmMark})\) and
\(\opMode_{\modeRen} \ilLoop(\tmIII, \tmIII_{\netmMark})\) are.

\section*{Acknowledgements}
\label{sec:acknowledgements}

The author is grateful to Peter LeFanu Lumsdaine, Jonathan Sterling,
Daniel Gratzer, Christian Sattler, and Marcelo Fiore for useful
discussions. The author is supported by KAW Grant ``Type Theory for
Mathematics and Computer Science''.

\printbibliography

\appendix

\section{Proof of Proposition \ref{prop:relative-induction-trans}}
\label{sec:proof-relative-induction}

We first give a useful construction of representable maps of right
fibrations.

\begin{construction}
  \label{lem:rep-map-equiv-comprehension}
  Let \(\cat\) be an \((\infty, 1)\)-category and \(\sh\) a right fibration over
  \(\cat\). Let \(\Rep_{\sh}\) denote the full subcategory of
  \((\RFib_{\cat})_{\slice \sh}\) spanned by the representable maps
  with codomain \(\sh\). We have a functor
  \begin{equation*}
    (\RFib_{\cat})_{\slice \sh} \to
    \Fun_{\RFib_{\cat}}((\RFib_{\cat})_{\slice \sh}, (\RFib_{\cat})^{\to})
  \end{equation*}
  that sends \(\shI \to \sh\) to the functor mapping
  \(\map : \sh' \to \sh\) to base change
  \(\map^{\pbMark} \shI \to \sh'\). By construction, it factors through
  the full subcategory
  \(\Cart_{\RFib_{\cat}}((\RFib_{\cat})_{\slice \sh}\comma
  (\RFib_{\cat})^{\to})\) consisting of those functors sending all
  morphisms in \((\RFib_{\cat})_{\slice \sh}\) to pullback squares. By
  restricting along the Yoneda embedding, we have a functor
  \begin{equation*}
    (\RFib_{\cat})_{\slice \sh} \to
    \Cart_{\cat}(\sh, \yoneda^{\pbMark} (\RFib_{\cat})^{\to}).
  \end{equation*}
  This functor is an equivalence by Yoneda. By the definition of
  representable maps, the restriction to \(\Rep_{\sh}\) induces an
  equivalence
  \begin{equation*}
    \Rep_{\sh} \simeq \Cart_{\cat}(\sh, \cat^{\to}).
  \end{equation*}
  For a representable map \(\map : \shI \to \sh\), the corresponding
  functor \(\comprProj_{\map} : \sh \to \cat^{\to}\) sends a section
  \(\el : \yoneda \ctx \to \sh\) to the morphism
  \(\comprProj_{\map}(\tm) : \compr{\el}_{\map} \to \ctx\).
\end{construction}

\begin{remark}
  The latter structure
  \(\comprProj \in \Cart_{\cat}(\sh, \cat^{\to})\) is an
  \(\infty\)-analog of the structure of \emph{comprehension category}
  \parencite{jacobs1993comprehension} and \emph{category with
    attributes} \parencite{cartmell1978generalised}.
\end{remark}

\begin{lemma}
  \label{lem:rep-map-equiv-comprehension-morphism-part}
  Let \(\model\) and \(\modelI\) be models of \(\basicTT\), let
  \(\fun_{\Ctx} : \Ctx(\model) \to \Ctx(\modelI)\) be a functor
  preserving terminal objects, and let
  \(\fun_{\Ty} : \Ty(\model) \to \Ty(\modelI)\) be a functor over
  \(\fun_{\Ctx}\). Then the space of extensions of
  \((\fun_{\Ctx}, \fun_{\Ty})\) to a morphism of models of
  \(\basicTT\) is equivalent to the space of homotopies
  \begin{equation*}
    \begin{tikzcd}
      \Ty(\model)
      \arrow[r, "\fun_{\Ty}"]
      \arrow[d, "\comprProj_{\typeof(\model)}"']
      \arrow[dr, phantom, "\simeq"{description}] &
      \Ty(\modelI)
      \arrow[d, "\comprProj_{\typeof(\modelI)}"] \\
      \Ctx(\model)^{\to}
      \arrow[r, "\fun_{\Ctx}^{\to}"'] &
      \Ctx(\modelI)^{\to}
    \end{tikzcd}
  \end{equation*}
  over \(\Ctx(\modelI)\).
\end{lemma}
\begin{proof}
  Apply \cref{lem:rep-map-equiv-comprehension} to the \(\infty\)-logos
  \(\Fun(\neord[1], \RFib)_{\fun_{\Ctx}}\) instead of
  \(\RFib_{\cat}\).
\end{proof}

\Cref{prop:relative-induction-trans} is proved by constructing the
following \emph{inserter model}.

\begin{construction}
  \label{cst:inserter-model}
  Let \(\model_{0}\) be a model of \(\basicTT\), and let
  \(\funProj : \model \to \model_{0}\) and
  \(\funProjI : \modelI \to \model_{0}\) be morphisms of models of
  \(\basicTT\). Let \(\funI : \model \to \modelI\) be a morphism over
  \(\model_{0}\) and \(\fun : \Ctx_{\model} \to \Ctx_{\modelI}\) a
  functor over \(\Ctx_{\model_{0}}\). Suppose that we have a map
  \(\vartotm : \fun_{\Ctx}^{\pbMark} \Ty(\modelI)
  \times_{\funProj_{\Ctx}^{\pbMark} \Ty(\model_{0})} \Tm(\model) \to
  \fun_{\Ctx}^{\pbMark} \Tm(\modelI)\) in \(\RFib_{\Ctx(\model)}\)
  over \(\fun_{\Ctx}^{\pbMark} \Ty(\modelI)\). We construct a model
  \(\Ins_{\model_{0}}(\fun, \funI, \vartotm)\) of \(\basicTT\).

  The base \((\infty, 1)\)-category
  \(\Ctx(\Ins_{\model_{0}}(\fun, \funI, \vartotm))\) is the inserter
  from \(\fun_{\Ctx}\) to \(\funI_{\Ctx}\) over \(\model_{0}\), that
  is, the \((\infty, 1)\)-category of pairs \((\ctx, \map)\) consisting of an
  object \(\ctx \in \Ctx(\model)\) and a morphism
  \(\map : \fun_{\Ctx}(\ctx) \to \funI_{\Ctx}(\ctx)\) over
  \(\id_{\funProj_{\Ctx}(\ctx)}\). We have the forgetful functor
  \(\insproj_{\Ctx} : \Ctx(\Ins_{\model_{0}}(\fun, \funI, \vartotm)) \to
  \Ctx(\model)\). Note that \(\insproj_{\Ctx}\) creates all limits
  preserved by \(\funI_{\Ctx}\). In particular, it creates terminal
  objects. Let
  \(\Ty(\Ins_{\model_{0}}(\fun, \funI, \vartotm)) =
  \insproj_{\Ctx}^{\pbMark} \Ty(\model)\). We construct a lift
  \begin{equation*}
    \begin{tikzcd}
      \Ty(\Ins_{\model_{0}}(\fun, \funI, \vartotm))
      \arrow[r, "\insproj_{\Ty}"]
      \arrow[d, dotted, "\comprProj_{\Ins_{\model_{0}}(\fun, \funI, \vartotm)}"'] &
      \Ty(\model)
      \arrow[d, "\comprProj_{\model}"] \\
      \Ctx(\Ins_{\model_{0}}(\fun, \funI, \vartotm))^{\to}
      \arrow[r, "\insproj_{\Ctx}^{\to}"'] &
      \Ctx(\model)^{\to}
    \end{tikzcd}
  \end{equation*}
  over \(\Ctx(\model)\). By limit creation,
  \(\comprProj_{\Ins_{\model_{0}}(\fun, \funI, \vartotm)}\)
  automatically sends all morphisms to pullback squares. Then, by
  \cref{lem:rep-map-equiv-comprehension},
  \(\Ctx(\Ins_{\model_{0}}(\fun, \funI, \vartotm))\) and
  \(\Ty(\Ins_{\model_{0}}(\fun, \funI, \var))\) are part of a model
  \(\Ins_{\model_{0}}(\fun, \funI, \vartotm)\)of \(\basicTT\), and, by
  \cref{lem:rep-map-equiv-comprehension-morphism-part},
  \(\insproj_{\Ctx}\) and \(\insproj_{\Ty}\) are part of a morphism
  \(\insproj : \Ins_{\model_{0}}(\fun, \funI, \vartotm) \to \model\).

  Assume that an object in
  \(\Ty(\Ins_{\model_{0}}(\fun, \funI, \vartotm))\) is given. By
  definition, it consists of an object \(\ctx \in \Ctx(\model)\), a
  morphism \(\map : \fun_{\Ctx}(\ctx) \to \funI_{\Ctx}(\ctx)\) over
  \(\funProj_{\Ctx}(\ctx)\), and a section
  \(\ty : \yoneda \ctx \to \Ty(\model)\). We have to find a lift
  \begin{equation*}
    \begin{tikzcd}
      \fun_{\Ctx}(\compr{\ty})
      \arrow[r, dotted]
      \arrow[d, "\fun(\comprProj(\ty))"'] &
      \funI_{\Ctx}(\compr{\ty})
      \arrow[d, "\funI(\comprProj(\ty))"] \\
      \fun_{\Ctx}(\ctx)
      \arrow[r, "\map"'] &
      \funI_{\Ctx}(\ctx).
    \end{tikzcd}
  \end{equation*}
  over \(\funProj_{\Ctx}(\compr{\ty})\). Since \(\funI\) is a morphism
  of models of \(\basicTT\), we have
  \(\yoneda (\funI_{\Ctx}(\compr{\ty})) \simeq \yoneda (\funI_{\Ctx}(\ctx))
  \times_{\Ty(\modelI)} \Tm(\modelI)\). Thus, it suffices to find a lift
  \begin{equation*}
    \begin{tikzcd}
      \yoneda (\fun_{\Ctx}(\compr{\ty}))
      \arrow[rr, dotted]
      \arrow[d, "\yoneda (\fun(\comprProj(\ty)))"'] & &
      [2ex]
      \Tm(\modelI)
      \arrow[d, "\typeof(\modelI)"] \\
      \yoneda (\fun_{\Ctx}(\ctx))
      \arrow[r, "\yoneda \map"'] &
      \yoneda (\funI_{\Ctx}(\ctx))
      \arrow[r, "\funI_{\Ty}(\ty)"'] &
      \Ty(\modelI).
    \end{tikzcd}
  \end{equation*}
  By the adjunction
  \((\fun_{\Ctx})_{\poMark} \adj \fun_{\Ctx}^{\pbMark}\), such a lift
  corresponds to a lift
  \begin{equation*}
    \begin{tikzcd}
      \yoneda \compr{\ty}
      \arrow[r, dotted]
      \arrow[d, "\yoneda (\comprProj(\ty))"'] &
      \fun_{\Ctx}^{\pbMark} \Tm(\modelI)
      \arrow[d, "\fun_{\Ctx}^{\pbMark} \typeof(\modelI)"] \\
      \yoneda \ctx
      \arrow[r, "\ty'"'] &
      \fun_{\Ctx}^{\pbMark} \Ty(\modelI),
    \end{tikzcd}
  \end{equation*}
  where \(\ty'\) is the map corresponding to
  \(\funI_{\Ty}(\ty) \comp \yoneda \map\). The lift is then given by
  \begin{align*}
    \begin{autobreak}
      \yoneda \compr{\ty}
      \xrightarrow{(\ty' \comp \yoneda (\comprProj(\ty)), \comprProjI(\ty))} \fun^{\pbMark} \Ty(\modelI) \times_{\funProj_{\Ctx}^{\pbMark} \Ty(\model_{0})} \Tm(\model)
      \xrightarrow{\vartotm} \fun_{\Ctx}^{\pbMark} \Tm(\modelI).
    \end{autobreak}
  \end{align*}
  Notice that all the steps are functorial, so we obtain a functor
  \begin{math}
    \comprProj_{\Ins_{\model_{0}}(\fun, \funI, \vartotm)} :
    \Ty(\Ins_{\model_{0}}(\fun, \funI, \var))
    \to \Ctx(\Ins_{\model_{0}}(\fun, \funI, \var))^{\to}.
  \end{math}
\end{construction}

\begin{proof}[Proof of \cref{prop:relative-induction-trans}]
  By construction, the forgetful morphism
  \(\Ins_{\workingInitModel}(\workingRenIncNm, \workingSection \comp
  \workingRenCounit, \workingVarToTm) \to \workingRenModel\) is a
  renaming model over \(\workingRenModel\) and thus admits a section
  by the initiality of \(\workingRenModel\). The \(\Ctx\)-component of
  the section corresponds to a natural transformation
  \(\workingTrans : \workingRenIncNm \To \workingSection_{\Ctx} \comp
  \workingRenCounit_{\Ctx}\). The required equivalence holds by
  construction.
\end{proof}

\section{Proof of Proposition \ref{prop:renaming-model-tm-discrete}}
\label{sec:proof-proposition}

The idea of the proof of \cref{prop:renaming-model-tm-discrete} is to
construct a renaming model \(\modelI \to \Ren_{\model}\) such that
\(\Tm(\modelI)\) is \(0\)-truncated. A retract argument then shows
that \(\Tm(\Ren_{\model})\) is also \(0\)-truncated.

\begin{construction}
  \label{cst:renaming-model-alt}
  Let \(\model\) be a model of \(\basicTT\). We construct a renaming
  model \(\RenCounitAlt : \RenAlt_{\model} \to \model\) as follows.

  We define \(\Ctx(\RenAlt_{\model})_{0}\) to be the space of
  finite sequences \((\ty_{1}, \dots, \ty_{\nat})\) of maps
  \(\ty_{\idx} : \yoneda \compr{\ty_{\idx - 1}} \to \Ty(\model)\) of
  right fibrations over \(\Ctx(\model)\), where
  \(\compr{\ty_{0}} = \objFinal\). We define a map
  \(\RenCounitAlt_{\Ctx} : \Ctx(\RenAlt_{\model})_{0} \to \Ctx(\model)\)
  by
  \begin{math}
    \RenCounitAlt_{\Ctx}(\ty_{1}, \dots, \ty_{\nat}) =
    \compr{\ty_{\nat}}.
  \end{math}
  For an element
  \(\ctx = (\ty_{1}, \dots, \ty_{\nat}) \in
  \Ctx(\RenAlt_{\model})_{0}\), we define a set
  \(\Tm(\RenAlt_{\model})_{\ctx} = \{1, \dots, \nat\}\). We also
  define a map
  \(\RenCounitAlt_{\Tm, \ctx} : \Tm(\RenAlt_{\model})_{\ctx} \to
  \Tm(\model)_{\RenCounitAlt_{\Ctx}(\ctx)}\) by sending
  \(\idx \in \Tm(\RenAlt_{\model})_{\ctx}\) to the composite of the
  projections
  \(\comprProj^{\nat}_{\idx}(\ty_{\nat}) : \yoneda \compr{\ty_{\nat}}
  \xrightarrow{\comprProj(\ty_{\nat})} \dots
  \xrightarrow{\comprProj(\ty_{\idx + 1})} \yoneda
  \compr{\ty_{\idx}}\) and
  \(\comprProjI(\ty_{\idx}) : \yoneda \compr{\ty_{\idx}} \to
  \Tm(\model)\). Let \(\catTmp\) be the \((\infty, 1)\)-category defined by the
  pullback
  \begin{equation*}
    \begin{tikzcd}
      \catTmp^{\opMark}
      \arrow[r, dotted]
      \arrow[d, dotted]
      \arrow[dr, pbMark] &
      [4ex]
      \Space^{\to}
      \arrow[d, "\cod"] \\
      \Ctx(\model)^{\opMark}
      \arrow[r, "\Tm(\model)"'] &
      \Space.
    \end{tikzcd}
  \end{equation*}
  The assignment \(\ctx \mapsto \RenCounitAlt_{\Tm, \ctx}\) determines a map
  \(\RenCounitAlt'_{\Tm} : \Ctx(\RenAlt_{\model})_{0} \to \catTmp\)
  whose projection to \(\Ctx(\model)\) is
  \(\RenCounitAlt_{\Ctx}\). Factor \(\RenCounitAlt'_{\Tm}\) as a
  surjective-on-objects functor
  \(\Ctx(\RenAlt_{\model})_{0} \to \Ctx(\RenAlt_{\model})\) followed by
  a fully faithful functor \(\Ctx(\RenAlt_{\model}) \to \catTmp\). The
  composite \(\Ctx(\RenAlt_{\model}) \to \catTmp \to \Ctx(\model)\)
  extends \(\RenCounitAlt_{\Ctx}\). The composite
  \begin{math}
    \Ctx(\RenAlt_{\model})^{\opMark}
    \to \catTmp^{\opMark}
    \to \Space^{\to}
    \xrightarrow{\dom} \Space
  \end{math}
  extends \(\Tm(\RenAlt_{\model})\) and determines a right fibration
  over \(\Ctx(\RenAlt_{\model})\). Then \(\RenCounitAlt_{\Tm, \ctx}\)
  is extended to a map of right fibrations
  \(\RenCounitAlt_{\Tm} : \Tm(\RenAlt_{\model}) \to \Tm(\model)\) over
  \(\RenCounitAlt_{\Ctx}\). Let
  \(\Ty(\RenAlt_{\model}) = \RenCounitAlt_{\Ctx}^{\pbMark}
  \Ty(\model)\) and let
  \(\RenCounitAlt_{\Ty} : \Ty(\RenAlt_{\model}) \to \Ty(\model)\) be the
  projection. \(\RenCounitAlt_{\Tm}\) induces a map
  \(\typeof(\RenAlt_{\model}) : \Tm(\RenAlt_{\model}) \to
  \Ty(\RenAlt_{\model})\) of right fibrations over
  \(\Ctx(\RenAlt_{\model})\).

  Let us describe \(\Ctx(\RenAlt_{\model})\) concretely. An object of
  \(\Ctx(\RenAlt_{\model})\) is a finite sequence
  \((\ty_{1}, \dots, \ty_{\nat})\) of maps
  \(\ty_{\idx} : \yoneda \compr{\ty_{\idx - 1}} \to \Ty(\model)\). The
  mapping space from \(\ctx\) to \(\ctxI\) is the space of triples
  \((\map, \idxmor, \pth)\) consisting of a morphism
  \(\map : \RenCounitAlt_{\Ctx}(\ctx) \to \RenCounitAlt_{\Ctx}(\ctxI)\)
  in \(\Ctx(\model)\), a map
  \(\idxmor : \Tm(\RenAlt_{\model})_{\ctxI} \to
  \Tm(\RenAlt_{\model})_{\ctx}\), and a homotopy \(\pth\) filling the
  following square.
  \begin{equation*}
    \begin{tikzcd}
      \Tm(\RenAlt_{\model})_{\ctxI}
      \arrow[r, "\idxmor"]
      \arrow[d, "\RenCounitAlt_{\Tm, \ctxI}"'] &
      \Tm(\RenAlt_{\model})_{\ctx}
      \arrow[d, "\RenCounitAlt_{\Tm, \ctx}"] \\
      \Tm(\model)_{\RenCounitAlt_{\Ctx}(\ctxI)}
      \arrow[r, "\map^{\pbMark}"'] &
      \Tm(\model)_{\RenCounitAlt_{\Ctx}(\ctx)}
    \end{tikzcd}
  \end{equation*}
  Suppose that \(\ctx = (\ty_{1}, \dots, \ty_{\nat})\) and
  \(\ctxI = (\tyI_{1}, \dots, \tyI_{\natI})\). Then \(\idxmor\) is a
  map \(\{1, \dots, \natI\} \to \{1, \dots, \nat\}\), and \(\pth\)
  assigns to each \(\idxI \in \{1, \dots, \natI\}\) a homotopy
  \(\pth_{\idxI}\) filling the following diagram.
  \begin{equation*}
    \begin{tikzcd}
      \yoneda \compr{\ty_{\nat}}
      \arrow[rr, "\comprProj^{\nat}_{\idxmor(\idxI)}(\ty_{\nat})"]
      \arrow[d, "\map"'] &
      [4ex] &
      [2ex]
      \yoneda \compr{\ty_{\idxmor(\idxI)}}
      \arrow[d, "\comprProjI(\ty_{\idxmor(\idxI)})"] \\
      \yoneda \compr{\tyI_{\natI}}
      \arrow[r, "\comprProj^{\natI}_{\idxI}(\tyI_{\natI})"'] &
      \yoneda \compr{\tyI_{\idxI}}
      \arrow[r, "\comprProjI(\tyI_{\idxI})"'] &
      \Tm(\model)
    \end{tikzcd}
  \end{equation*}

  We show that \(\RenAlt_{\model}\) is indeed a model of
  \(\basicTT\). The empty sequence is a final object in
  \(\Ctx(\RenAlt_{\model})\). For context extension, let
  \(\ctx = (\ty_{1}, \dots, \ty_{\nat}) \in \Ctx(\RenAlt_{\model})\) be
  an object and let
  \(\ty_{\nat + 1} : \yoneda \ctx \to \Ty(\RenAlt_{\model})\) be a
  section. By definition, \(\ty_{\nat + 1}\) is a section
  \(\yoneda \compr{\ty_{\nat}} \to \Ty(\model)\), and thus
  \((\ty_{1}, \dots, \ty_{\nat + 1})\) is an object in
  \(\Ctx(\RenAlt_{\model})\). The projection
  \(\comprProj(\ty_{\nat + 1}) : \compr{\ty_{\nat + 1}} \to
  \compr{\ty_{\nat}}\) is extended to the morphism
  \(\comprProj'(\ty_{\nat + 1}) = (\comprProj(\ty_{\nat + 1}), \lambda
  \idx. \idx, \lambda \idx. \id) : (\ty_{1}, \dots, \ty_{\nat + 1}) \to
  (\ty_{1}, \dots, \ty_{\nat})\) in \(\Ctx(\RenAlt_{\model})\). The
  element \(\nat + 1 \in \{1, \dots, \nat + 1\}\) determines a section
  \(\yoneda (\ty_{1}, \dots, \ty_{\nat + 1}) \to
  \Tm(\RenAlt_{\model})\), and the diagram
  \begin{equation}
    \label{eq:renaming-model-context-extension}
    \begin{tikzcd}
      \yoneda (\ty_{1}, \dots, \ty_{\nat + 1})
      \arrow[r, "\nat + 1"]
      \arrow[d, "\comprProj'(\ty_{\nat + 1})"'] &
      \Tm(\RenAlt_{\model})
      \arrow[d, "\typeof(\RenAlt_{\model})"] \\
      \yoneda (\ty_{1}, \dots, \ty_{\nat})
      \arrow[r, "\ty_{\nat + 1}"'] &
      \Ty(\RenAlt_{\model})
    \end{tikzcd}
  \end{equation}
  commutes. We show that \cref{eq:renaming-model-context-extension} is
  a pullback. Let
  \(\ctxI = (\tyI_{1}, \dots, \tyI_{\natI}) \in \Ctx(\RenAlt_{\model})\)
  and let \((\map, \idxmor, \pth) : \ctxI \to \ctx\) be a morphism. A
  morphism \(\ctxI \to (\ty_{1}, \dots, \ty_{\nat + 1})\) over
  \(\ctx\) consists of a morphism
  \(\mapI : \compr{\tyI_{\natI}} \to \compr{\ty_{\nat + 1}}\) over
  \(\compr{\ty_{\nat}}\), an element
  \(\idxI \in \{1, \dots, \natI\}\), and a homotopy
  \(\pthI : \comprProjI(\tyI_{\idxI}) \comp
  \comprProj^{\natI}_{\idxI}(\tyI_{\natI}) \simeq \comprProjI(\ty_{\nat +
    1}) \comp \mapI\). On the other hand, a section
  \(\yoneda \ctxI \to \Tm(\RenAlt_{\model})\) over
  \(\ty_{\nat + 1} \comp (\map, \idxmor, \pth)\) consists of an
  element \(\idxI \in \{1, \dots, \natI\}\) and a homotopy
  \(\pthII : \typeof(\model) \comp \comprProjI(\tyI_{\idxI}) \comp
  \comprProj^{\natI}_{\idxI}(\tyI_{\natI}) \simeq \ty_{\nat + 1} \comp
  \map\). The canonical map
  \(\yoneda (\ty_{1}, \dots, \ty_{\nat + 1}) \to \ty_{\nat +
    1}^{\pbMark} \Tm(\RenAlt_{\model})\) takes
  \((\mapI, \idxI, \pthI)\) to \((\idxI, \pthII)\) where \(\pthII\) is
  defined by the following pasting diagram.
  \begin{equation*}
    \begin{tikzcd}
      \yoneda \compr{\tyI_{\natI}}
      \arrow[r, "\comprProj^{\natI}_{\idxI}(\tyI_{\natI})"]
      \arrow[d, "\mapI"']
      \arrow[dd, bend right = 13ex, "\map"']
      \arrow[dr, phantom, "\overset{\pthI}{\simeq}"] &
      [6ex]
      \yoneda \compr{\tyI_{\idxI}}
      \arrow[d, "\comprProjI(\tyI_{\idxI})"] \\
      [2ex]
      \yoneda \compr{\ty_{\nat + 1}}
      \arrow[r, "\comprProjI(\ty_{\nat + 1})"']
      \arrow[d, "\comprProj(\ty_{\nat + 1})"]
      \arrow[dr, pbMark] &
      \Tm(\model)
      \arrow[d, "\typeof(\model)"] \\
      [2ex]
      \yoneda \compr{\ty_{\nat}}
      \arrow[r, "\ty_{\nat + 1}"'] &
      \Ty(\model)
    \end{tikzcd}
  \end{equation*}
  Since the lower square is a pullback, we have
  \begin{math}
    \yoneda (\ty_{1}, \dots, \ty_{\nat + 1}) \simeq
    \ty_{\nat + 1}^{\pbMark}\Tm(\RenAlt_{\model}).
  \end{math}

  By the construction of context extension, \(\RenCounitAlt_{\Ctx}\),
  \(\RenCounitAlt_{\Ty}\), and \(\RenCounitAlt_{\Tm}\) determine a
  morphism \(\RenCounitAlt : \RenAlt_{\model} \to \model\) of models of
  \(\basicTT\). Since
  \(\Ty(\RenAlt_{\model}) \simeq \RenCounitAlt_{\Ctx}^{\pbMark}
  \Ty(\model)\) by definition,
  \(\RenCounitAlt : \RenAlt_{\model} \to \model\) is a renaming model.
\end{construction}

\begin{proof}[Proof of \cref{prop:renaming-model-tm-discrete}]
  Applying \cref{cst:renaming-model-alt} to \(\Ren_{\model}\), we have
  the renaming model \(\RenAlt_{\Ren_{\model}} \to \Ren_{\model}\). By
  the initiality of \(\Ren_{\model}\), it admits a section, and thus
  \(\Ren_{\model}\) is a retract of \(\RenAlt_{\Ren_{\model}}\). Then,
  since \(\Tm(\RenAlt_{\Ren_{\model}})\) is a \(0\)-truncated right
  fibration over \(\Ctx(\RenAlt_{\Ren_{\model}})\) by construction, it
  follows that \(\Tm(\Ren_{\model})\) is a \(0\)-truncated right
  fibration over \(\Ctx(\Ren_{\model})\).
\end{proof}

\begin{remark}
  It is natural to expect that
  \(\Ren_{\model} \simeq \RenAlt_{\model}\), but we do not try to prove
  this because this is not necessary for the coherence results.
\end{remark}

\end{document}